\documentclass[reqno]{amsart}
\numberwithin{equation}{section}
\usepackage{luatex85}
\usepackage{bbm}
\usepackage{Preamble}
\pdfoutput=1

\title{New examples of non-unique enhancements for triangulated categories}

\author{A. Rizzardo}

\author{J. Symons}

\author{M. Van den Bergh}

\thanks{
The second author holds a PhD fellowship from the Research Foundation - Flanders (FWO)}

\makeatletter
\@namedef{subjclassname@2020}{
  \textup{2020} Mathematics Subject Classification}
\makeatother

\subjclass[2020]{18G80, 18G70 (Primary), 16E45, 18G20, 18G25 (Secondary)}
\keywords{} 

\let\marginparold\marginpar
\def\marginpar#1{\marginparold{\raggedright \tiny #1}}
\begin{document}
\counterwithout{thm}{subsection} 
\counterwithin{thm}{section}
\setcounter{tocdepth}{2}

% ----------------------------------------------------------------------------------------
% ABSTRACT
\begin{abstract}
  We present a general procedure for constructing triangulated categories, linear over a field, with distinct enhancements. Some of our examples can be equipped with a (non-degenerate) $t$-structure, thereby showing that the existence of a $t$-structure does not imply uniqueness of enhancements, whether in the strong or weak sense (depending on the example).
\end{abstract}

\maketitle
\tableofcontents

% ----------------------------------------------------------------------------------------
% INTRODUCTION
\settocdepth{section}
\section{Introduction}

When working with triangulated categories, one typically considers
enhancements, which overcome many of their shortcomings. One may ask whether they exist and whether they are unique in an appropriate sense (see \zcref{sec:enhancements} for
precise definitions). It is known that
the derived category of a Grothendieck abelian category admits a unique dg-enhancement \cite{CSTour, CanoStelUnic}. On the other hand, there exist triangulated
categories (even linear over a field) for which uniqueness fails 
\cite{RVdBUnic}. The examples in \cite{RVdBUnic} are derived categories of graded fields and are, in particular, semi-simple. 

\ver

The main point of this paper is to generalise the construction in \cite{RVdBUnic} in such a way that the resulting triangulated categories are much less trivial. One motivation for this generalisation is the following: The examples in \cite{RVdBUnic} do not admit a non-degenerate $t$-structure (in contrast to derived categories), which may suggest that it is the presence of such a
$t$-structure that causes enhancements to be unique. The more general construction developed in the current paper, however, easily yields triangulated categories equipped with a non-degenerate $t$-structure that admit more than one enhancement.

\ver

To state our main results, we first need a definition. A (strictly unital) $k$-linear $A_\infty$-category is called \emph{$N$-formal} (for $N\ge 2$) if it 
has a minimal model with
 $m_3,\ldots,m_{N}=0$. 

\begin{thm - undirected}
    Let $\mc{A}$ be an $N$-formal $k$-linear $A_{\infty}$-category for $N \geq 3$. Assume furthermore that $H^\ast(\A)$ is right graded coherent
and that it has right finite  global dimension~$q$. Then there is an equivalence of triangulated categories
\begin{equation}   \tag{\ref{eq:body}}
\D_{\tn{fp}}(\mc{A}) \cong \D_{\tn{fp}}(H^{*}(\mc{A})) \quad \tn{ if } N \geq 8q+7,
\end{equation}
    where $\D_{\tn{fp}}(-)$ denotes the full triangulated subcategory of  the derived category $\D(-)$ (see below)
spanned by the objects~$X$ for which $H^{*}(X)$ is graded finitely presented over $H^\ast(\A)$; which moreover extends the identity isomorphism between the full subcategories of $\D_{\tn{fp}}(\A)$ and $\D_{\tn{fp}}(H^\ast(A))$ spanned by the shifts of the objects in $\A$ and $H^\ast(\A)$.
\end{thm - undirected}

This theorem is proved using the fact that $\A$ has \emph{perfect dimension} $q$. We introduce this notion in \zcref{section - perfect dimension}, together with a class of examples. Having perfect dimension~$q$ implies that $\D_{\tn{fp}}(\A)$ is equivalent to the homotopy category of the idempotent completion of the $q$-truncated twisted complexes, that is, to $H^{0}(\Split_{\infty}(\Tw_{\leq q} \A))$ (see \zcref{sec:idempotent_completion}), and similarly for $\D_{\tn{fp}}(H^{*}(\A))$. Combined with the fact that the triangulated structure of a pretriangulated $A_{\infty}$-category is determined by the $A_{\infty}$-operations $m_{1},m_{2}$ and $m_{3}$, via Massey products (see \zcref{appendix}), this yields \zcref{thm - undirected}.

\ver

By tweaking the hypotheses of \zcref{thm - undirected}, one may obtain better bounds in \zcref{eq:body}. For instance, when $H^\ast(\A)$ is `directed' in a suitable sense, one obtains the following result.
\begin{thm - directed}
  Let $\mc{A}$ be an $N$-formal $k$-linear $A_{\infty}$-category for $N \geq 3$ such that $H^\ast(\A)$
is directed  of length $l$ (see \zcref{def:directed} below). Then \zcref{thm - undirected} holds with the bound \zcref{eq:body} replaced by $N\ge l+3$.
\end{thm - directed}
If $\A$ in
\zcref{thm - undirected} or \zcref{thm - directed} is 
non-formal, it follows readily that the triangulated category
$\D_{\tn{fp}}(\A)$ admits two non-strongly equivalent (cf. \ref{sec:enhancements}) enhancements, namely
$\Split_{\infty}(\Tw_{\leq q} \A)$ and
$\Split_{\infty}(\Tw_{\leq q} H^{*}(\A))$. This result can be improved to non-equivalent  
enhancements in some cases, as we observe below.

\ver

In \zcref{sec - examples}, we provide two different methods for constructing $N$-formal $A_{\infty}$-categories satisfying the conditions of these theorems.
They both depend on the choice of a suitable field $K\supset k$.
\begin{enumerate}
  \item
In \zcref{subsec - ex1}, we let $\mf{g}_1$ be the arrow category consisting of two copies of $K$
glued via a $k$-$k$-bimodule $M$, which is a non-trivial extension of $K$ by $K[N]$, where $N \geq 4$. This example is directed.
\item In \zcref{subsec - ex2},  
  we let $\mf{g}_2$ be the graded polynomial algebra in one variable $K[t]$, where $t$ is of degree $1-N$ and $N \geq 7$ is odd\footnote{The attentive reader
    will notice that the bound $N\geq 7$ is better than the bound $N\geq 8q+7=15$ in \zcref{thm - undirected}. This is because we actually use the variant
\zcref{thm - undirected improved}, which has the bound $4q+3$.}, and endow it with a minimal $A_{\infty}$-structure $(0,m_{2},0,\ldots,0,m_{N+1},m_{N+2},\ldots)$ using a non-trivial Hochschild cocycle in $\tn{HH}^{N}_{k}(K)$. 
\end{enumerate}
With some additional arguments, we show that the two enhancements in these examples are not equivalent. By considering the standard $t$-structure and observing that the examples are cohomologically concentrated in nonpositive degrees, this illustrates that enhancements of a triangulated category with a non-degenerate $t$-structure need not be unique.

% ----------------------------------------------------------------------------------------
% CONVENTIONS
\section{Notation and conventions}
\label{sec:notation_and_conventions}
We fix a field $k$ of arbitrary characteristic. In \zcref{sec - examples}, we let $k$ be of characteristic zero. All categories and constructions are $k$-linear.  Unless otherwise specified modules are right modules.

\ver

\subsection{Gradings} \label{sec:gradings} A \emph{grading} will always be a $\mathbb{Z}$-grading. Elements of graded objects are always assumed to be homogeneous and the same is true for morphisms between graded objects. For a graded object $A^{\bullet} = (A_{i})_{i \in \mb{Z}}$ in an abelian category $\mathbb{A}$ we put $\Sigma A^{\bullet} := (A_{i+1})_{i \in \mb{Z}}$. The degree of a graded morphism $f:A^{\bullet} \rightarrow B^{\bullet}$ is denoted by $|f|$. We define $\Sigma f:\Sigma A^{\bullet} \rightarrow \Sigma B^{\bullet}$ via
$(\Sigma f)_i=(-1)^{|f|}f_{i+1}$. If the degree of a morphism is not explicitly specified then it is assumed to be zero. By $s^n_A:A^\bullet\rightarrow \Sigma^n A^\bullet$ (or simply $s^n$) we denote the morphism of degree $-n$  which is the identity on every component. Note that $s_{\Sigma A}=-\Sigma s_A$ and for $n>0$: $s^n_A=s_{\Sigma^n A}\cdots s_{\Sigma A}s_A$. Following \cite{Lefevre}, we also put $\omega_A^n:=s_A^{-n}:\Sigma^n A\rightarrow A$ (and we usually write $\omega^n:=\omega_{A}^n$).

The morphisms $s$ and $\omega$ are \emph{degree change} operators. If $V^{\bullet}$ 
is a graded object in $\tn{Vect}(k)$ and $x \in V$, then we denote its degree by $|x|$.
The element $sx\in \Sigma^\bullet V$ is the same as  $x$ but with degree $|sx|=|x|-1$.

\ver

\subsection{Triangles and triangulated categories} \label{sec:enrichment}
A \emph{triangle} in a category $\B$ enriched in graded vector spaces (such as $H^\ast(\A)$ for an $A_\infty$-category $\A$) is a sequence of
morphisms
\[
  A\rightarrow B\rightarrow C \xrightarrow{(1)} A.
\]
If $\B$ is equipped with a shift functor $\Sigma$ (e.g.\ if $\B$ is triangulated), then it is naturally equipped with
a graded enrichment
\[
\operatorname{Hom}^\ast(A,B)=(\operatorname{Hom}(A,\Sigma^n B))_n.
\]
In that case a triangle will also be written as
\[
  A\rightarrow B\rightarrow C \rightarrow \Sigma A. 
\]
\subsection{Dg-categories} We identify dg-algebras with dg-categories with a single object. Given two dg-categories $\A$ and $\B$, we denote the dg-category of dg-$\A$-$B$-bimodules by $\tn{Mod}(\A,\B)$ and its derived category by $\mc{D}(\A,\B)$.

\ver

\subsection{\texorpdfstring{$A_{\infty}$}{Ainfty}-categories} \label{subsec - Ainfty}
Likewise we identify $A_\infty$-algebras with $A_\infty$-categories with a single object. All $A_{\infty}$-categories are assumed to be strictly unital and we follow the notation and terminology of \cite{Lefevre}:

Given an $A_{\infty}$-category $\A$, we denote the dg-category of strictly unital right (so contravariant) $A_{\infty}-\A$-modules with strictly unital $A_\infty$-morphisms by $\mc{C}_{\infty}(\A)$ and we put $\tn{Mod}_{\infty}(\A) := Z^{0}(\mc{C}_{\infty}(\A))$. There is the Yoneda $A_{\infty}$-embedding $$\mc{Y}: \A \to \mc{C}_{\infty}(\A): X \mapsto \A(-,X).$$
We will often omit $\mc{Y}$ and identify $\A$ with its image.
 We write $\tn{Mod}_{\infty}(\A)^{\tn{strict}}$ for the subcategory of $\tn{Mod}_{\infty}(\A)$ with only the strict morphisms. There are (triangulated) equivalences by \cite[5.2.0.2 and 4.1.3.14]{Lefevre}:
 \[
   H^{0}(\mc{C}_{\infty}(\A)) \cong \tn{Mod}_{\infty}(\A)[\tn{Qis}^{-1}] \cong \tn{Mod}_{\infty}(\A)^{\tn{strict}}[\tn{Qis}^{-1}],
 \] where $\tn{Qis}$ denotes the class of quasi-isomorphisms. 
We will refer to any of these models as the \ti{derived category} of $\A$ and denote it  by $\mc{D}(\A)$.

Similarly, given another $A_{\infty}$-category $\B$, we use the notations $\C_\infty(\A, \B)$, $\tn{Mod}_{\infty}(\A,\B)$ and $\tn{Mod}_{\infty}(\A,\B)^{\tn{strict}}$,
$\mc{D}(\A,\B)$ for the analogously defined categories of $A_{\infty}-\A-\B$-bimodules.

By \cite[4.2.0.4 and 4.1.3.14]{Lefevre},
\begin{equation} \label{eq - Ainf dg bimod}
    \mc{D}(U(\A),U(\B)) \cong \mc{D}(\A,\B),
\end{equation}
where $U(\A)$ is the dg-category analogue to the dg-algebra $A'$ showing up in the proof of loc.cit. In fact, one can adapt the proof to use the right adjoint $U(-)$ of \cite[Proposition 2.1]{COS}, which associates a universal dg-category to a strictly unital $A_{\infty}$-category. If $\A$ is moreover a dg-category, then there is a map (the counit)
\begin{equation} \label{eq - dg qiso U}
    U(\A) \xrightarrow{\sim} \A
\end{equation}
that is a quasi-isomorphism, again by \cite[Proposition 2.1]{COS}.

\subsection{Enhancements}
\label{sec:enhancements} 
We follow \cite[Definition 2.1]{LuntsOrlov} except that we
place ourselves in the $A_\infty$-context throughout. This
is justified by the results in \cite{COS}. For an introduction
to the required $A_\infty$-notions, see \zcref{sec:preliminaries}.

An $A_\infty$-enhancement (or
simply enhancement) of a triangulated category $\mc{T}$ is a pair
$(\mc{C},E)$, where $\mc{C}$ is a pretriangulated $A_\infty$-category and
$E: H^{0}(\mc{C}) \to \mc{T}$ is an equivalence of triangulated
categories. Two enhancements $(\mc{C},E)$ and $(\mc{C}',E')$ are said to be \emph{equivalent}
if there is  an $A_\infty$-functor
$F: \mc{C} \to \mc{C}'$ such that $H^{0}(F)$ is an 
equivalence. If, in addition, we can choose $F$ in such a way
that there exists an isomorphism of exact functors
$E \cong E' \circ H^{0}(F)$, then we say that $(\mc{C},E)$ and $(\mc{C}',E')$
are \emph{strongly equivalent}.

As in \cite[Definition 2.2]{LuntsOrlov}, an enhancement $(\mc{C},E)$ is \emph{(strongly) unique} if it is (strongly) equivalent to any other enhancement.

We note that all triangulated categories that occur ``in nature'' come with a standard enhancement. E.g. if $\A$ is an $A_\infty$-category, then $\C_\infty(\A)$
provides a standard enhancement for $\D(\A)$. If $\A$ is a dg-category, then we can consider both a standard dg-enhancement of the derived category, like $\hproj(\A)$ or $\tn{semi-free}(\A)$, and $\C_\infty(\A)$. The two are strongly equivalent by \cite[Bottom of page 93]{Positselski}.

% ----------------------------------------------------------------------------------------
% 1 - PRELIMINARIES
\section{Preliminaries}
\label{sec:preliminaries}
\setcounter{subsection}{1}
\subsection{\texorpdfstring{$A_{\infty}$}{Ainfty}-categories}

We follow \cite{Lefevre} and \cite{RVdB}.

\begin{definition}
    An \textit{$A_{\infty}$-category} $\A$ is the data of 
    \begin{itemize}
        \item[-] a set of objects $\tn{Ob}(\A)$;
        \item[-] for every couple $(A,A')$ of objects of $\A$, a graded $k$-vector space $\A(A,A')$, called \ti{the Hom-space between $A$ and $A'$}. We call a homogeneous element of $\A(A,A')$ a \ti{morphism};
        \item[-] for every $i\geq 1$ and every sequence $(A_{0},\ldots,A_{i})$ of objects in $\A$, \ti{higher compositions} $$b_{i}: \Sigma \A(A_{i-1},A_{i}) \otimes \cdots \otimes \Sigma \A(A_{0},A_{1}) \to \Sigma \A(A_{0},A_{i})$$ of degree 1 satisfying the equation
        \begin{equation} \label{eq - Ainfty b} \sum_{j+k+l=i} b_{j+1+l}(1^{\otimes j} \otimes b_{k} \otimes 1^{\otimes l}) = 0;\end{equation}
        \item For each object $A$ an \ti{identity or unit morphism} $\tn{id}_{A}\in \A(A,A)_{0}$ satisfying
        \begin{align*}
            b_{2}(s \tn{id}_{A}, sg) &= sg &&\tn{if $n \geq 2$,}\\
            b_{2}(sf, s\tn{id}_{A}) &= (-1)^{\vert f \vert}sf &&\tn{if $n \geq 2$,}\\
            b_{i}(\ldots,s\tn{id}_{A},\ldots) &= 0 &&\tn{if $n = 1$ or $3 \leq i \leq n$},
        \end{align*}
        where $f$ and $g$ are composable morphisms.
    \end{itemize}
\end{definition}

\begin{remark} \label{remark - An-cat small values} The higher compositions can also be expressed as operations
$$m_{i}:\A(A_{i-1},A_{i}) \otimes \cdots\otimes \A(A_{0},A_{1}) \to \A(A_{0},A_{i})$$
of degree $2-i$, via the relation
$b_{i} := -sm_{i}\omega^{\otimes i}$. Using the Koszul sign rule, we also obtain the
converse formula: $m_{i} = -(-1)^{\frac{i(i-1)}{2}} \omega b_{i} s^{\otimes i}$. Equation \zcref{eq - Ainfty b} then corresponds to 
\begin{equation} \label{eq - Ainfty m}
    \sum_{j+k+l=i} (-1)^{j+kl} m_{j+1+l}(1^{\otimes j} \otimes m_{k} \otimes 1^{\otimes l}) = 0,
\end{equation}
which is relation $(*)_{i}$ in \cite[Définition 1.2.1.1]{Lefevre}. The operation $m_{1}$ is sometimes called the \ti{differential}, and denoted by $d$, and $m_{2}$ is also called the \ti{composition}, where we write $m_{2}(g,f) = gf$. For small values of $n$, we obtain the following relations (where extra signs appear because of the Koszul sign convention):
\begin{align} \label{eq - b2 to m2}
    b_{1}(sf) &= -sm_{1}(f), \nonumber \\
    b_{2}(sg,sf) &= (-1)^{\vert g \vert} sm_{2}(g,f), \\
    b_{3}(sh,sg,sf) &= -(-1)^{\vert g \vert}sm_{3}(h,g,f). \nonumber
\end{align}
\end{remark}

\subsection{\texorpdfstring{$A_{m}$}{Am}-functors}
\begin{definition}
    An $A_{m}$-functor $f: \A \to \B$ between two $A_{\infty}$-categories $\A$ and $\B$ with $m \leq n$ is the data of
    \begin{itemize}
        \item[-] a map on objects $f: \tn{Ob}(\A) \to \tn{Ob}(\B)$;
        \item[-] for every $1 \leq i \leq m$ and every sequence $(A_{0},\ldots,A_{i})$ of objects in $\A$, compositions $$f_{i}: \Sigma \A(A_{i-1},A_{i}) \otimes \cdots \otimes \Sigma \A(A_{0},A_{1}) \to  \Sigma \B(f(A_{0}),f(A_{i}))$$ of degree zero satisfying the equation
        \begin{equation} \label{eq - Ainfty f}
            \sum_{j+k+l=i} f_{j+1+l}(1^{\otimes j} \otimes b_{k} \otimes 1^{\otimes l}) = \sum_{\substack{i_{1} + \cdots + i_{r} = i}} b_{r}(f_{i_{1}}\otimes \cdots \otimes f_{i_{r}}),
        \end{equation}
        and for each object $A$,
         \begin{align*}
            f_{1}(s\tn{id}_{A}) &= s \tn{id}_{f(A)} && \tn{ if }m \geq 1, \\
            f_{i}(\ldots,s\tn{id}_{A},\ldots) &= 0 &&\tn{ if } m \geq 2.
         \end{align*}
    \end{itemize}
   An $A_\infty$-functor is the data $(f,(f_i)_i)$ such that $(f,(f_i)_{i\le m})$ defines an $A_m$-functor for each $m$.
\end{definition}

\begin{remark}
    The compositions $f_{i}$ can also be expressed as operations
    $$g_{i}: \A(A_{i-1},A_{i}) \otimes \cdots \otimes \A(A_{0},A_{1}) \to \B(A_{0},A_{i})$$
    of degree $1-i$ via the relation $f_{i} := sg_{i}\omega^{\otimes i}$. Condition \zcref{eq - Ainfty f} then translates to $(**)_{i}$ in \cite[Définition 1.2.1.2]{Lefevre}.
\end{remark}

\begin{definition}
\label{def:functors}
    Let $f: \A \to \B$ be an $A_{m}$-functor between $A_{\infty}$-categories. We say that
    \begin{itemize}
        \item[-] $f$ is \ti{strict} if $m \geq 1$ and $f_{i}=0$ for $i\geq 2$. Equivalently, $f_{1}$ commutes with higher compositions with arity at most $m$.
        \item[-] $f$ is \ti{fully faithful} if it is strict and for all $A,A' \in \tn{Ob}(\A)$, $f_{1}$ induces an isomorphism of vector spaces $\A(A,A') \to \B(f(A),f(A'))$.
    \end{itemize}
    Assume that $m \geq 2$. Then
    \begin{itemize}
        \item[-] $f$ is \ti{quasi-fully faithful} if $H^{*}(f)$ is fully faithful.
        \item[-] $f$ is \ti{quasi-essentially surjective} if $H^{*}(f)$ is essentially surjective.
        \item[-] $f$ is a \ti{quasi-isomorphism} if $H^{*}(f)$ is an isomorphism.
        \item[-] $f$ is a \ti{quasi-equivalence} if $H^{*}(f)$ is an equivalence.
    \end{itemize}
\end{definition}

\subsection{Reminder about twisted complexes}
\label{sec:twisted}

Below, $\A$ is a fixed $A_\infty$-category.
\begin{definition}
    $\tn{Free}(\A)$ is the graded graph with as objects finite (possibly empty) formal direct sums of formal shifts of objects of $\A$,
        $$A = \oplus_{i \in I} \Sigma^{a_{i}} A_{i}$$
        where $A_{i} \in \tn{Ob}(\A), a_{i} \in \mb{Z}$ and $\vert I \vert < \infty$. Morphisms are defined by
        $$\tn{Free}(\A)(\oplus_{i \in I} \Sigma^{a_{i}} A_{i}, \oplus_{j \in J} \Sigma^{b_{j}}  B_{j}) = \bigoplus_{\substack{i \in I, j \in J}} \Sigma^{b_{j}-a_{i}}\A(A_{i},B_{j}).$$

        If we consider an element $f \in \A(A,B)$ as an element of $\tn{Free}(\A)(\Sigma^{a}A,\Sigma^{b}B)$, then we will denote it by $\sigma^{b-a}f$ with $\vert \sigma^{b-a} f \vert = \vert f \vert - (b-a)$.  Formally, $\sigma^{b-a}$ is the graded morphism which is the composition
\[
\A(A,B) \xrightarrow{s^{b-a}} \Sigma^{b-a}\A(A,B) = \tn{Free}(\A)(\Sigma^{a}A,\Sigma^{b}B)
\]
where $s^{b-a}$ was introduced in \zcref{sec:gradings}.
\end{definition}

\begin{remark}
Note that the notation $\sigma^{b-a}f$ is only defined for $f \in \A(A,B)$ and cannot be extended to all of $\tn{Free}(\A)$. For example, one cannot make sense of $\sigma  \sigma^{-1}f$ as  $\sigma$ and $\sigma^{-1}$ would have to anti-commute.
\end{remark}

It is easy to see that
    $\tn{Free}(\A)$ is an $A_\infty$-category with higher compositions for $1 \leq i \leq n$ defined as
    $$b_{\tn{Free}(\A),i}(s \sigma^{a_{i} - a_{i-1}}f_{i}, \ldots, s \sigma^{a_{1}-a_{0}}f_{1}) = \pm \sigma^{a_{i}-a_{0}}b_{i}(sf_{i},\ldots,sf_{1})$$
        on a sequence
        $$\Sigma^{a_{0}}A_{0} \xrightarrow{\sigma^{a_{1}-a_{0}}f_{1}} \Sigma^{a_{1}} A_{1} \xrightarrow{\sigma^{a_{2} - a_{1}}f_{2}} \cdots \xrightarrow{\sigma^{a_{i} - a_{i-1}}f_{i}} \Sigma^{a_{i}}A_{i},$$
        where the sign is determined by the usual Koszul sign convention (with the rule $s\sigma = - \sigma s)$. $\tn{Free}(\A)$ is equipped with a strict 
(see \zcref{def:functors})
$A_{\infty}$-endofunctor $\Sigma$ given by 
        \begin{align*}
            \Sigma(\oplus_{i} \Sigma^{a_{i}} A_{i}) &= \oplus_{i} \Sigma^{a_{i}+1} A_{i}, \\
            \Sigma(\sigma^{a} f) &= (-1)^{a} \sigma^{a}f.
        \end{align*}
\begin{definition} A \emph{twisted complex} over $\A$ is a pair $(A,\delta_A)$, where $A\in \Ob(\tn{Free}(\A))$ and $\delta$ is a strictly lower triangular
  matrix in $\tn{Free}(\A)(A,A)$ (with respect to the decomposition of $A$ as a formal
  sum of shifted objects in $\A$) that is in addition 
  a \emph{Maurer-Cartan} element,
 i.e. it satisfies
\begin{equation}
\label{eq:maurercartan}
\sum b_{i}(s \delta_{A},\ldots,s\delta_{A}) = 0.
\end{equation}
Here and below, we note that sums as in \zcref{eq:maurercartan} are finite because of the assumption that $\delta_A$ is strictly lower triangular.
  \end{definition}
  Twisted complexes form an $A_\infty$-category $\Tw \A$ with
  \[
    (\Tw \A)((A,\delta_A), (B,\delta_B))=\tn{Free}(\A)(A,B),
    \]
    and higher compositions 
    for $i\ge 1$ defined by
        \begin{multline*}
            b_{\Tw \A,i}(sg_{i},\ldots,sg_{1}) \\
            = \sum_{\substack{l_{0},\ldots,l_{i} \\ h = i + \sum_{j} l_{j}}} b_{\tn{Free}(\A),h}(\underbrace{s\delta_{i},\ldots,s\delta_{i}}_{l_{i}},sg_{i},\underbrace{s\delta_{i-1},\ldots,s\delta_{i-1}}_{l_{i-1}},\ldots,\underbrace{s\delta_{1},\ldots,s\delta_{1}}_{l_{1}},sg_{1},\underbrace{s\delta_{0},\ldots,s\delta_{0}}_{l_{0}})
        \end{multline*}
        on a sequence
        $$(A_{0},\delta_{0}) \xrightarrow{g_{1}} (A_{1},\delta_{1}) \xrightarrow{g_{2}} \cdots \xrightarrow{g_{i}} (A_{i},\delta_{i}).$$

The category $H^0(\Tw \A)$ has a natural triangulated structure. The shift is inherited from $\tn{Free}(\A)$ and  the \emph{cone} of a morphism $f: (A,\delta_{A}) \to (B,\delta_{B})$ in $Z^{0}(\Tw \A)$ is the object $(\Sigma A \oplus B, \delta_{C(f)}) \in \Tw \A$ such that
\begin{equation}
  \label{eq:cone}
\delta_{C(f)} = \begin{pmatrix} -\delta_{A} & 0 \\ \sigma^{-1}f & \delta_{B}
                      \end{pmatrix}.
\end{equation}
This yields a so-called \emph{standard distinguish triangle} in $\Tw \A$,
    \begin{equation}
\label{eq:nonstandard}
\begin{tikzcd}
        A & B & {(C(f),\delta_{C(f)})} & A,
        \arrow["f", from=1-1, to=1-2]
        \arrow["i", from=1-2, to=1-3]
        \arrow["p", from=1-3, to=1-4]
        \arrow["{(1)}"', from=1-3, to=1-4]
    \end{tikzcd}
\end{equation}
    where
    $$i = \begin{pmatrix} 0 \\ \tn{id}_{B} \end{pmatrix}, \qquad p = \begin{pmatrix} \sigma^{-1}\tn{id}_{A} & 0 \end{pmatrix}.$$
    The distinguished triangles in $H^0(\Tw \A)$ are the triangles that are isomorphic in $H^0(\Tw \A)$ to a standard distinguished triangle. It will also be convenient to make the following definition:
    \begin{definition} \label{def - cone} A triangle (see \zcref{sec:enrichment}) in $H^\ast(\A)$
      is distinguished if this is the case for its image in $H^\ast(\Tw \A)$.
      \end{definition}

Following \cite{BondalKapranov}, we say that $\A$ is \emph{pretriangulated} if the inclusion $\A\rightarrow \Tw \A$ is a quasi-equivalence (see \zcref{def:functors}).
In that case, the distinguished triangles and the shift inherited from $H^0(\Tw \A)$ make $H^0(\A)$  into a triangulated category.

If $f: \A \to \B$ is an $A_{\infty}$-functor between $A_{\infty}$-categories, then $f$ induces
an $A_\infty$-functor $\Tw f:\Tw \A \to \Tw \B$, defined on objects $(A,\delta_{A}) \in \Tw \A$ as
$$(\Tw f)(A,\delta_{A}) = (f(A),s^{-1}\sum_{i \leq q} f(s\delta_{A},\ldots,s\delta_{A})),$$  and on a sequence of composable arrows
$$(A_{0},\delta_{0}) \xrightarrow{g_{1}} (A_{1},\delta_{1}) \xrightarrow{g_{2}} \cdots \xrightarrow{g_{d}} (A_{d},\delta_{d})$$
as
\begin{equation}
  \label{eq:tw_functor}
  (\Tw f)_d(sg_{1},\ldots,sg_{d}) = \sum f_{d+i_{0}+\cdots+i_{d}}((s\delta_{d})^{\otimes i_{d}},sg_{d},(s\delta_{d-1})^{\otimes i_{d-1}},\ldots,sg_{1},(s\delta_{0})^{\otimes i_{0}}).
  \end{equation}

\subsection{Idempotent completion}
\label{sec:idempotent_completion}
If $\A$ is an additive category, then its \emph{idempotent completion} is written as $\Split(\A)$ (cf. \cite{BalmerSchlichting}). Recall the following
\begin{thm}[{\cite[Theorem 1.12]{BalmerSchlichting}}] \label{thm:id_triangulated}
If $\A$ is triangulated, then
$\Split(\A)$ is also triangulated with the shift functor inherited from $\A$ and with
as distinguished triangles the direct summands of distinguished triangles in $\A$. 
\end{thm}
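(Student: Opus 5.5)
The plan follows Balmer–Schlichting: build $\Split(\A)$ concretely as the category of pairs $(A,e)$ with $e=e^2$ an idempotent endomorphism of an object $A$. Morphisms $(A,e)\to(B,e')$ are the maps $f$ with $f=e'fe$. Additivity is immediate. Shift is $\Sigma(A,e)=(\Sigma A,\Sigma e)$. We declare a triangle in $\Split(\A)$ distinguished if it is a direct summand of the image of a distinguished triangle of $\A$. The key preliminary observation is that every object $(A,e)$ has a complement $(A,1-e)$, and that $(A,e)\oplus(\Sigma A,\Sigma(1-e))$ behaves well under the shift. Most of the axioms come almost for free: closure under isomorphism and summands is by definition, and rotation and (TR1) for identity maps are inherited from $\A$ by taking summands.

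First I would prove that every morphism $f:(A,e)\to(B,e')$ in $\Split(\A)$ sits in a distinguished triangle. The trick is to enlarge $f$ to a morphism in $\A$ whose cone splits compatibly. Concretely, I would consider the map
\[
f\oplus 0\oplus 0:(A,e)\oplus(A,1-e)\oplus(\Sigma A,\Sigma(1-e))\longrightarrow (B,e')\oplus (\Sigma A,\Sigma(1-e))\oplus(B,1-e'),
\]
adjusted so that its ambient components are honest objects of $\A$; this is the Balmer–Schlichting "even/odd" device, using $A\oplus\Sigma A$ and $B\oplus\Sigma^{-1}B$. I would then take its cone $C$ in $\A$. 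The next step is to show that the idempotent on the source and target extends to an idempotent on $C$. The construction is arranged so that the identity-type summands contribute contractible pieces. To do this, I would use the fact that, for a map $0\to X$ or $X\xrightarrow{1}X$, cones are canonical up to isomorphism. Then the triangle for $f$ in $\Split(\A)$ is a direct summand of that distinguished triangle in $\A$.

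Second, I would verify (TR3) and the octahedron. For (TR3), given a morphism of summand triangles, lift it to the ambient triangles in $\A$ by composing with the inclusions and projections. Apply (TR3) in $\A$ to get a filler, then compress it by the idempotents. The commutativity of the squares survives because the projections are natural. For the octahedral axiom, I would reduce to triangles of the special form built in the first step, where the summand structure is explicit, and apply the octahedron in $\A$ to the enlarged maps.

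The main obstacle is the existence step, where the idempotent must extend to the cone. Cones in $\A$ are functorial only up to non-unique isomorphism, so an idempotent on $A$ and $B$ need not induce an idempotent on $C(f)$. My first attempt would be the Balmer–Schlichting route. It shows that in $\Split(\A)$ an object $(A,e)$ together with $\Sigma$ of its complement can be made into an object of $\A$ precisely when a certain class in $K_0$ vanishes. It also uses the fact that a triangle in which the two end terms are honest objects of $\A$ has a cone in $\A$. Since I am free to assume the theorem is cited, if this becomes intricate I would ultimately rely on \cite[Theorem 1.12]{BalmerSchlichting} for this step.
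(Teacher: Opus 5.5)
The paper gives no argument of its own for this statement; it simply quotes \cite[Theorem 1.12]{BalmerSchlichting}. Your treatment of rotation, of TR1 for identities and of TR3 (compressing a filler found in $\A$ by the inclusions and projections of the summand triangles) is fine. The step you yourself single out, however, is not proved, and this is a genuine gap: you never show that every morphism of $\Split(\A)$ sits in a triangle that is a summand of a distinguished triangle of $\A$.

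Your mechanisms for that step do not work:
\begin{itemize}
\item The enlarged map $f\oplus 0\oplus 0$ is not a morphism between objects of $\A$. It also has no identity components, so no ``contractible pieces'' occur.
\item The canonicity of cones of $0\to X$ and $X\xrightarrow{1}X$ says nothing about splitting the cone of a map that is not a direct sum of maps in $\A$.
\item The fallback is circular. A $K_0$ criterion of the kind you describe (Thomason's classification of dense subcategories) is formulated for triangulated categories, so it presupposes that $\Split(\A)$ is triangulated. You then end by invoking the very theorem being proved.
\end{itemize}

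The missing idea is a nilpotence trick, and no enlargement is needed. A morphism $f\colon(A,e)\to(B,e')$ already is a morphism $f\colon A\to B$ of $\A$ with $fe=f=e'f$. Complete it to a distinguished triangle $A\xrightarrow{f}B\xrightarrow{v}C\xrightarrow{w}\Sigma A$ in $\A$. By TR3 there is $\gamma\colon C\to C$ such that $(e,e',\gamma)$ is an endomorphism of this triangle. Then $(0,0,\gamma^2-\gamma)$ is also an endomorphism.

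Any endomorphism of the form $(0,0,h)$ satisfies $h^2=0$. Indeed, $hv=0$ gives $h=kw$ for some $k$, and $wh=0$, so $h^2=kwh=0$. Hence $\epsilon:=3\gamma^2-2\gamma^3$ is idempotent, since $\epsilon^2-\epsilon=-(\gamma^2-\gamma)^2(3-2\gamma)(1+2\gamma)=0$. Moreover $(e,e',\epsilon)=(3e^2-2e^3,\,3e'^2-2e'^3,\,\epsilon)$ is an idempotent endomorphism of the triangle. Its image $(A,e)\xrightarrow{f}(B,e')\to(C,\epsilon)\to\Sigma(A,e)$ is the required direct summand. The complement is the corresponding triangle for $1-e$, $1-e'$, $1-\epsilon$.

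Your octahedral step has the same defect in sharper form and needs a separate argument. Applying TR4 in $\A$ to the underlying (or enlarged) maps gives cones $C(f)$, $C(g)$, $C(gf)$ whose idempotents are chosen independently, one triangle at a time. Nothing forces these idempotents to commute with the maps $C(f)\to C(gf)\to C(g)$ of the fourth triangle of the octahedron. So you cannot simply ``take the summand'' of the octahedron in $\A$. Your phrase ``the summand structure is explicit'' refers back to the construction of the first step, which was never actually given. The compatibility of the idempotents with the whole octahedron is exactly what remains to be established.
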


If $\A$ is an $A_\infty$-category, then it has a natural enhancement $\Split_{\infty}(\A)$ in the sense that
\[
  H^{0}(\Split_{\infty}(\A)) \cong \Split(H^{0}(\A));
\]
see \cite[(4c) before Lemma 4.7]{Seidel}. We call $\Split_{\infty}(\A)$ the \emph{$A_\infty$-idempotent completion} of $\A$.
\zcref{thm:id_triangulated} has an  $A_\infty$-version.
\begin{thm}[{\cite[Lemma 4.8]{Seidel}}] If $\A$ is pretriangulated then so is $\Split_\infty(\A)$.
\end{thm}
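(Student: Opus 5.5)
The plan is to realise $\Split_\infty(\A)$ concretely inside a pretriangulated dg-category, and then check closure under shifts and cones at the level of $H^0$.

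\textbf{The model.} Following \cite{Seidel}, I would model $\Split_\infty(\A)$ as a full $A_\infty$-subcategory of $\C_\infty(\A)$, or of the dg-category of modules over $U(\A)$; the choice is immaterial up to quasi-equivalence. Its objects are the modules $P$ which, in $H^0(\C_\infty(\A))$, are direct summands of Yoneda images $\mc{Y}(A)$ with $A\in\A$. Write $\mc{S}$ for this full subcategory and $\mc{M}:=\C_\infty(\A)$.
\begin{itemize}
\item $\mc{M}$ is a pretriangulated dg-category: it has strict shifts and cones of closed degree-zero morphisms.
\item The Yoneda embedding $\A\to\mc{S}$ is quasi-fully faithful.
\item Hence $H^0(\mc{S})\cong \Split(H^0(\A))$.
\end{itemize}

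\textbf{Reduction to a closure criterion.} Let $\mc{S}\subset\mc{M}$ be a full subcategory of a pretriangulated dg- or $A_\infty$-category, and suppose the essential image of $H^0(\mc{S})$ in $H^0(\mc{M})$ is closed under shifts and cones. Then $\mc{S}$ is pretriangulated. The reason is that $\mc{S}\to\Tw\mc{S}$ is quasi-fully faithful, so only quasi-essential surjectivity needs checking. The totalisation functor $\Tw\mc{S}\to\Tw\mc{M}\simeq\mc{M}$ is quasi-fully faithful. Any twisted complex over $\mc{S}$ is built from objects of $\mc{S}$ by finitely many shifts and cones, by induction on the length of the lower-triangular filtration. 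So its totalisation lies in the essential image of $H^0(\mc{S})$, which gives the required essential surjectivity.

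\textbf{Shifts.} Suppose $P\oplus P'\cong\mc{Y}(A)$ in $H^0(\mc{M})$. Then $\Sigma P\oplus\Sigma P'\cong \Sigma\mc{Y}(A)$. Since $\A$ is pretriangulated, $\Sigma A$ is represented by an object $A_1\in\A$, and $\Sigma\mc{Y}(A)\cong\mc{Y}(A_1)$. Hence $\Sigma P$ lies in $\mc{S}$ up to isomorphism.

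\textbf{Cones.} Let $f:P\to Q$ be a morphism in $H^0(\mc{S})$, with complements $P\oplus P'\cong\mc{Y}(A)$ and $Q\oplus Q'\cong\mc{Y}(B)$.
\begin{enumerate}
\item Consider $g=f\oplus 0: \mc{Y}(A)\to\mc{Y}(B)$. By quasi-full faithfulness of Yoneda, $g$ comes from a closed degree-zero morphism $g':A\to B$ in $\A$.
\item Since $\A$ is pretriangulated, $\mathrm{cone}(g')$ is represented by some $C\in\A$, and $\mathrm{cone}(g)\cong\mc{Y}(C)$ in $H^0(\mc{M})$.
\item In the triangulated category $H^0(\mc{M})$, cones of direct sums of morphisms are direct sums of cones. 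Therefore
\[
\mathrm{cone}(g)\cong \mathrm{cone}(f)\oplus \Sigma P'\oplus Q'.
\]
\item Consequently $\mathrm{cone}(f)$ is a direct summand of $\mc{Y}(C)$, and so lies in $\mc{S}$.
\end{enumerate}

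\textbf{Main obstacle.} The conceptual steps above are straightforward. The delicate part is the bookkeeping that identifies Seidel's formal construction of $\Split_\infty$ with the model $\mc{S}\subset\C_\infty(\A)$. In Seidel's construction, objects are pairs consisting of an object and an $A_\infty$ homotopy idempotent. One has to check that the resulting $A_\infty$-category is quasi-equivalent to $\mc{S}$, compatibly with the embedding of $\A$, so that pretriangularity transfers.

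For that step I would use Seidel's abstract-image construction: each homotopy idempotent has an image module in $\C_\infty(\A)$. This assignment gives a quasi-fully faithful functor into $\mc{M}$ whose essential image is exactly the set of summands of representables. Pretriangularity is invariant under quasi-equivalence, since $\Tw$ preserves quasi-equivalences, so it then transfers from $\mc{S}$ to $\Split_\infty(\A)$.
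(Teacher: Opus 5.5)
Your proof is correct. The paper gives no argument of its own for this statement, only the citation to \cite[Lemma 4.8]{Seidel}, and your route is the standard argument behind that lemma: view summands of Yoneda modules inside the pretriangulated dg-category $\C_\infty(\A)$, check closure under shifts, and get closure under cones from $\operatorname{cone}(f\oplus 0)\cong\operatorname{cone}(f)\oplus\Sigma P'\oplus Q'$ together with pretriangulatedness of $\A$. The identification step you flag as delicate is routine, since Seidel's split closure is already built inside the module category from abstract images of homotopy idempotents, and up to quasi-isomorphism these are exactly the summands of representables.
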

\subsection{Truncated twisted complexes}
The formula \zcref{eq:tw_functor} cannot be used directly if $f$ is an $A_m$-functor since the expansion on the right of \zcref{eq:tw_functor} may involve expressions
with more than $m$ arguments. We can solve this by restricting the Maurer Cartan elements.

\begin{definition}[\ti{Truncated twisted complexes in $\A$.}] \label{truncated}
  Given $0 \leq q \leq n$, $\Tw_{\leq q} \A$ is the $A_\infty$-subcategory of $\Tw \A$ with
  objects $(A,\delta)$, where $\delta$ is a block matrix with $\le q+1$ blocks. We call the objects of $\Tw_{\le q}\A$ \ti{truncated twisted complexes of length (at most) $q$ in $\A$.}
  \end{definition}
  
\begin{proposition}[{\cite[Lemma 6.7]{RVdB}}] \label{prop - functoriality *q and Twq}
  Let $f: \A \to \B$ be an $A_{m}$-functor between $A_{\infty}$-categories, where $q \leq m$. Then $f$ induces
  an $A_{\lfloor \frac{m-q}{q+1} \rfloor}$-functor $\Tw_{\leq q}f:\Tw_{\leq q}\A \to \Tw_{\leq q}\B$ via the formula 
  \zcref{eq:tw_functor}. Moreover $f\mapsto \Tw_{\le q} f$ is strictly compatible with composition.
\end{proposition}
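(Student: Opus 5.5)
Since $\Tw_{\le q}\A$ and $\Tw_{\le q}\B$ are defined as subcategories of $\Tw\A$ and $\Tw\B$, the plan is to check directly that formula \zcref{eq:tw_functor} makes sense on these subcategories and satisfies the $A_k$-functor equations for $k=\lfloor \frac{m-q}{q+1}\rfloor$.

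\textbf{Objects.} Let $(A,\delta_A)\in\Tw_{\le q}\A$. Because $\delta_A$ is strictly lower triangular with at most $q+1$ blocks, any product of more than $q$ factors of $\delta_A$ is zero. So in $(\Tw f)(A,\delta_A)=(f(A),s^{-1}\sum_i f_i(s\delta_A,\ldots,s\delta_A))$ only the terms with $i\le q\le m$ can be nonzero. For this, $f$ is extended to $\tn{Free}(\A)$ entrywise in the block matrices, with Koszul signs. The resulting matrix is again strictly lower triangular with the same block decomposition, so it has at most $q+1$ blocks.

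\textbf{Maurer--Cartan equation.} We must show that \zcref{eq:maurercartan} holds for $f_\ast(\delta_A)$. The standard argument applies the $A_\infty$-functor equation \zcref{eq - Ainfty f} with all inputs equal to $s\delta_A$ and sums over arities. Nilpotency truncates everything, so only instances of \zcref{eq - Ainfty f} of arity $\le q\le m$ occur, and these hold because $f$ is an $A_m$-functor.

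\textbf{Morphisms.} For a chain $g_1,\ldots,g_d$, the component $(\Tw f)_d$ inserts $\delta$'s in $d+1$ slots. Each slot can hold at most $q$ consecutive copies of $s\delta_j$ before the product vanishes. Hence the arity used is at most $d+(d+1)q$. Requiring $d+(d+1)q\le m$ gives exactly $d\le \frac{m-q}{q+1}$, which is the claimed bound.

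\textbf{$A_k$-relations.} Next we verify \zcref{eq - Ainfty f} for $\Tw f$ in arity $d\le k$. Expand both sides using the definitions of $b_{\Tw\A}$, $b_{\Tw\B}$ and $\Tw f$. After regrouping, the difference is a sum of instances of \zcref{eq - Ainfty f} for $f$, with inputs consisting of the $g$'s interspersed with $\delta$'s. The $b_{\Tw\B}$ side can place images $f_\ast(\delta_j)$ between the $f$-blocks. These unfold into $f$-terms in $\delta_j$, and a regrouping identity (the Maurer--Cartan bookkeeping) matches them with the terms on the left. Again all surviving arities are at most $d+(d+1)q\le m$, so the needed instances hold.

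Units are handled as follows. Strict unitality of $f$, together with the fact that no $\delta$ equals an identity, gives $(\Tw f)_1(s\,\tn{id})=s\,\tn{id}$ and shows that the higher components vanish on units.

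\textbf{Compatibility with composition.} For $g\circ f$, compute $\Tw(g)\circ\Tw(f)$. The composition formula for $A_\infty$-functors sums over partitions of the input string. Each block of $f$ absorbs its interior $\delta$'s, and the $\delta$'s lying between blocks are produced by $f_\ast(\delta)$ in the target twisted complex. This gives exactly $\Tw(g\circ f)$ term by term, a bijection of index sets. All arities again stay within range.

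The main obstacle is the sign and bookkeeping verification in the $A_k$-relations step: checking that regrouping $\delta$-insertions on the $b_{\Tw\B}$ side reproduces precisely the terms on the left. I would handle it through the bar-construction viewpoint. $\Tw$ twists the coalgebra morphism by the group-like element $\sum (s\delta)^{\otimes n}$, and the identity then follows from the functor being a dg-coalgebra map, truncated by weight. The weight filtration certifies that only components of weight $\le m$ are ever used.
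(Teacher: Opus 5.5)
Your proposal is correct and follows essentially the same route as \cite[Lemma 6.7]{RVdB}, which the paper cites without reproducing and mirrors in its proof of the directed analogue: expand everything into composable strings in $\A$, note that each of the $d+1$ slots around the $g$'s carries at most $q$ copies of $\delta$, so only the $A_m$-relations of $f$ in arity $\le d+(d+1)q\le m$ are needed, which gives $d\le\lfloor\frac{m-q}{q+1}\rfloor$. Your bar-construction packaging (twisting by the group-like elements $\sum_n (s\delta)^{\otimes n}$, which $f$ sends to $\sum_n (s f_*\delta)^{\otimes n}$) is a clean way to handle the same sign bookkeeping, and it also yields the strict compatibility with composition.
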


% ----------------------------------------------------------------------------------------
% 3 - A TRIANGULATED EQUIVALENCE FOR N-FORMAL AINFTY-CATS
\settocdepth{subsection}
\section{Perfect dimension for \texorpdfstring{$A_\infty$}{Ainfty}-categories} \label{section - perfect dimension}
\subsection{Perfect dimension}
It will be convenient to make the following definition.
\begin{definition} 
 \label{def:perfect_dimension} 
An $A_\infty$-category $\A$ has \emph{perfect dimension} $\le q$ if the inclusion $$\langle \A \rangle_{q+1} \hookrightarrow \Perf(\A)$$ is an equivalence, where
\begin{itemize}
\item $\langle \A\rangle_{q+1}$ (see \cite[\S 2.2]{BondalVdB}) is the full subcategory of $\D(\A)$ spanned by objects that can be created from objects in $\A$ via sums, summands, and at most $q$ cones;
\item $\Perf(\A)$ (see \cite[\S 2.2]{BondalVdB}) is the category of perfect $\A$-modules.
\end{itemize}
\end{definition}
\begin{remark}
Note that the perfect dimension is not a Morita invariant of $\A$ since it depends on the choice of the specific generator $\A$ of  $\Perf(\A)$.
\end{remark}

\begin{lemma} \label{lem:perfect_dimension}
If $\A$ has perfect dimension $\le q$, then the \emph{totalization functor}, i.e. the strict $A_\infty$-functor that collapses a twisted complex into a single $A_\infty$-$\A$-module:
\[
\operatorname{Tot}:\Tw_{\le q} \A \rightarrow \C_\infty(\A)
\]
induces an equivalence
\begin{equation}
\label{eq:spliteq}
H^0(\operatorname{Tot}):\Split(H^0(\Tw_{\le q} \A)) \rightarrow \Perf(\A)
\end{equation}
    \begin{proof}
        By \cite{BondalVdB}, $\tn{Perf}(\A) = \bigcup_q \langle \A \rangle$, and by definition (cf. \cite[\S 2.2]{BondalVdB}), $$\langle \A \rangle_{q+1} = \tn{smd}(\tn{smd}(\tn{Free}(\A)) \star \cdots\star \tn{smd}(\tn{Free}(\A)))$$
        with $q+1$ factors, where
        \begin{itemize}
            \item[-] $\tn{smd}(\tn{Free}(\A))$ denotes the minimal strictly full subcategory of $\D(\A)$ containing $\tn{Free}(\A)$ that is closed under (possible) direct summands;
            \item[-] $\tn{Free}(\A)  \star \tn{Free}(\A)$ is the strictly full subcategory of $\D(\A)$ whose objects $X$ occur in a triangle $A \to X \to A' \rightarrow \Sigma A$ for $A,A' \in \tn{Free}(\A)$.
        \end{itemize} 
        We can then apply \cite[Lemma 2.2.1]{BondalVdB} to obtain the equality
        $$\langle \A \rangle_{q+1} = \tn{smd}(\tn{Free}(\A)  \star \cdots \star \tn{Free}(\A)).$$
        Since $\tn{Free}(\A)  \star \cdots \star \tn{Free}(\A) \hookrightarrow H^{0}(\tn{Tot})(H^{0}(\Tw_{\leq q} \A))$ and $\tn{Perf}(\A)$ is thick, we have the inclusions
        $$\langle \A \rangle_{q+1} \hookrightarrow \Split(H^{0}(\tn{Tot})(H^{0}(\Tw_{\leq q} \A))) \hookrightarrow \tn{Perf}(\A).$$
        The claim then follows from the fact that $\A$ has perfect dimension $q$.
    \end{proof}
\end{lemma}
If follows from \zcref{eq:spliteq} that $\Split(H^0(\Tw_{\le q} \A))$ is a triangulated category (or equivalently, that $\Split_\infty(\Tw_{\le q} \A)$ is pretriangulated).
For the future, we record the following observation.
\begin{lemma}
\label{lem:splitlemma}
A triangle in $\Split(H^0(\Tw_{\le q} \A))$ is distinguished if and only if it is isomorphic to a direct summand of a distinguished triangle (as defined in \zcref{def - cone}) in
$H^0(\Tw_{\le 1}(\Tw_{\le q}\A))$.
\end{lemma}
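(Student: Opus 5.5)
We compare two triangulated structures on the same additive category. Set $\mathcal{C}=\Tw_{\le q}\A$. By \zcref{lem:perfect_dimension}, $\Split_\infty(\mathcal{C})$ is pretriangulated and $H^0(\Split_\infty(\mathcal{C}))\cong\Split(H^0(\mathcal{C}))\cong \Perf(\A)$. The distinguished triangles of $H^0(\Split_\infty(\mathcal{C}))$ are those isomorphic to images of standard cone triangles in $H^0(\Tw\,\Split_\infty(\mathcal{C}))$. Since $\Split_\infty(\mathcal{C})$ is pretriangulated, the inclusion into $\Tw$ is a quasi-equivalence. Hence every cone of a map between objects of $\Split_\infty(\mathcal{C})$ can already be formed using one-step twisted complexes, i.e.\ inside $\Tw_{\le 1}(\Split_\infty(\mathcal{C}))$.

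The plan has three steps.

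\textbf{Step 1 (reduction to summands).} Let $X\to Y\to Z\to\Sigma X$ be a distinguished triangle in $\Split(H^0(\mathcal{C}))$. Choose $X',Y'$ with $X\oplus X'$ and $Y\oplus Y'$ isomorphic to objects of $H^0(\mathcal{C})$. Adding the trivial distinguished triangles $X'\to X'\to 0\to\Sigma X'$ and $0\to Y'\to Y'\to 0$ realises the given triangle as a direct summand of a distinguished triangle $\tilde X\xrightarrow{\tilde f}\tilde Y\to \tilde Z\to\Sigma\tilde X$ with $\tilde X,\tilde Y\in H^0(\mathcal{C})$. Here we use \zcref{thm:id_triangulated}: direct sums of distinguished triangles are distinguished, and summands are distinguished as well.

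\textbf{Step 2 (identifying the cone).} Lift $\tilde f$ to a closed degree-zero morphism in $\mathcal{C}$ and form its standard cone in $\Tw_{\le 1}(\mathcal{C})\subset\Tw\,\mathcal{C}$. Then $\tilde X\to\tilde Y\to C(\tilde f)\to\Sigma\tilde X$ is distinguished in the sense of \zcref{def - cone} for the $A_\infty$-category $\mathcal{C}$. Next, the strict functor $\mathcal{C}\to\Split_\infty(\mathcal{C})$ induces $\Tw\,\mathcal{C}\to\Tw\,\Split_\infty(\mathcal{C})$, and this sends standard cones to standard cones. So this triangle is distinguished in $H^0(\Split_\infty(\mathcal{C}))$, and by uniqueness of cones up to isomorphism it is isomorphic to $\tilde X\to\tilde Y\to\tilde Z\to\Sigma\tilde X$. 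Together with Step 1, this proves ``only if''.

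\textbf{Step 3 (the converse).} The image of a distinguished triangle of $H^0(\Tw_{\le 1}\mathcal{C})$ is distinguished in $H^0(\Split_\infty(\mathcal{C}))$, by the same functoriality of standard cones. Direct summands of distinguished triangles are distinguished in $\Split$ by \zcref{thm:id_triangulated}, and being distinguished is invariant under isomorphism. This proves ``if''.

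The main obstacle is Step 2. One must check that the triangulated structure on $\Split(H^0(\mathcal{C}))$ — defined through the pretriangulated $A_\infty$-category $\Split_\infty(\mathcal{C})$ (equivalently, transported from $\Perf(\A)$ via $\operatorname{Tot}$) — restricts on $H^0(\mathcal{C})$ to the triangles coming from one-step twisted complexes over $\mathcal{C}$. These two structures are defined differently. I would first try to verify compatibility directly: $\operatorname{Tot}$ extends to $\Tw_{\le1}(\mathcal{C})\to\C_\infty(\A)$ and sends standard cones to mapping cones of $A_\infty$-modules, which are the distinguished triangles of $\D(\A)$.
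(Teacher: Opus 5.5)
Your proposal is correct and follows essentially the paper's route. The ``if'' direction is the fact that summands of distinguished triangles are distinguished \cite[Lemma 1.6]{BalmerSchlichting}. For ``only if'' you make explicit what the paper extracts from the construction in \cite[Lemma 1.13]{BalmerSchlichting}: you realise the triangle as a summand of a distinguished triangle whose first map lies in $H^0(\mathcal{C})$, and then, by uniqueness of cones, as a summand of a standard cone triangle in $\Tw_{\le 1}(\mathcal{C})$. Your Step 2 observation that $\Tw$ of $\mathcal{C}\to\Split_\infty(\mathcal{C})$ preserves standard cones already settles the compatibility you list as the main obstacle.

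One slip needs fixing in Step 1. The middle term of your summed triangle is $Y\oplus X'\oplus Y'$, not $Y\oplus Y'$, so choose $Y'$ with $Y\oplus X'\oplus Y'\in H^0(\mathcal{C})$. For example, enlarge $Y'$ by $X$, using that $\Tw_{\le q}\A$ is closed under finite direct sums; otherwise $\tilde f$ need not lift to $\mathcal{C}$.
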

\begin{proof}
According to \cite[Lemma 1.6]{BalmerSchlichting}, a direct summand of a distinguished triangle is distinguished. This proves one direction. For the other direction, note that it follows from the construction in the proof of \cite[Lemma 1.13]{BalmerSchlichting} that any distinguished triangle in $\Split(H^0(\Tw_{\le q} \A)) $
is a direct summand of a standard distinguished triangle (see \zcref{eq:nonstandard}) in $\Tw_{\le 1}(\Tw_{\le q}\A)$. This yields what we want.
\end{proof}

\subsection{Homologically coherent \texorpdfstring{$A_\infty$}{Ainfty}-categories}
Below, we identify a particular class of $A_\infty$-categories with finite perfect dimension.
Recall that a category is \emph{coherent} if its category of finitely
presented right modules is abelian. For a category enriched in graded
abelian groups, one defines \emph{graded coherent} in a similar way.
\begin{definition}
We say that an $A_\infty$-category $\A$ is \emph{homologically coherent} if $H^\ast(\A)$ is graded right coherent. If this is the case, then we define 
$\D_{\tn{fp}}(\A)$ as the full subcategory of $\D(\A)$ spanned by the objects~$X$ for which $H^{*}(X)$ is graded finitely presented over $H^\ast(\A)$.
\end{definition}
\begin{lemma} If $\A$ is homologically coherent, then $\D_{\tn{fp}}(\A)$ is a thick triangulated subcategory of $\D(\A)$.
\end{lemma}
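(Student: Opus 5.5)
The plan is to check the three defining properties of a thick triangulated subcategory: closure under shifts, under cones (equivalently, the two-out-of-three property for distinguished triangles), and under direct summands. All of these can be tested on cohomology. The relevant functor is $H^\ast(-)=\bigoplus_n \operatorname{Hom}_{\D(\A)}(\A(-,?),\Sigma^n -)$, a cohomological functor from $\D(\A)$ to graded right $H^\ast(\A)$-modules. Since $\A$ is homologically coherent, the category $\operatorname{mod}^{\mathrm{gr}} H^\ast(\A)$ of graded finitely presented modules is abelian. It is therefore closed under kernels, cokernels and extensions inside all graded modules. The whole argument reduces to this fact.

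Closure under shifts is immediate: $H^\ast(\Sigma X)$ is the grading shift of $H^\ast(X)$, and a grading shift of a finitely presented module is again finitely presented. Closure under direct summands is also quick. If $X\oplus Y\in\D_{\tn{fp}}(\A)$, then $H^\ast(X)$ is the cokernel of an idempotent endomorphism of the finitely presented module $H^\ast(X\oplus Y)$, namely $1-e$. Hence it is finitely presented, since coherence gives closure under cokernels. Finally, $\D_{\tn{fp}}(\A)$ is a strictly full subcategory, because isomorphic objects have isomorphic cohomology.

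For cones, take a distinguished triangle $X\to Y\to Z\to\Sigma X$ with $X,Y\in\D_{\tn{fp}}(\A)$. Cohomology gives a long exact sequence of graded $H^\ast(\A)$-modules
\[
H^\ast(X)\xrightarrow{a} H^\ast(Y)\to H^\ast(Z)\to H^\ast(\Sigma X)\xrightarrow{\Sigma a} H^\ast(\Sigma Y).
\]
From it we get a short exact sequence
\[
0\to \operatorname{coker}(a)\to H^\ast(Z)\to \ker(\Sigma a)\to 0.
\]
The outer terms are finitely presented because $\operatorname{mod}^{\mathrm{gr}}H^\ast(\A)$ is abelian. The middle term is then finitely presented because finitely presented modules over any ring are closed under extensions, by the horseshoe-type argument on presentations. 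The other two-out-of-three cases follow by rotating the triangle.

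The only point needing care is that $H^\ast(-)$ really lands in graded right $H^\ast(\A)$-modules and is cohomological on $\D(\A)$ in the $A_\infty$ setting. This follows from the Yoneda lemma for $A_\infty$-modules in \cite{Lefevre}: $H^\ast(X)(A)=H^\ast(\mathcal{C}_\infty(\A)(\A(-,A),X))$, and $\operatorname{Hom}$ out of a representable object is cohomological. I expect this identification to be the main technical point, although it is standard. The rest is the coherence argument above.
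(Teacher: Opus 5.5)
Your proposal is correct and follows essentially the same route as the paper: take the long exact cohomology sequence of the triangle, use coherence to get that graded finitely presented modules form an abelian category, and use that finitely presented modules are closed under extensions. You also spell out closure under shifts and direct summands, which the paper simply calls clear.
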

\begin{proof} 
It is clear that $\D_{\tn{fp}}(\A)$  is thick. To prove that it is triangulated, 
we have to show that $\D_{\tn{fp}}(\A)$ is closed under cones. If $Z=\operatorname{cone}(X\rightarrow Y)$ with $X,Y\in \D_{\tn{fp}}(\A)$, then we have a long exact sequence of right $H^\ast(\A)$-modules,
\begin{equation}
\label{eq:fp}
H^\ast(X)\rightarrow H^\ast(Y) \rightarrow H^\ast(Z)\rightarrow \Sigma H^\ast(X) \rightarrow \Sigma H^\ast(Y).
\end{equation}
Since $H^\ast(\A)$ is graded coherent, the finitely presented graded modules form an abelian category. Moreover finitely presented graded modules are closed under extensions. Hence,
\zcref{eq:fp} implies that $H^\ast(Z)$ is coherent.
\end{proof}
\begin{lemma}
Assume that $\A$ is homologically coherent. Then we have the following sequence of categories and functors:
\begin{equation}
\label{eq:bigdiagram}
\begin{tikzcd}
\D_{\tn{fp},\pdim\le q}(\A)\arrow[hookrightarrow]{r}& \langle \A\rangle_{q+1} \arrow[hookrightarrow]{r}&\Perf(\A) \arrow[hookrightarrow]{r}&\D_{\tn{fp}}(\A),
\end{tikzcd}
\end{equation}
where, in addition to the notation introduced in \zcref{def:perfect_dimension} and \zcref{lem:perfect_dimension}, we have that
 $\D_{\tn{fp},\pdim\le q}(\A)$ is the full subcategory of $\D_{\tn{fp}}(\A)$ spanned by objects $X$ such that $H^\ast(X)$ has graded projective dimension $\le q$
over $H^\ast(\A)$.
\end{lemma}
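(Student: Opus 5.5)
We have to show that each of the three inclusions in \zcref{eq:bigdiagram} holds, i.e.\ that every category is a full subcategory of the next one. All of them are full subcategories of $\D(\A)$, so it suffices to compare objects. We treat the inclusions from right to left, since the last one is the easiest.

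\emph{The inclusion $\Perf(\A)\hookrightarrow \D_{\tn{fp}}(\A)$.} Each representable $\A(-,A)$ has cohomology $H^\ast(\A)(-,A)$, which is graded free of rank one and hence finitely presented. Shifts and finite sums of representables therefore lie in $\D_{\tn{fp}}(\A)$. By the preceding lemma, $\D_{\tn{fp}}(\A)$ is a thick triangulated subcategory of $\D(\A)$. Since $\Perf(\A)$ is the thick subcategory generated by $\A$, it is contained in $\D_{\tn{fp}}(\A)$.

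\emph{The inclusion $\langle \A\rangle_{q+1}\hookrightarrow \Perf(\A)$.} This holds by definition, and was already used in the proof of \zcref{lem:perfect_dimension}.

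\emph{The inclusion $\D_{\tn{fp},\pdim\le q}(\A)\hookrightarrow \langle\A\rangle_{q+1}$.} This is the substantive step, and I would prove it by induction on $p=\pdim H^\ast(X)$. The precise claim is that every $X\in\D_{\tn{fp}}(\A)$ with $\pdim H^\ast(X)\le p$ lies in $\langle \A\rangle_{p+1}$.

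First take $p=0$, so that $H^\ast(X)$ is finitely generated graded projective. Then it is a summand of a finite free module $H^\ast(F)$ with $F\in\tn{Free}(\A)$. By Yoneda, the maps $H^\ast(F)\to H^\ast(X)$ and a splitting $H^\ast(X)\to H^\ast(F)$ lift to morphisms $F\to X$ and $X\to F$ in $\D(\A)$. Indeed, morphisms out of a representable are computed on cohomology, so the splitting $X \to F$ is obtained via $X\cong$ a summand of $F$ as follows. The composite $F\to X\to F$ is an idempotent $e$ of $F$ whose image on cohomology is $H^\ast(X)$. The category $\D(\A)$ is Karoubian, since it has countable sums, so $e$ splits; call its image $X'$. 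The induced map $X'\to X$ is a quasi-isomorphism, hence $X\in\tn{smd}(\tn{Free}(\A))=\langle\A\rangle_1$.

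Now the inductive step, with $p\ge1$. Choose a finite free module $H^\ast(F)$ with $F\in\tn{Free}(\A)$ surjecting onto $H^\ast(X)$, and lift this surjection by Yoneda to a map $F\to X$. Complete it to a triangle $Y\to F\to X\to\Sigma Y$. Because $H^\ast(F)\to H^\ast(X)$ is surjective, the long exact sequence breaks into a short exact sequence $0\to H^\ast(Y)\to H^\ast(F)\to H^\ast(X)\to 0$.

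Two facts follow from this sequence. First, $H^\ast(Y)$ is the kernel of a map between finitely presented modules, so coherence of $H^\ast(\A)$ makes it finitely presented, and $Y\in\D_{\tn{fp}}(\A)$. Second, $\pdim H^\ast(Y)\le p-1$. By induction $Y\in\langle\A\rangle_{p}$, and then $X\in \langle\A\rangle_{p}\star\langle\A\rangle_1\subseteq\langle\A\rangle_{p+1}$.

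The main point to check carefully is the Yoneda lifting in the derived category of an $A_\infty$-category. Concretely, one needs that $\operatorname{Hom}_{\D(\A)}(\Sigma^a A,X)\cong H^{-a}(X(A))$, and more generally that maps out of a finite free $F$ are determined by maps of $H^\ast(\A)$-modules out of $H^\ast(F)$. This is the $A_\infty$ Yoneda lemma of \cite{Lefevre}, and it is exactly what makes both the base case and the inductive step work.
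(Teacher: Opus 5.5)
Your overall strategy is the paper's. The two right-hand inclusions follow from thickness. The left-hand one goes by induction on projective dimension: split an idempotent for the base case, and take the cone of a lifted free cover for the inductive step. Your inductive step is fine. Your check, via coherence, that $H^\ast(Y)$ is finitely presented (so that $Y\in\D_{\tn{fp}}(\A)$ and the induction hypothesis applies) is a detail the paper leaves implicit.

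The base case, however, contains a step that fails as written. You claim that, by Yoneda, a splitting $\iota\colon H^\ast(X)\to H^\ast(F)$ of $\alpha\colon H^\ast(F)\twoheadrightarrow H^\ast(X)$ lifts to a morphism $X\to F$ in $\D(\A)$. Yoneda only computes morphisms \emph{out of} shifted representables. It gives you the lift $\tilde\alpha\colon F\to X$ of $\alpha$, but says nothing about morphisms out of $X$, which is exactly the unknown object. Defining $e$ as the composite $F\to X\to F$ is therefore circular: you would already need $X$ to be a summand of $F$ to have the map $X\to F$.

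The fix is to stay on the free object. The map $\iota\alpha$ is a degree-zero idempotent in $\End_{H^\ast(\A)}(H^\ast(F))$. Since $F$ is a finite sum of shifted representables, Yoneda gives a ring isomorphism $\End_{\D(\A)}(F)\cong\End_{H^\ast(\A)}(H^\ast(F))$ induced by $H^\ast$. Hence $\iota\alpha=H^\ast(e)$ for a unique idempotent $e\in\End_{\D(\A)}(F)$. Split $e$ in the idempotent complete category $\D(\A)$ to get $X'\hookrightarrow F$ with $H^\ast(X')=\iota(H^\ast(X))$. The composite $X'\hookrightarrow F\xrightarrow{\tilde\alpha}X$ induces $\alpha|_{\iota H^\ast(X)}$ on cohomology, which is an isomorphism, so $X\cong X'\in\langle\A\rangle_1$.

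This is exactly the paper's base case. It uses only the fact about maps out of finite free objects that you state at the end. With this repair your proof is complete.
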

\begin{proof} The inclusion $\Perf(\A)\hookrightarrow\D_{\tn{fp}}(\A)$ follows from the fact that $\Perf(\A)$ is classically generated by $\A$ (see \cite{BondalVdB}),
$\A\subset \D_{\tn{fp}}(\A)$ and $\D_{\tn{fp}}(\A)$ is thick.

The only other non-standard fact is the inclusion $\D_{\tn{fp},\pdim\le q}(\A)\hookrightarrow  \langle \A\rangle_{q+1}$. We prove this by induction.
To avoid confusion, we will write the Yoneda functor explicitly. 

Note that $H^\ast(\Y(U))=H^\ast(\A)(-,U)$ is a graded projective
 $H^\ast(\A)$-module and that every (finitely generated) projective $H^\ast(\A)$-module is a summand of a (finite) sum of $\Sigma^i H^\ast(\Y(U))$.

First, we consider the case $q=0$. Let $X\in \D(\A)$ be such that $H^\ast(X)$ is a finitely generated projective $H^\ast(\A)$-module.
Then there exist objects
$X_1,\ldots,X_n$ in $\A$ and shifts $j_1,\ldots, j_n$ such that we have maps of graded $H^\ast(\A)$-modules,
\begin{equation}
\label{eq:decomp}
\bigoplus_i \Sigma^{j_i}H^\ast(\Y(X_i))\overset{\alpha}{\twoheadrightarrow} H^\ast(X) \hookrightarrow \bigoplus_i \Sigma^{j_i}H^\ast(\Y(X_i)),
\end{equation}
such that the composition is an idempotent $e$ in
\[
\End_{H^\ast(\A)}(\bigoplus_i \Sigma^{j_i}H^\ast(\Y(X_i))=H^0(\bigoplus_{i,k}
{\tn{Free}(\A)}(\Sigma^{j_i}X_i,\Sigma^{j_k}X_k))=\End_{\D(\A)}(\bigoplus_i \Sigma^{j_i}\Y(X_i)).
\]
Likewise, the map $\alpha$ in \zcref{eq:decomp} corresponds to an element of 
\[
\operatorname{Hom}_{\D(\A)}(\bigoplus_i \Sigma^{j_i}\Y(X_i),X).
\]
Let $Z\in \D(\A)$ be the image of $e$ (recall that $D(\A)$ is idempotent complete by e.g. \cite[Lemma 4.19]{Rouquier}). Applying $H^\ast(-)$ to 
\[
Z\hookrightarrow \bigoplus_i \Sigma^{j_i}\Y(X_i) \xrightarrow{\alpha} X
\]
shows that the composition $Z\rightarrow X$ is a quasi-isomorphism. Hence
$X\cong Z\in \langle \A\rangle_1$.

\medskip

Assume now that $q>0$ and let $X\in \D_{\tn{fp}, \pdim\le q}(\A)$. Then  there exist objects
$X_1,\ldots,X_n$ in $\A$ and shifts $i_1,\ldots,i_n$ such that we have a surjective map of graded $H^\ast(\A)$-modules
\[
\bigoplus_i \Sigma^{j_i}H^\ast(\Y(X_i))\overset{\alpha}{\twoheadrightarrow} H^\ast(X).
\]
As above, $\alpha$ corresponds to a map $\bigoplus_i \Sigma^{j_i}\Y(X_i) \xrightarrow{\alpha} X$. Consider the corresponding distinguished triangle:
\begin{equation}
\label{eq:induction}
Y\rightarrow \bigoplus_i \Sigma^{j_i}\Y(X_i) \xrightarrow{\alpha} X \rightarrow \Sigma Y.
\end{equation}
Applying $H^\ast(-)$ yields a short exact sequence:
\[
0\rightarrow H^\ast(Y)\rightarrow \bigoplus_i \Sigma^{j_i}H^\ast(\Y(X_i)) \xrightarrow{\alpha} H^\ast(X)\rightarrow 0.
\]
We obtain that $\operatorname{pdim} H^\ast(Y)\le q-1$ and, by induction, we may assume that $Y\in \langle \A\rangle_q$. Since $X=\operatorname{cone}(Y\rightarrow \bigoplus_i \Y(X_i))$, we conclude that $X\in \langle \A\rangle_{q+1}$.
\end{proof}
We now obtain the following corollary to this lemma.

\begin{proposition}  \label{prop - Tot qequiv} Let $\A$ be a homologically coherent $A_\infty$-category
such that $H^\ast(\A)$ has finite global dimension~$q$.
Inside $\D(\A)$, we then have
\[
\D_{\tn{fp}}(\A)=\Perf(\A).
\]
Moreover, $\A$ has perfect dimension $\le q$.
\end{proposition}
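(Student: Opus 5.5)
The plan is to read both claims off the chain of inclusions \zcref{eq:bigdiagram}. Everything hinges on showing that the first inclusion, $\D_{\tn{fp},\pdim\le q}(\A)\hookrightarrow \D_{\tn{fp}}(\A)$, is an equality when $H^\ast(\A)$ has global dimension $q$.

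\emph{Step 1: every object of $\D_{\tn{fp}}(\A)$ has projective dimension $\le q$.} Let $X\in\D_{\tn{fp}}(\A)$. By definition $H^\ast(X)$ is a graded finitely presented right $H^\ast(\A)$-module. Since $H^\ast(\A)$ has right global dimension $q$, the graded projective dimension of $H^\ast(X)$ is at most $q$, so $X\in \D_{\tn{fp},\pdim\le q}(\A)$. I expect this to be the only point needing care, namely matching conventions. Global dimension should be read in the graded sense, or one notes that graded projective dimension is bounded by the ungraded one. Moreover, because $H^\ast(\A)$ is graded coherent, the kernels in a resolution of a finitely presented module are again finitely presented. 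A projective resolution of length $\le q$ can therefore be chosen with finitely generated projective terms. This is what the induction in the proof of the preceding lemma implicitly uses when it applies the induction hypothesis to $Y$: there $H^\ast(Y)$ is a kernel of a map between finitely presented modules, hence finitely presented by coherence, so $Y$ lies in $\D_{\tn{fp}}(\A)$.

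\emph{Step 2: conclude.} By Step 1, the composite inclusion $\D_{\tn{fp},\pdim\le q}(\A)\hookrightarrow \D_{\tn{fp}}(\A)$ in \zcref{eq:bigdiagram} is an equality of full subcategories of $\D(\A)$. All the maps in \zcref{eq:bigdiagram} are inclusions of full subcategories, so each intermediate inclusion is then also an equality. In particular
\[
\langle \A\rangle_{q+1}=\Perf(\A)=\D_{\tn{fp}}(\A).
\]
The second equality is the first claim. The equality $\langle \A\rangle_{q+1}=\Perf(\A)$ says exactly that the inclusion in \zcref{def:perfect_dimension} is an equivalence, i.e.\ that $\A$ has perfect dimension $\le q$.
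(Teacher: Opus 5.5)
Your proposal is correct and matches the paper's proof. The paper's argument is a single line: finite global dimension $q$ forces $\D_{\tn{fp},\pdim\le q}(\A)=\D_{\tn{fp}}(\A)$, so the chain of inclusions in the preceding lemma collapses. Your added remark, that coherence keeps the syzygy $Y$ inside $\D_{\tn{fp}}(\A)$ during that lemma's induction, just makes explicit a step the paper leaves implicit.
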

\begin{proof} The hypotheses imply $\D_{\tn{fp},\pdim \le q}(\A)=\D_{\tn{fp}}(\A)$ so that the whole sequence \zcref{eq:bigdiagram} collapses! This yields what we want.
\end{proof}

Under suitable conditions, we can avoid the idempotent completion $\Split(-)$ in \zcref{lem:perfect_dimension}.

\begin{definition}
Let $\A$ be a graded category (i.e.\ $\A$ is enriched in graded abelian groups).
We say that $\A$ has \emph{free fp-projectives} if every finitely presented graded projective $\A$-module is a finite sum of shifts of representable functors.
\end{definition}
\begin{corollary} \label{cor:pretriang}
Let $\A$ be a homologically coherent $A_\infty$-category
  such that $H^\ast(\A)$ has finite  global dimension~$q$. Assume in addition that $H^\ast(\A)$ has free fp-projectives.  Then $\Tw_{\le q} \A$ is pretriangulated and 
there is an equivalence of triangulated categories
\begin{equation}
\label{eq:nosplit}
H^0(\operatorname{Tot}):H^0(\Tw_{\le q} \A) \rightarrow \Perf(\A).
\end{equation}
\end{corollary}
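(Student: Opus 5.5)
The plan is to show that the essential image of $H^0(\operatorname{Tot})$ restricted to $H^0(\Tw_{\le q}\A)$ is already all of $\Perf(\A)$, without passing to summands. Once that holds, \zcref{eq:spliteq} restricts to an equivalence $H^0(\Tw_{\le q}\A)\to\Perf(\A)$. Full faithfulness comes for free: $\operatorname{Tot}$ is fully faithful on $H^0$ of all of $\Tw\A$, being the composite of the canonical embedding into $\C_\infty(\A)$ with the Yoneda functor, and $\Tw_{\le q}\A$ is a full subcategory. Pretriangulatedness then follows formally. The target $\Perf(\A)$ is triangulated, and the equivalence identifies $H^0(\Tw_{\le q}\A)$ with a triangulated category. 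Moreover, every object of $\Tw\A$ built from objects of $\Tw_{\le q}\A$ by cones has $\operatorname{Tot}$ lying in $\Perf(\A)$, so it is isomorphic in $H^0(\Tw\A)$ to an object of $\Tw_{\le q}\A$. From this we get that $\Tw_{\le q}\A\to\Tw(\Tw_{\le q}\A)$ is a quasi-equivalence.

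For essential surjectivity, take $X\in\Perf(\A)=\D_{\tn{fp}}(\A)$ (by \zcref{prop - Tot qequiv}). Then $H^\ast(X)$ is finitely presented of projective dimension $\le q$. Using coherence, choose a finite resolution
\[
0\to P_q\to\cdots\to P_0\to H^\ast(X)\to 0
\]
by finitely presented projectives. Coherence matters here: syzygies of finitely presented modules over a coherent ring are again finitely presented, and the last syzygy is projective because the global dimension is $\le q$. By the free fp-projectives hypothesis, each $P_i$ is a finite sum of shifts of representables, with no summand needed.

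I then redo the induction from the proof of the lemma giving \zcref{eq:bigdiagram}, now tracking the construction precisely. Lift $P_0\to H^\ast(X)$ to a map $\bigoplus\Sigma^{j}\Y(X_i)\to X$ and take the triangle \zcref{eq:induction}. Its fibre $Y$ has $H^\ast(Y)$ finitely presented, with a free resolution of length $q-1$ of the same type. By induction, $Y\cong\operatorname{Tot}(T)$ for some $T\in\Tw_{\le q-1}\A$. In the base case $q=0$, the module $H^\ast(X)$ is itself free, and the argument of the lemma gives $X\cong\bigoplus\Sigma^{j}\Y(X_i)$ directly, with no idempotent required.

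The remaining step is to see that $X$, being a cone of a map $\operatorname{Tot}(T)\to\operatorname{Tot}(F)$ with $F$ a free object of $\operatorname{Free}(\A)$, is $\operatorname{Tot}$ of a twisted complex with one more block. Since $\operatorname{Tot}$ is fully faithful on $H^0$, the map lifts to a closed degree-$0$ morphism $f:T\to F$ in $\Tw\A$. The cone \zcref{eq:cone} then has underlying object $\Sigma T\oplus F$ and differential $\begin{pmatrix}-\delta_T&0\\ \sigma^{-1}f&0\end{pmatrix}$. This is strictly lower triangular with at most $(q-1)+1+1=q+1$ blocks, so it lies in $\Tw_{\le q}\A$. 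Finally, $\operatorname{Tot}$ preserves cones up to isomorphism, since it is a strict $A_\infty$-functor into a pretriangulated category and cones are defined by the Maurer--Cartan formalism. Hence $X\cong\operatorname{Tot}(C(f))$.

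I expect the main obstacle to be the bookkeeping in this last step. One must check that the block count really stays $\le q+1$, i.e.\ that the cone of a length-$(q-1)$ complex mapping to a single free block has length $\le q$ in the sense of Definition \ref{truncated}. One must also check the compatibility $\operatorname{Tot}(C(f))\cong\operatorname{cone}(\operatorname{Tot} f)$ in $\D(\A)$. If the block-count convention turns out to give an off-by-one, I would instead resolve from the other end: start the induction from the projective $P_q$ and build the complex by successive cones onto free objects.
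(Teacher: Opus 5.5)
Your proposal is correct and follows essentially the paper's proof. The paper likewise reruns the induction behind \eqref{eq:bigdiagram} with the base case modified via free fp-projectives, so that no idempotent is needed, landing in $\tn{Free}(\A)\star\cdots\star\tn{Free}(\A)$ ($q+1$ factors); it then observes that $H^0(\operatorname{Tot})$ identifies $H^0(\Tw_{\le q}\A)$ with this category, which is exactly the cone and block-count bookkeeping you carry out explicitly, and it deduces pretriangulatedness of $\Tw_{\le q}\A$ from that of $\Perf(\A)$, as you do.
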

\begin{proof}
  Put $\A^{\circ q+1} = \tn{Free}(\A) \star \cdots\star \tn{Free}(\A)$ with $q+1$ factors (cf. \zcref{lem:perfect_dimension}). Then the initial step of the proof of the leftmost inclusion in \zcref{eq:bigdiagram} can be modified to obtain
\[
\D_{\tn{fp},\pdim\le q}(\A)\hookrightarrow \A^{\circ q+1}.
\]
Since \zcref{eq:bigdiagram} collapses, we obtain an equivalence $\A^{\circ q+1}\rightarrow \Perf(\A)$. We now obtain \eqref{eq:nosplit} from the observation
that $H^0(\operatorname{Tot})$ induces an equivalence $H^0(\Tw_{\le q} \A)  \cong \A^{\circ q+1}$.
Since $\Perf(\A)$ (with its enhancement inherited from $\C_\infty(\A)$) is pretriangulated, so is $\Tw_{\le q} \A$.
\end{proof}

\section{A triangulated equivalence for \texorpdfstring{$N$}{N}-formal \texorpdfstring{$A_{\infty}$}{Ainfty}-categories} \label{section - 2}
\subsection{\texorpdfstring{$N$}{N}-formality}
Recall that an $A_{\infty}$-category $\A$ is \ti{formal} if there is an $A_\infty$-morphism $f:\A\rightarrow (H^\ast(\A),m_2)$ such that $H^\ast(f_1)$ is the identity on $H^\ast(\A)$.
Likewise, we make the following definition:
\begin{definition} \label{def - N-formal}
    Let $N > 2$. An $A_{\infty}$-category $\A$ is $N$-\ti{formal} if there is an $A_\infty$-morphism $f:\A\rightarrow (H^\ast(\A),(m_n)_n)$ 
such that $H^\ast(f_1)$ is the identity on $H^\ast(\A)$ and such that $m_3,\ldots,m_N=0$.
\end{definition}

% 2.1 - THE UNDIRECTED CASE
\subsection{The undirected case} \label{subsection - 2.1} 

In this section, we prove the triangulated equivalence of 
\zcref{thm - undirected}.

% 2.1.1 N-FORMALITY

Let $f$ be as in \zcref{def - N-formal}. We will be slightly sloppy by viewing $f$ as an $A_M$-functor for any $M$. Let $f_{\le N}$ be the $A_N$-functor that is the composition
\[
\mathcal{A} \xrightarrow[A_\infty]{f}  (H^\ast(\A),(m_n)_n) \xrightarrow[A_N]{\Id}  (H^\ast(\A),m_2),
\]
where $\Id$ is the strict $A_N$-functor (cf. \zcref{def:functors}) that is the identity on objects and morphisms. By \zcref{prop - functoriality *q and Twq}, $f_{\le N}$ induces an $A_{\lfloor \frac{N-q}{q+1} \rfloor}$-functor
\begin{equation} \label{eq - Am-functor}
    \Tw_{\leq q} f_{\le N}: \Tw_{\leq q} \A \to \Tw_{\leq q}(H^{*}(\A),m_2).
\end{equation}
Also by \zcref{prop - functoriality *q and Twq}, this $A_{\lfloor \frac{N-q}{q+1} \rfloor}$-functor may be written as the composition
\begin{equation}
\label{eq:ANfactorization}
\Tw_{\leq q} \A \xrightarrow[A_\infty]{\Tw f} \Tw_{\leq q}(H^\ast(\A),(m_n)_n)\xrightarrow[A_{\lfloor \frac{N-q}{q+1} \rfloor}]{\Id}  \Tw_{\leq q}(H^\ast(\A),m_2)
\end{equation}
since clearly $\Tw_{\le q} \Id=\Id$.

The following result was stated in the introduction.
\begin{thm}\label{thm - undirected}
    Let $\mc{A}$ be an $N$-formal $k$-linear $A_{\infty}$-category for $N \geq 3$.
Assume furthermore that $H^\ast(\A)$ is right graded coherent
and that it has right finite  global dimension~$q$.
 Then there is an equivalence of triangulated categories
\begin{equation}    
\label{eq:body}
\D_{\tn{fp}}(\mc{A}) \cong \D_{\tn{fp}}(H^{*}(\mc{A})) \quad \tn{ if } N \geq 8q+7,
\end{equation}
which moreover extends the identity isomorphism between the full subcategories
of $\D_{\tn{fp}}(\A)$ and $\D_{\tn{fp}}(H^\ast(A))$ spanned by the shifts of the objects in $\A$ and $H^\ast(\A)$.
\end{thm}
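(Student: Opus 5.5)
We reduce both sides to truncated twisted complexes and compare them through the $A_{\lfloor \frac{N-q}{q+1}\rfloor}$-functor $\Tw_{\le q} f_{\le N}$ of \zcref{eq - Am-functor}. Since $H^\ast(\A)$ is graded coherent of global dimension $q$, \zcref{prop - Tot qequiv} applies both to $\A$ and to the formal $A_\infty$-category $(H^\ast(\A),m_2)$, because the two have the same cohomology. Combined with \zcref{lem:perfect_dimension}, this gives triangulated equivalences
\[
\D_{\tn{fp}}(\A)=\Perf(\A)\cong \Split(H^0(\Tw_{\le q}\A)),\qquad \D_{\tn{fp}}(H^\ast(\A))\cong \Split(H^0(\Tw_{\le q}(H^\ast(\A),m_2))).
\]
It therefore suffices to produce a triangulated equivalence between $\Split(H^0(\Tw_{\le q}\A))$ and $\Split(H^0(\Tw_{\le q}(H^\ast(\A),m_2)))$ that is the identity on shifts of objects of $\A$.

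\textbf{Step 1: $H^0$ of the comparison functor is an equivalence.} By \zcref{eq:ANfactorization}, $\Tw_{\le q} f_{\le N}$ is $\Tw_{\le q} f$, an $A_\infty$-quasi-equivalence because $f_1$ is a quasi-isomorphism, followed by the strict identity $A_{\lfloor \frac{N-q}{q+1}\rfloor}$-functor $\Tw_{\le q}(H^\ast(\A),(m_n)_n)\to\Tw_{\le q}(H^\ast(\A),m_2)$. As soon as $\lfloor \frac{N-q}{q+1}\rfloor\ge 2$, the identity functor is an isomorphism on $H^0$. Indeed, on $\Tw_{\le q}$ the twisted $m_1,m_2$ only involve $m_j$ with $j\le 2+2(q+1)\le N$, so they coincide for $(m_n)_n$ and for $m_2$ alone. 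The same holds for the Maurer--Cartan condition: a $\delta$ with $\le q+1$ blocks only sees $m_j$ for $j\le q+1\le N$. Hence the identity on objects gives an isomorphism of graded categories $H^\ast(\Tw_{\le q}(H^\ast(\A),(m_n)_n))=H^\ast(\Tw_{\le q}(H^\ast(\A),m_2))$. It extends to idempotent completions and is the identity on shifts of objects of $\A$.

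\textbf{Step 2: exactness.} This is the main obstacle. Exactness must be checked after idempotent completion, where triangles involve objects of $\Tw_{\le 1}(\Tw_{\le q}\,\cdot)$. By \zcref{lem:splitlemma}, a triangle is distinguished if and only if it is a summand of one that is distinguished in $H^0(\Tw_{\le1}\Tw_{\le q})$. By the Massey product characterization in \zcref{appendix}, distinguishedness there is governed by $m_1,m_2,m_3$ of $\Tw_{\le 1}\Tw_{\le q}$. I would realize the twofold twisting as living inside $\Tw_{\le 2q+1}$. Then $m_3$ on $\Tw_{\le 2q+1}$ involves underlying operations $m_j$ with $j\le 3+4(2q+2)=8q+11$; more carefully, only strings whose $\delta$-insertions fit the lower-triangular length contribute, which should bring the bound down to $j\le 8q+7$. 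Under $N\ge 8q+7$ all these $m_j$ for $3\le j\le N$ vanish in $(H^\ast(\A),(m_n)_n)$, so $m_1,m_2,m_3$ agree with those of $(H^\ast(\A),m_2)$. Equivalently, the identity is an $A_3$-functor on $\Tw_{\le1}\Tw_{\le q}$, which requires $\lfloor\frac{N-(2q+1)}{2q+2}\rfloor\ge 3$, i.e.\ $N\ge 8q+7$, matching the bound. An $A_3$-functor preserves Massey-product descriptions of cones, and hence distinguished triangles. Summands of triangles are preserved trivially.

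Composing the equivalence coming from the $A_\infty$-quasi-equivalence $\Tw_{\le q} f$ (exact, after $\Split_\infty$) with the identification of Steps 1 and 2 then yields \zcref{eq:body}. Since $f_1$ induces the identity on cohomology, the composite extends the identity on shifts of objects of $\A$, which gives the last assertion of \zcref{thm - undirected}.
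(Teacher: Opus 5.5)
Your proposal is correct and follows the paper's proof. Both identify the two sides with $\Split(H^0(\Tw_{\le q}\,\cdot\,))$, and both get exactness of the identity from the split-closure lemma plus the Massey-product criterion, which requires the identity to be an $A_3$-functor on $\Tw_{\le 1}\Tw_{\le q}$.

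The only difference is bookkeeping. The paper applies the functoriality proposition for $\Tw_{\le q}$ twice ($M=\lfloor\frac{N-q}{q+1}\rfloor\ge 7$, then $\lfloor\frac{M-1}{2}\rfloor\ge 3$) rather than flattening into $\Tw_{\le 2q+1}$. This spares it the sign-laden identification $\Tw_{\le 1}\Tw_{\le q}\subset\Tw_{\le 2q+1}$, which you only sketch. Both routes give exactly $N\ge 8q+7$. One small correction: in Step 1 the twisted $m_2$ on $\Tw_{\le q}$ involves $m_j$ with $j\le 3q+2$, not $2+2(q+1)$; this is harmless, since your stated criterion $\lfloor\frac{N-q}{q+1}\rfloor\ge 2$ is the right one.
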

\begin{proof}
Assume $N\geq 8q+7$. This implies 
\[
M:= \left \lfloor \frac{N-q}{q+1} \right \rfloor \geq 7
\]
and
\[
 \left \lfloor \frac{M-1}{1+1} \right \rfloor \geq 3.
\]
In other words, by \zcref{prop - functoriality *q and Twq}, the identity is an $A_3$-functor
\[
\Tw_{\le 1} (\Tw_{\le q} (H^\ast(\A), (m_n)_n)) \rightarrow \Tw_{\le 1} (\Tw_{\le q} (H^\ast(\A), m_2)),
\]
meaning it respects $m_2$ and $m_3$. It follows from \zcref{prop - Massey Ainfty} below that it preserves distinguished triangles. Hence, by \zcref{lem:splitlemma},
it induces an isomorphism of triangulated categories
\[
\Split(H^0(\Tw_{\leq q}(H^\ast(\A),(m_n)_n)))\xrightarrow{\Id} \Split(H^0(\Tw_{\leq q}(H^\ast(\A),m_2))).
\]
Consider the following commutative diagram
\begin{equation}
\begin{tikzcd}
\Split (H^0(\Tw_{\leq q} \A ))\arrow[d, "\cong"]\arrow[r,"{\cong}"', "{H^0(\Tw_{\le q} f)}"] & \Split(H^0(\Tw_{\leq q}(H^\ast(\A),(m_n)_n)))
\arrow[d, "\cong"]\arrow[r, "{\Id}"] &\arrow[d, "\cong"] \Split(H^0(\Tw_{\leq q}(H^\ast(\A),m_2)))\\
\D_{\fp}(\A)\arrow[r, dotted] & \D_{\fp}(H^\ast(\A), (m_n)_n)\arrow[r, dotted]& \D_{\fp}(H^\ast(\A), m_2),
\end{tikzcd}
\end{equation}
where the vertical arrows are derived from \zcref{prop - Tot qequiv} and \zcref{lem:perfect_dimension}. We now obtain \zcref{eq:body} as the composition
of the dotted arrows.
\end{proof}

By strengthening the hypotheses of \zcref{thm - undirected}, we can relax the bound in \eqref{eq:body}.
\begin{thm} \label{thm - undirected improved} Let the hypotheses be as in \zcref{thm - undirected}, and assume in addition that $H^\ast(\A)$ has free fp-projectives. Then \zcref{eq:body} holds
with the bound $N\geq 4q+3$.
\end{thm}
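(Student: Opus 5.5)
The plan is to rerun the proof of \zcref{thm - undirected}, but to use \zcref{cor:pretriang} in place of \zcref{lem:perfect_dimension} and \zcref{lem:splitlemma}. This removes the outer layer $\Tw_{\le 1}(-)$ that was only needed to control distinguished triangles in the idempotent completion. The free fp-projectives hypothesis concerns $H^\ast(\A)$, which is also the cohomology of both $(H^\ast(\A),(m_n)_n)$ and $(H^\ast(\A),m_2)$. The coherence and global dimension hypotheses are likewise shared by all three. Hence \zcref{cor:pretriang} applies to all three $A_\infty$-categories $\A$, $(H^\ast(\A),(m_n)_n)$ and $(H^\ast(\A),m_2)$. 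For each of them, $\Tw_{\le q}(-)$ is pretriangulated and $H^0(\operatorname{Tot})\colon H^0(\Tw_{\le q}(-))\to \Perf(-)=\D_{\tn{fp}}(-)$ is a triangulated equivalence; the last equality is \zcref{prop - Tot qequiv}.

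Next I would use the factorization \zcref{eq:ANfactorization}. Its first arrow $\Tw_{\le q} f$ is an honest $A_\infty$-functor, and it is a quasi-equivalence because $f$ is a quasi-isomorphism, which can be checked by filtering by the length of the twisted complexes. Since both sides are pretriangulated, $H^0(\Tw_{\le q} f)$ is an exact equivalence. The second arrow is the identity, viewed as an $A_M$-functor with $M=\lfloor \frac{N-q}{q+1}\rfloor$. The assumption $N\ge 4q+3$ gives $N-q\ge 3(q+1)$, so $M\ge 3$. Thus the identity $\Tw_{\le q}(H^\ast(\A),(m_n)_n)\to \Tw_{\le q}(H^\ast(\A),m_2)$ is an $A_3$-functor and respects $m_1$, $m_2$ and $m_3$. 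It is bijective on objects and the identity on morphism spaces, so $H^0(\Id)$ is an isomorphism of categories commuting with the shift. It then remains to see that this isomorphism is exact.

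That exactness is where the real content lies. Both $\Tw_{\le q}(-)$ are now pretriangulated in their own right, so their distinguished triangles are exactly those whose images in $H^0(\Tw(\Tw_{\le q}(-)))$ are distinguished, as in \zcref{def - cone}. By \zcref{prop - Massey Ainfty}, whether a triangle in a pretriangulated $A_\infty$-category is distinguished is determined by $m_1,m_2,m_3$ alone, through Massey products, without passing to $\Tw_{\le 1}$ of the category. An $A_3$-functor that is the identity therefore preserves and reflects distinguished triangles. This is precisely the step where we save a factor of two compared with the bound $8q+7$. The main thing to check is that \zcref{prop - Massey Ainfty} is stated (or can be stated) intrinsically for a pretriangulated category, needing only $A_3$-compatibility on that category itself. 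If it instead requires the functor on $\Tw_{\le 1}$, I would argue as follows. Every object of $\Tw_{\le 1}(\Tw_{\le q}(-))$ is isomorphic in $H^0$ to one in $\Tw_{\le q}(-)$, because the latter is pretriangulated. So it suffices to test the Massey product condition on triangles with vertices in $\Tw_{\le q}(-)$, where only $m_{\le 3}$ of $\Tw_{\le q}(-)$ enters.

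Finally I would assemble the commutative diagram of the previous proof, without $\Split$. Composing the two top equivalences and transporting along the vertical $H^0(\operatorname{Tot})$ equivalences yields \zcref{eq:body}. It extends the identity on shifts of objects of $\A$, since $\Tw_{\le q} f$ and $\Id$ act as the identity on objects, and $H^\ast(f_1)$ is the identity on morphisms between them.
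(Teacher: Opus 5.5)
Your proposal is correct and takes essentially the same route as the paper. You apply \zcref{cor:pretriang} to all three categories to drop the split closures and the outer $\Tw_{\le 1}$, use $N\ge 4q+3$ to make $\Id$ on $\Tw_{\le q}$ an $A_3$-functor, and conclude exactness from \zcref{prop - Massey Ainfty}, which is stated intrinsically for an arbitrary $A_\infty$-category $\B$ and so applies directly to $\Tw_{\le q}(H^\ast(\A),(m_n)_n)$ and $\Tw_{\le q}(H^\ast(\A),m_2)$, making your fallback unnecessary.
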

\begin{proof} If $H^\ast(\A)$ has free fp-projectives, then $\Tw_{\le q} \A$, $\Tw_{\le q}(H^\ast(\A),(m_n)_n)$ and $\Tw_{\le q}(H^\ast(\A),m_2)$ are pretriangulated and
idempotent complete by \zcref{cor:pretriang}. Hence, there is no need to take split closures in the proof of \zcref{thm - undirected}, and it is enough
that $\tn{Id}: \Tw_{\le q} (H^\ast(\A), (m_n)_n)\rightarrow \Tw_{\le q}(H^\ast(\A), m_2)$ is an $A_3$-functor. This follows from the fact
that $N\ge 4q+3$ implies that $\lfloor (N-q)/(q+1)\rfloor \ge 3$.
\end{proof}

% 2.2 - THE DIRECTED CASE
\subsection{The directed case}
In this section, we improve the lower bound on $N$ in \zcref{thm - undirected} by restricting to the directed setup, thereby obtaining \zcref{thm - directed}.

% 2.2.1 - DIRECTED GRADED CATEGORIES
\begin{definition}
\label{def:directed}
Let $K$ be a field extension of $k$. A $K$-linear graded category ${{\A}}$ is said to be \emph{directed} of length $l$ if $|\Ob({{\A}})|<\infty$, $\dim_K {{\A}}(A,B)<\infty$ for all $A,B\in \Ob({{\A}})$,
and if there is a decomposition $\Ob({{\A}})=O_0\coprod\cdots \coprod O_{l}$ such that
\begin{enumerate}
\item if $A\in O_i$, $B\in O_j$, $j<i$, then ${{\A}}(A,B)=0$;
\item all non-zero maps between objects in $O_i$ for $0 \leq i \leq l$ are isomorphisms. 
\end{enumerate}
We refer to the $(O_i)_i$ as \emph{blocks}.
\end{definition}
\begin{remark} We consider blocks and the field $K$ to be part of the structure of $\A$, but we will usually mention them only when necessary.
\end{remark}
\begin{lemma} \label{lem:blockform}
Let ${{\A}}$ be a minimal $A_\infty$-category such that $H^\ast({{\A}})$ is directed\footnote{By definition, $H^\ast({{\A}})$ is linear over a field $K$. However
  we do not assume that the higher operations on $\A$ are $K$-linear. Following our general conventions (see \zcref{sec:notation_and_conventions}) we assume of course that the
  higher operations are $k$-linear.} of length $l$. Let $M\in \D_{\fp}({{\A}})$ be such that $H^\ast(M(X))=0$ for 
$X\in \bigcup_{i\ge k} O_{i}$.
Then $M$ is isomorphic to an object of the form $\tn{Tot}((A,\delta))$, where $A=\bigoplus_{i=0}^{k-1} A_i$ and $A_i$ is a finite sum of $\Sigma^j {{\A}}(-,B)$ for $j\in \mathbb{Z}$, $B\in O_i$
such that in addition $\delta_{ij}:A_j\rightarrow A_i$ is zero for $j\ge i$.
\end{lemma}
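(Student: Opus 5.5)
We argue by induction on $k$. For $k=0$ the hypothesis says $H^\ast(M(X))=0$ for all objects $X$, so $M\cong 0$. This is $\tn{Tot}$ of the empty twisted complex.

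For the inductive step, let $M\in\D_{\fp}(\A)$ with $H^\ast(M(X))=0$ for $X\in\bigcup_{i\ge k}O_i$. We aim to peel off the block $O_{k-1}$. Put $V:=H^\ast(M)|_{O_{k-1}}$, i.e. the restriction of the graded $H^\ast(\A)$-module $H^\ast(M)$ to objects in $O_{k-1}$.

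Since $H^\ast(M)$ is finitely presented and all hom-spaces are finite dimensional over $K$, $V$ is finite dimensional over $K$ objectwise. By condition (2) of \zcref{def:directed}, the full subcategory on $O_{k-1}$ is a groupoid-like graded category in which every non-zero map is invertible. Hence every graded module over it is a sum of shifts of representables, i.e. semisimple.

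We therefore choose a finite sum $A_{k-1}=\bigoplus\Sigma^{j}\A(-,B)$ with $B\in O_{k-1}$, together with a map of graded modules $\phi:H^\ast(A_{k-1})\to H^\ast(M)$ that is an isomorphism on $O_{k-1}$. By Yoneda, $\phi$ lifts to a morphism $\alpha:A_{k-1}\to M$ in $\D(\A)$. This works because $H^0\operatorname{Hom}(\Sigma^j\Y(B),M)=H^{-j}(M(B))$, using that $\A$ is minimal, or more simply working in $\D(\A)$.

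Let $M'$ be the cone of $\alpha$, or its fibre, chosen so that $M$ is an extension of $A_{k-1}$ and $M'$. Using the long exact sequence, we check that $M'$ vanishes on $O_{k-1}$, and also on higher blocks. The latter holds because $\A(X,B)=0$ for $X\in O_i$ with $i>k-1$, by condition (1), so $A_{k-1}$ vanishes there. The vanishing on $O_{k-1}$ is the delicate point: $\phi$ must be an isomorphism there, and not merely surjective, so that the long exact sequence gives zero.

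Also $M'\in\D_{\fp}(\A)$, since $\D_{\fp}$ is triangulated. By induction, $M'\cong\tn{Tot}((A',\delta'))$ with $A'=\bigoplus_{i<k-1}A_i$ and $\delta'$ strictly lower triangular in the required sense.

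Now $M\cong\operatorname{cone}(\Sigma^{-1}M'\to A_{k-1})$ up to shift conventions. We represent the connecting map $\Sigma^{-1}\tn{Tot}(A',\delta')\to \Y(A_{k-1})$ at the level of twisted complexes by a closed degree-zero morphism in $\Tw\A$. This is possible because $\Tw\A\to\C_\infty(\A)$ is fully faithful on cohomology via $\tn{Tot}$. The cone formula \zcref{eq:cone} then gives a twisted complex on $A'\oplus A_{k-1}$ whose new off-diagonal entries go from $A'$ to $A_{k-1}$. These are components $\delta_{k-1,j}$ with $j<k-1$, so $\delta_{ij}=0$ for $j\ge i$ is preserved, and $\tn{Tot}$ of the cone is isomorphic to $M$.

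The main obstacle is twofold. First, the orientation must be arranged correctly: the cone has to place $A_{k-1}$ as the target of the new entries, matching $\delta_{ij}:A_j\to A_i$ with $i>j$. Second, we must verify the lifting of the connecting morphism to $\Tw\A$, which uses full faithfulness of $H^\ast(\tn{Tot})$. We also need to handle the objectwise finiteness of $V$ carefully when $O_{k-1}$ contains several non-isomorphic objects; this is fine because $O_{k-1}$ is finite.
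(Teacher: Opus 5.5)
Your proposal is correct and follows essentially the same route as the paper. You induct on $k$ and peel off the top block $O_{k-1}$ via a map from a finite sum of shifted representables on $O_{k-1}$ that is an isomorphism on cohomology there. You then apply the induction hypothesis to the third vertex of the resulting triangle, and reassemble by lifting the connecting map to a closed degree-zero morphism in $\Tw\A$ and using the cone formula \eqref{eq:cone}. The differences are cosmetic: you take the cone of $\alpha$ rather than the fibre $N$, and a chosen basis rather than the canonical map $\bigoplus_p\operatorname{Hom}^\ast(\Y(B_p),M)\otimes_{K_p}\Y(B_p)\to M$. Your remark that the third vertex stays in $\D_{\mathrm{fp}}(\A)$, which holds because $H^\ast(\A)$ is finite-dimensional over $K$ and hence coherent, makes explicit a point the paper leaves implicit.
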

\begin{proof}
We use descending induction on $k$. 
If $k=0$, then $M=0$ and there is nothing to prove.

So assume $k>0$. Let $B_1,\ldots,B_u$ be representatives of the isomorphism classes of objects in 
$O_{k-1}$. We have ${{\A}}(B_p, B_q)=0$ if $p\neq q$, and $K_p:={{\A}}(B_p,B_p)$ is a field.
Then $H^\ast(M(B_p))=\operatorname{Hom}^\ast_{\D({{\A}})}(\Y(B_p),M)$ is a finite
dimensional graded
$K_p$-vector space.

For each $p$, there is an induced map $\operatorname{Hom}^\ast_{\D({{\A}})}(\Y(B_p),M)
\otimes_{K_p} \Y(B_p)\xrightarrow{\alpha_p}
M$. Consider the distinguished triangle in $\D({{\A}})$:
\begin{equation}
\label{eq:blocks}
N\xrightarrow{\beta} \bigoplus_p\operatorname{Hom}^\ast_{\D({{\A}})}(\Y(B_p),M)
\otimes_{K_p} \Y(B_p) \xrightarrow{(\alpha_p)_p} M\rightarrow \Sigma N.
\end{equation}
Then one sees that $H^\ast(N(X))=0$ for $X\in \bigcup_{i\ge k-1}O_i$. By
induction, we then obtain $N\cong \operatorname{Tot}((A',\delta'))$, where
$\delta'$ is as asserted in the statement of the lemma but with $k$ replaced by $k-1$.
Moreover, the middle term of \zcref{eq:blocks} is of the form 
$\operatorname{Tot}((A'',0))$ where $A''\in \operatorname{Free}(\A)$ is a sum of shifts of $(B_p)_p$. Finally, $\beta$ can be viewed as a closed map of degree zero
$(A',\delta')\xrightarrow{\beta} (A'',0)$ in 
$\Tw \A$. It then follows that $\operatorname{cone} (\beta)\cong (A,\delta)$ (cf. \zcref{eq:cone}) in $H^0(\Tw \A)$ with
$(A,\delta)$ as asserted in the theorem.
\end{proof}

This lemma leads to the following definition.

\begin{definition}
Let ${{\A}}$ be a minimal $A_\infty$-category such that $H^\ast({{\A}})$ is directed
of length $l$. Then 
$\Tw^B_{\le l} \A$ is the full $A_\infty$-subcategory of $\Tw \A$ consisting of objects $(A,\delta)$ as in the statement of \zcref{lem:blockform} with $k=l+1$.
\end{definition}

\begin{corollary} Let ${{\A}}$ be a minimal $A_\infty$-category such that $H^\ast({{\A}})$ is directed of length $l$. Then
$\Tw^B_{\le l} \A$ is pretriangulated. Moreover, we have the following equivalences inside $\D(\A)$,
$$H^0(\Tw^B_{\le l} \A)\cong \D_{\operatorname{fp}}(\A)\cong \Perf(\A).$$
\end{corollary}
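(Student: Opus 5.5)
The plan is to mirror the proof of \zcref{lem:perfect_dimension} and \zcref{cor:pretriang}, with \zcref{lem:blockform} doing the work of the projective-resolution argument.

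\textbf{Step 1: $\D_{\fp}(\A)$ lies in the image of $\Tw^B_{\le l}\A$.} Take $M\in \D_{\fp}(\A)$. The condition $H^\ast(M(X))=0$ for $X\in\bigcup_{i\ge l+1}O_i$ is vacuous, because there are no such blocks. Applying \zcref{lem:blockform} with $k=l+1$ gives $M\cong \operatorname{Tot}((A,\delta))$ with $(A,\delta)\in \Tw^B_{\le l}\A$. Hence $H^0(\operatorname{Tot})\colon H^0(\Tw^B_{\le l}\A)\to\D(\A)$ has essential image containing $\D_{\fp}(\A)$.

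\textbf{Step 2: the image lies in $\Perf(\A)\subseteq\D_{\fp}(\A)$.} Since $\delta$ respects the block filtration, $\operatorname{Tot}((A,\delta))$ is an iterated extension of finitely many shifts of representables. So it lies in $\langle\A\rangle_{l+1}\subseteq\Perf(\A)$.

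For the inclusion $\Perf(\A)\subseteq\D_{\fp}(\A)$, I first check that $H^\ast(\A)$ is graded coherent. It is a finite-dimensional $K$-linear category with finitely many objects, so every finitely generated module is finite-dimensional over $K$, and submodules of finitely generated modules are finitely generated. Hence $H^\ast(\A)$ is noetherian, and in particular coherent. By the lemma above, $\D_{\fp}(\A)$ is then thick and contains $\A$, so it contains $\Perf(\A)$.

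Combining Steps 1 and 2 gives
\[
\operatorname{im} H^0(\operatorname{Tot}) = \D_{\fp}(\A) = \Perf(\A).
\]
Since $\operatorname{Tot}$ is an $A_\infty$-functor into $\C_\infty(\A)$ that is quasi-fully faithful on twisted complexes (the standard fact also used for \zcref{eq:nosplit}), $H^0(\operatorname{Tot})$ is an equivalence onto this image.

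\textbf{Step 3: $\Tw^B_{\le l}\A$ is pretriangulated.} I expect this to be the main obstacle, because $\Tw^B_{\le l}\A$ is not obviously closed under the cone construction \zcref{eq:cone}. The cone of a map between two block-form complexes interleaves their blocks, and the resulting $\delta$ need not be strictly lower triangular with respect to the block indexing: $\sigma^{-1}f$ has components from block $j$ to block $i$ with $i\ge j$, including $i=j$.

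I would therefore not check closure under cones directly. Instead, since $\Tw\A$ is pretriangulated and contains $\Tw^B_{\le l}\A$ fully faithfully, it suffices to show that every object of $\Tw\A$ built from two objects of $\Tw^B_{\le l}\A$ is homotopy equivalent to one in $\Tw^B_{\le l}\A$. That object is perfect, hence lies in $\D_{\fp}(\A)$ by Step 2, so Step 1 produces a block-form replacement. By full faithfulness of $H^0(\operatorname{Tot})$ on $H^0(\Tw\A)$, this is an isomorphism in $H^0(\Tw\A)$.

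Consequently, the essential image of $\Tw^B_{\le l}\A$ in $H^0(\Tw\A)$ is closed under shifts and cones, so $\Tw^B_{\le l}\A$ is pretriangulated. Its triangulated structure then agrees with that of $\Perf(\A)$ under $H^0(\operatorname{Tot})$, which gives the two stated equivalences.
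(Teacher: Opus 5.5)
Your proposal is correct and follows essentially the same route as the paper. Lemma~\ref{lem:blockform} with $k=l+1$ gives essential surjectivity of $H^0(\operatorname{Tot})$ onto $\D_{\fp}(\A)$, the image lies in $\Perf(\A)\subseteq\D_{\fp}(\A)$, and pretriangulatedness is deduced from this equivalence rather than by checking closure under the cone construction directly; you simply make explicit what the paper leaves implicit (graded coherence of $H^\ast(\A)$, quasi-full faithfulness of $\operatorname{Tot}$, and why closure of the essential image under shifts and cones suffices).
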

\begin{proof}
It follows from \zcref{lem:blockform} that $H^0(\Tw^B_{\le l} \A)\cong \D_{\operatorname{fp}}(\A)$, from which we may in particular deduce that $\Tw^B_{\le l}\A$
is pretriangulated. As in \zcref{eq:bigdiagram}, we have $\Perf(\A)\subset \D_{\operatorname{fp}}(\A)$. The fact that the essential image of $H^0(\Tw^B_{\le l}\A)$ 
lies in $\Perf(\A)$ finishes the proof.
\end{proof}

The following is our main result in this section.

\begin{proposition} Let ${{\A}}$ be a minimal $A_\infty$-category such that $H^\ast({{\A}})$ is directed of length $l$ and let $\B$ be an arbitrary $A_\infty$-category.
Let $f:\A\rightarrow \B$ be an $A_n$-functor for $n\ge l$. Then $f$ induces an $A_{n-l}$-functor  $\Tw^B_{\le l}f:\Tw^B_{\le l}\A \rightarrow \Tw \B$.
\end{proposition}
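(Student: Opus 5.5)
We define $\Tw^B_{\le l}f$ by the same formula \zcref{eq:tw_functor} used for $\Tw f$. On objects, $(A,\delta)$ goes to $(f(A),s^{-1}\sum_i f_i(s\delta,\ldots,s\delta))$. On a composable string $g_1,\ldots,g_d$, $(\Tw^B_{\le l} f)_d$ is given by the sum over insertions of copies of $s\delta_j$. The plan is to check that, because of the block structure of objects in $\Tw^B_{\le l}\A$, every term that actually occurs involves $f_m$ only for $m\le d+l$. Once this is known, the expressions for $d\le n-l$ use only the components $f_1,\ldots,f_n$ that we have. The $A_{n-l}$-functor identities then follow exactly as in the proof of \zcref{prop - functoriality *q and Twq} (i.e.\ \cite[Lemma 6.7]{RVdB}). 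That proof is a formal manipulation of the $A_\infty$-identities for $f$ and the Maurer--Cartan equation, and it only ever uses $f$-identities of arity at most the largest arity appearing in the formulas. For the object part this also shows that the twisted differential on $f(A)$ is again Maurer--Cartan, using $f$-identities of arity at most $l$.

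The key step is the arity bound. Take a nonzero term $f_m(\ldots,s\delta,\ldots,sg_j,\ldots,s\delta,\ldots)$. Each entry is a morphism in $\A$ between summands $\Sigma^j\A(-,B)$, with $B$ lying in some block $O_i$, so a composable chain gives a sequence of blocks $i_0,i_1,\ldots,i_m$. Since $\A$ is minimal with $H^\ast(\A)=\A$ directed, $\A(X,Y)=0$ whenever $X\in O_i$, $Y\in O_j$, $j<i$. Hence the block index is weakly monotone along the chain. By the condition $\delta_{ij}=0$ for $j\ge i$ in \zcref{lem:blockform}, each $\delta$-entry strictly increases the block index. (If the convention makes $\delta$ decrease the index, the argument is symmetric; the point is that $\delta$ always changes the index strictly and in the direction allowed by directedness.) The indices lie in $\{0,\ldots,l\}$, so at most $l$ of the entries in a single chain can be $\delta$'s. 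Therefore $m\le d+l$.

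I expect the main obstacle to be getting this monotonicity bookkeeping right: matching the direction of $\delta$ to the direction in which nonzero morphisms of $\A$ go, while keeping track of shifts, which do not affect blocks. One also has to check that unit conditions survive, which is automatic since $f_i$ kills units for $i\ge2$. A further point is that the $A_{n-l}$-identities for $\Tw^B_{\le l} f$ at arity $d$ expand into $f$-identities applied to strings with $d$ morphisms plus some $\delta$'s. By the same count, each such string has arity at most $d+l\le n$, so only identities that $f$ actually satisfies are used. The target is $\Tw\B$ and not a block-truncated version, since $\B$ carries no directedness and the image object need not have block form.
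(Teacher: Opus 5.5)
Your proposal is correct and is essentially the paper's argument. The paper likewise expands the identities into composable strings of components in $\A$ and notes that nonzero components of morphisms do not decrease the block number, while nonzero components of $\delta$ strictly increase it. Hence at most $l$ copies of $\delta$ survive, and only the $A_{d+l}$-identities of $f$ with $d+l\le n$ are needed. Your extra checks (the Maurer--Cartan condition for the image object and strict unitality) are details the paper leaves implicit.
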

\begin{proof} We need to prove that $\Tw^B_{\le l}f$ satisfies the $A_\infty$-identities for sequences of (shifted) morphisms in  $\Tw^B_{\le l}f$  with at most $d=n-l$ elements.
As in the proofs of \cite[Lemmas 6.3, 6.4]{RVdB}, evaluations on such sequences expand to evaluations on composable sequences of morphisms in $\A$ of the form
\begin{equation}
\label{eq:delta_expression}
\underbrace{(s\delta_{d})_{\bullet\bullet}\otimes\cdots\otimes (s\delta_{d})_{\bullet\bullet}}_{i_d}\otimes(sa_{d})_{\bullet\bullet}\otimes\cdots\otimes(sa_2)_{\bullet\bullet} \otimes\underbrace{ (s\delta_{2})_{\bullet\bullet}\otimes\cdots\otimes 
(s\delta_1}_{i_1})_{\bullet\bullet}\otimes(sa_{1})_{\bullet\bullet}\otimes\underbrace{(s\delta_{0})_{\bullet\bullet}\otimes\cdots\otimes(s\delta_0)_{\bullet\bullet}}_{i_0},
\end{equation}
where $(..)_{\bullet\bullet}$ denotes a component.
However, the non-zero $(a_i)_{\bullet\bullet}$ do not decrease the block number while the non-zero $(s\delta)_{\bullet\bullet}$ strictly increase it. Hence, 
\zcref{eq:delta_expression} is zero when $i_0+\cdots+i_d> l$. So it is sufficient to have the $A_n$-identities for $f$ when $d+l\ge n$, which is precisely our hypothesis.
\end{proof}

We can now improve the lower bound of \zcref{thm - undirected} using directedness.

\begin{thm} \label{thm - directed}
  Let $\mc{A}$ be an $N$-formal $k$-linear $A_{\infty}$-category for $N \geq 3$ such that $H^\ast(\A)$
is directed  of length~$l$. Then \zcref{thm - undirected} holds with the bound \zcref{eq:body} replaced by $N\ge l+3$.
\end{thm}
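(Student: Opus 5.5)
We follow the proof of \zcref{thm - undirected improved}, using the block-truncated twisted complexes $\Tw^B_{\le l}$ in place of $\Tw_{\le q}$.

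\textbf{Step 1: reduce to minimal models.} Replace $\A$ by its $A_\infty$-quasi-isomorphic minimal model $(H^\ast(\A),(m_n)_n)$ with $m_3=\cdots=m_N=0$. Its cohomology is directed of length $l$, and $\D_{\fp}$ is invariant under the quasi-equivalence induced by $f$. Apply the last corollary to both $\A':=(H^\ast(\A),(m_n)_n)$ and $\A'':=(H^\ast(\A),m_2)$. This gives pretriangulated $A_\infty$-categories $\Tw^B_{\le l}\A'$ and $\Tw^B_{\le l}\A''$ whose $H^0$ are equivalent, via $\operatorname{Tot}$, to $\D_{\fp}(\A')\cong\D_{\fp}(\A)$ and to $\D_{\fp}(H^\ast(\A))$ respectively. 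Both are already pretriangulated, so no idempotent completion is needed.

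\textbf{Step 2: apply the proposition to the identity map.} The identity is a strict $A_N$-functor $\Id:\A'\to\A''$, since $m_1,m_2$ agree and $m_3,\dots,m_N$ vanish on both sides. By the preceding proposition it induces an $A_{N-l}$-functor $\Tw^B_{\le l}\A'\to\Tw\A''$. Its image lands in $\Tw^B_{\le l}\A''$: the formula \zcref{eq:tw_functor} for a strict functor sends $(A,\delta)$ to $(A,\delta)$ and is the identity on morphisms. For $N\ge l+3$ this is an $A_3$-functor, so $\Id$ respects $m_1,m_2,m_3$ on $\Tw^B_{\le l}$.

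\textbf{Step 3: conclude via Massey products.} By \zcref{prop - Massey Ainfty}, whose role is the same as in the proof of \zcref{thm - undirected}, an $A_3$-functor that is bijective on objects and induces an isomorphism on $H^\ast$ preserves distinguished triangles. The distinguished triangles are characterized through $m_1,m_2,m_3$ via Massey products. Hence the identity on objects and morphisms gives an isomorphism of triangulated categories $H^0(\Tw^B_{\le l}\A')\to H^0(\Tw^B_{\le l}\A'')$. Composing with the $\operatorname{Tot}$ equivalences, exactly as in the commutative diagram in the proof of \zcref{thm - undirected}, yields $\D_{\fp}(\A)\cong\D_{\fp}(H^\ast(\A))$. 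This equivalence is the identity on shifts of representables, because these correspond to twisted complexes with $\delta=0$.

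\textbf{Main obstacle.} The delicate point is Step 3. One must check that applying \zcref{prop - Massey Ainfty} inside $\Tw^B_{\le l}$ is legitimate. Specifically, triangles in $H^0(\Tw^B_{\le l}\A')$ are detected using only $m_{\le 3}$ of $\Tw^B_{\le l}\A'$ itself, and cones do not leave the subcategory up to isomorphism. Pretriangulatedness from the corollary should settle this. If it does not, I would pass to $\Tw_{\le 1}(\Tw^B_{\le l}-)$ as in \zcref{lem:splitlemma}. This costs an extra application of \zcref{prop - functoriality *q and Twq} with $q=1$, which I expect to avoid because no split closure is needed.
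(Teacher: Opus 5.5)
Your proposal is correct and takes the paper's approach: the paper's proof obtains the analogue of \eqref{eq:ANfactorization} with $\Tw^B_{\le l}$ in place of $\Tw_{\le q}$, notes that the identity is now an $A_{N-l}$-functor and hence an $A_3$-functor for $N\ge l+3$, and then concludes exactly as you do, using the Massey product characterization together with the $\operatorname{Tot}$ equivalences. The obstacle you flag in Step 3 is indeed settled by the pretriangulatedness of $\Tw^B_{\le l}$ from the corollary, so no split closure and no $\Tw_{\le 1}$ detour are needed; that detour would in fact only work for $N\ge l+7$, so it could not give the stated bound.
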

\begin{proof} We obtain the analogue of \zcref{eq:ANfactorization} except that the identity map is now an $A_{N-l}$-functor. Since $N\ge l+3$, 
it is still an $A_3$-map, and we may
proceed as before.
\end{proof}

% ----------------------------------------------------------------------------------------
% 3 - EXAMPLES OF N-FORMAL AINFTY-CATS VIA HOCHSCHILD COCYCLES
\section{Examples} \label{sec - examples}
In this section, we use the previous results to construct two new examples of triangulated categories with non-unique enhancements. In contrast to the examples in \cite{RVdBUnic}, the new examples are equipped with a $t$-structure.

\subsection{\texorpdfstring{$A_\infty$}{Ainfty}-algebras with hereditary cohomology}
Below, we discuss some examples where $\A$ is an $A_\infty$-category such that $H^\ast(\A)$ is hereditary. Note that this implies in particular
that $H^\ast(\A)$ is coherent, so we are in the settting of \zcref{prop - Tot qequiv} with $q=1$. We recall the following result:
\begin{thm}[{\cite[Theorem 3.6]{KYZ}}] The functor \label{thm:keller}
  \[
    H^\ast(-):\Perf(\A) \rightarrow \tn{gr}(H^\ast(\A)):M\mapsto H^\ast(M)
  \]
(where $\tn{gr}(-)$ denotes the category of finitely graded modules)  is full, essentially surjective, and its kernel has square zero. In
particular, it induces a bijection between the set of isoclasses of objects (respectively, of indecomposable objects) in $\Perf(\A)$ and $\tn{gr}(H^\ast(\A))$.
\end{thm}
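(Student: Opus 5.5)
We would prove this the way Keller--Yang--Zhou do, by induction along the length-$\le 1$ resolution that hereditarity provides, together with the factorisation $\Perf(\A)\simeq H^0(\Tw_{\le 1}\A)$ from \zcref{prop - Tot qequiv} and \zcref{lem:perfect_dimension} (here $q=1$). Throughout, replace $\A$ by a minimal model. Then every perfect module is, up to summands, the totalization of a two-term twisted complex $(\Sigma P_1\oplus P_0,\delta)$ with $\delta=\sigma^{-1}d$ and $d\in\tn{Free}(\A)(P_1,P_0)$ of degree $0$; Maurer--Cartan is automatic since $m_1=0$ and $\delta^2=0$.

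\emph{Essential surjectivity.} Let $V\in\tn{gr}(H^\ast(\A))$ and take a projective resolution $0\to Q_1\xrightarrow{d}Q_0\to V\to 0$, which is possible since $H^\ast(\A)$ is hereditary and coherent. Lift each $Q_i$ to an object of $\Perf(\A)$, using summands of free objects exactly as in the $q=0$ step of the proof of the inclusion $\D_{\tn{fp},\pdim\le q}(\A)\hookrightarrow\langle\A\rangle_{q+1}$. Next lift $d$ to a morphism $\tilde d$ in $\D(\A)$, using $\operatorname{Hom}_{\D(\A)}(\tilde Q_1,\tilde Q_0)=\operatorname{Hom}_{\tn{gr}}(Q_1,Q_0)$ for projective lifts. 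The long exact sequence in cohomology, combined with injectivity of $d$, then gives $H^\ast(\operatorname{cone}\tilde d)\cong V$.

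\emph{Fullness.} Let $M,M'\in\Perf(\A)$ and $\phi:H^\ast M\to H^\ast M'$. Write $M=\operatorname{cone}(\tilde Q_1\to\tilde Q_0)$ as above, and set $\psi:=\phi\circ(H^\ast\tilde Q_0\to H^\ast M)$. Since $H^\ast\tilde Q_0$ is projective, the map $\operatorname{Hom}(\tilde Q_0,M')\to\operatorname{Hom}(H^\ast\tilde Q_0,H^\ast M')$ is bijective, so $\psi$ lifts to some $g:\tilde Q_0\to M'$. The composite $g\tilde d$ has $H^\ast(g\tilde d)=\phi\circ 0=0$, and again by projectivity $g\tilde d=0$. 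Hence $g$ factors through $M$, giving $\tilde\phi$. A priori $H^\ast\tilde\phi$ agrees with $\phi$ only after precomposing with the surjection $H^\ast\tilde Q_0\to H^\ast M$, and that surjectivity then forces $H^\ast\tilde\phi=\phi$.

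\emph{Square-zero kernel.} Let $u:M\to M'$ satisfy $H^\ast u=0$. Then $u\circ(\tilde Q_0\to M)=0$ by projectivity, so $u$ factors through $M\to\Sigma\tilde Q_1$ via some $v:\Sigma\tilde Q_1\to M'$. Now take a second such map $u':M'\to M''$ and write $u=v\pi$. Then $u'v$ is a map $\Sigma\tilde Q_1\to M''$ with $H^\ast(u'v)=0$, so $u'v=0$ by projectivity, and therefore $u'u=0$.

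\emph{Bijections.} The bijection on isoclasses follows formally from a full, essentially surjective functor with nilpotent kernel: an isomorphism of cohomologies lifts to a map $\tilde\phi$, and composing with a lift of $\phi^{-1}$ gives $1$ plus a nilpotent endomorphism, which is invertible. The same reasoning applied to endomorphism rings shows that local rings correspond, since $\operatorname{End}(M)\to\operatorname{End}(H^\ast M)$ is surjective with nilpotent kernel and idempotents therefore lift. This gives the correspondence of indecomposables.

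\emph{Main obstacle.} The delicate point is making sure that, for projective $P$, the map $\operatorname{Hom}_{\D(\A)}(\tilde P,X)\to\operatorname{Hom}(H^\ast\tilde P,H^\ast X)$ is a bijection for summands of free objects. This is just the Yoneda lemma for $\Y(U)$, which gives $\operatorname{Hom}(\Y(U),X)=H^0X(U)$, extended additively and to summands.
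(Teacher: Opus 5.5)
The paper gives no proof of this statement; it is quoted from \cite[Theorem 3.6]{KYZ}. Your argument is the standard one for $A_\infty$-categories with hereditary cohomology, and it is essentially the argument behind the cited result. It realises a length-one projective resolution by a triangle and uses that $\operatorname{Hom}_{\D(\A)}(P,X)\to\operatorname{Hom}(H^\ast P,H^\ast X)$ is bijective when $H^\ast P$ is finitely generated projective. Essential surjectivity, the square-zero argument and the deduction of the bijection on isoclasses are sound. The minimal model and the two-term twisted complexes play no role and can be dropped.

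One step needs to be made explicit. For fullness, every $M\in\Perf(\A)$ must sit in a triangle $\tilde Q_1\xrightarrow{\tilde d}\tilde Q_0\xrightarrow{\iota}M\to\Sigma\tilde Q_1$ with $H^\ast\tilde Q_i$ finitely generated projective \emph{and} $H^\ast(\tilde d)$ injective, since that injectivity is what makes $H^\ast(\iota)$ surjective. Your ``as above'' does not supply this.
\begin{itemize}
\item Applying the essential-surjectivity construction to $V=H^\ast M$ only yields some cone with the same cohomology as $M$. Identifying that cone with $M$ is exactly the isomorphism-reflection you are trying to prove.
\item The remark that $M$ is a summand of a two-term twisted complex does not make $M$ itself the cone, and it does not make $H^\ast(d)$ injective.
\end{itemize}

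The fix is the induction step in the proof of the lemma containing \zcref{eq:bigdiagram}.
\begin{itemize}
\item $H^\ast M$ is finitely presented because $\Perf(\A)\subset\D_{\tn{fp}}(\A)$; this is where coherence enters.
\item Lift a finite free cover $F\twoheadrightarrow H^\ast M$ to $\tilde F\to M$ via Yoneda, and let $Y$ be its cocone.
\item Then $H^\ast Y=\ker(F\to H^\ast M)$ is finitely generated, and projective by hereditarity, and $H^\ast Y\to H^\ast\tilde F$ is injective.
\item By the $q=0$ step, $Y$ is a summand of a free object, so your Yoneda bijection applies to it.
\end{itemize}
Since $H^\ast$ of a perfect module is finitely presented, $\tn{gr}$ has to be read as finitely presented graded modules. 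That reading is also what makes your $Q_1$ finitely generated in the essential-surjectivity step.

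Finally, replace ``local rings correspond'' by the idempotent argument you already sketch, because indecomposables in $\tn{gr}(H^\ast(\A))$ need not have local endomorphism rings. What you actually need is the following:
\begin{itemize}
\item idempotents lift along $\operatorname{End}(M)\to\operatorname{End}(H^\ast M)$ because the kernel has square zero;
\item an idempotent lying in the kernel, or in $1+$kernel, is $0$ or $1$;
\item $H^\ast M=0$ forces $M\cong 0$.
\end{itemize}
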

\subsection{Example 1} \label{subsec - ex1}
In the rest of this section, $k$ is a field of characteristic zero\footnote{In this case, we can use the Hochschild-Konstant-Rosenberg theorem (see for example 
  \cite[Theorem 9.1.3]{Ginzburg}) to conclude that $\tn{HH}^{i}_{k}(K) \cong \Lambda^{i}\tn{Der}(K) \cong K \neq 0$ for $K := k(x_{1},\ldots,x_{m})$ the fraction field of
  $k[x_{1},\ldots,x_{m}]$.}.
Let $N \geq 4$ and define $K := k(x_{1},\ldots,x_{N+1})$ as the fraction field of $k[x_{1},\ldots,x_{N+1}]$. We fix an $(N+1)$-Hochschild cocycle $\eta$ such that $0 \neq [\eta] \in \tn{HH}^{N+1}_{k}(K)$. 

Via the Yoneda Ext-interpretation, together with \zcref{eq - Ainf dg bimod} and \zcref{eq - dg qiso U}, we obtain that 
\begin{align*}
    \tn{HH}^{N+1}_{k}(K,K) &\cong \Ext^{N+1}_{K \otimes_{k} K}(K,K) \\
    &= \mc{D}(U(K),U(K))(K,K[N+1]) \\
    &\cong \mc{D}(K,K)(K,K[N+1]),
\end{align*}
so we can interpret $\eta$ as an $A_{\infty}$-$K$-$K$-bimodule morphism $K \to K[N+1]$. We then define $M \in \mc{D}(K,K)$ as
\[\begin{tikzcd}
	M & K & {K[N+1]} & {\Sigma M,}
	\arrow[from=1-1, to=1-2]
	\arrow["\eta", from=1-2, to=1-3]
	\arrow["{}", from=1-3, to=1-4]
\end{tikzcd}\]
i.e. $M := \Cone(\eta)[-1]$. By considering the long exact sequence, we find
\begin{equation*}
    H^{i}(M) = \begin{cases} K, &j = 0 \text{ or } j = -N, \\
    0, &\tn{elsewhere.} \end{cases}
\end{equation*}
Thus, $H^{*}(M) = K \oplus K[N]$. We may replace $M$ by a minimal $A_{\infty}$-$K$-$K$-bimodule and can then use $M$ to construct a minimal $A_{\infty}$-category out of $K$. This will be (an $A_{\infty}$-variation on) the lower triangular matrices of 
\cite[\S 4.5]{KellerDerivedHochschild} or \cite[\S 3.2]{OrlovGlue}, or the arrow category of \cite[\S 9.2]{RVdB}.

\begin{definition} 
  Let $\mf{g}_{1} = K \xrightarrow{M} K$ be the $A_{\infty}$-category with as objects $\{ A, B\}$ and $\tn{Hom}$-spaces
    \begin{align*}
        &\mf{g}_{1}(A,A) = \mf{g}_{1}(B,B) = K, \\
        &\mf{g}_{1}(A,B) = M, \\
        &\mf{g}_{1}(B,A) = 0.
    \end{align*}
\end{definition}

By construction, we immediately obtain:

\begin{proposition} \label{prop - ex1}
    The following hold:
    \begin{enumerate}
        \item[$(i)$] $\mf{g}_{1}$ is an $N$-formal $k$-linear $A_{\infty}$-category that is not formal;
        \item[$(ii)$] $H^\ast(\mf{g}_{1})$ is directed of length $1$; 
        \item[$(iii)$] $\mf{g}_{1}$ is cohomologically concentrated in nonpositive degrees.
    \end{enumerate}
  \end{proposition}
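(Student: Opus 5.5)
Parts $(ii)$ and $(iii)$ I would settle first, since they are essentially read off from the construction. We have $H^\ast(\mf{g}_1)(A,A)=H^\ast(\mf{g}_1)(B,B)=K$, $H^\ast(\mf{g}_1)(B,A)=0$, and $H^\ast(\mf{g}_1)(A,B)=H^\ast(M)=K\oplus K[N]$. This is finite dimensional over $K$, and $K$ acts through the bimodule structure; we may take the $K$-structure from the right or left action, since on $H^\ast(M)=K\oplus K[N]$ both actions are the standard ones, $H^\ast(M)$ being an extension of diagonal bimodules. Taking blocks $O_0=\{A\}$ and $O_1=\{B\}$ verifies \zcref{def:directed} with $l=1$: maps from $O_1$ back to $O_0$ vanish, and nonzero endomorphisms are units in the field $K$. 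The cohomology is $K$ in degree $0$ together with $K$ in degree $-N$, so $(iii)$ holds.

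For $(i)$, $N$-formality, I would work with the minimal model of $M$ and make the operations explicit. The minimal category $\mf{g}_1$ has $m_n$ supported only on chains of the form $K\otimes\cdots\otimes K\otimes M\otimes K\otimes\cdots\otimes K$, and these are exactly the bimodule operations $m_{p|1|q}$ of $M$. The operations $m_n$ on $K$ alone are trivial for $n\ge 3$, since $K$ is an honest algebra. I would choose the minimal model of $\Cone(\eta)[-1]$ so that its underlying graded bimodule is $K\oplus K[N]$ with the diagonal actions, and its higher bimodule structure maps $K^{\otimes p}\otimes K\to K[N]$, $p+q=N+1$, are given by components of $\eta$. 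Indeed, for the cone of a map between minimal bimodules, the structure is the diagonal one twisted by the morphism $\eta$, whose components are $\eta_{p|q}$ with $p+q=N+1$ after using a normalized cocycle representative. Using the correspondence between $A_\infty$-bimodule morphisms $K\to K[N+1]$ and Hochschild cochains, this step needs only degree bookkeeping. The arities $p+q+1$ of the nontrivial higher operations are then $N+2$, so $m_3,\ldots,m_{N}$ (indeed up to $N+1$) vanish. Hence $\mf{g}_1$ is itself minimal with $m_3=\cdots=m_N=0$, and the identity exhibits $N$-formality. The degree shift also needs to be checked: an arity $n$ operation has degree $2-n$ and lands from degree $0$ inputs into degree $-N$, consistent with $n=N+2$.

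The main obstacle is non-formality. I would argue by contradiction. If $\mf{g}_1$ were formal, there would be an $A_\infty$-quasi-isomorphism to $H^\ast(\mf{g}_1)$ that is the identity on cohomology. Restricting it to the objects $A$ and $B$ gives $A_\infty$-automorphisms of $K$ (any $A_\infty$-endomorphism of $K$ with identity $f_1$, which is fine). Then $M$ would be quasi-isomorphic as an $A_\infty$-bimodule, after twisting by these automorphisms, to $K\oplus K[N]$. I would normalize further: twisting by an $A_\infty$-automorphism of $K$ does not change whether the extension class vanishes, since it acts trivially on $\tn{HH}^\ast$ up to automorphism. The triangle $M\to K\to K[N+1]$ would then split, forcing $[\eta]=0$ in $\mc{D}(K,K)$, and hence $[\eta]=0$ in $\tn{HH}^{N+1}_k(K)$, a contradiction. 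To make this clean, I would rather use the following formulation: a formal directed category is determined up to quasi-equivalence fixing the vertices by the bimodule $H^\ast(\mf{g}_1)(A,B)$ up to quasi-isomorphism. The nonvanishing of the connecting class $K\to K[N+1]$ of $M$ in $\mc{D}(K,K)$ is invariant under pulling back along $A_\infty$-automorphisms of $K$ that are the identity on cohomology, because these are homotopic to the identity as bimodule twists. This last invariance is where I would expect to spend the most care.
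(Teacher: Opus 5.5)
Your route is the paper's. Parts (ii), (iii) and $N$-formality are immediate, which the paper records as ``by construction''. Your cone description is correct, and in fact any minimal model of $M$ has $m_{p|1|q}\neq 0$ only for $p+q\in\{1,N+1\}$, for degree reasons alone. Non-formality is then reduced to formality of $M$ in $\mc{D}(K,K)$, which contradicts $[\eta]\neq 0$.

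However, the step you flag as the main obstacle is left as a heuristic, and the heuristic does not hold up as stated. $A_\infty$-automorphisms with identity linear part need not act trivially on Hochschild cohomology in general. Moreover, if the restrictions of $f$ to $A$ and $B$ were two different non-trivial automorphisms, twisting would also change the diagonal bimodules $K$ and $K[N+1]$ in the defining triangle. You would then not directly obtain a splitting of that triangle.

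The missing observation is that there is nothing to twist:
\begin{itemize}
\item Since $\mf{g}_1$ is minimal, $f_1=\mathrm{id}$.
\item The higher components of $f$ on $\mf{g}_1(A,A)=K$ and $\mf{g}_1(B,B)=K$ are maps $K^{\otimes n}\to K$ of degree $1-n<0$ between spaces concentrated in degree $0$. Hence they vanish, and $f$ is the identity at $A$ and at $B$.
\item By the $A_\infty$-functor equations (only $m_2$ survives on the target), the components of $f$ on $K^{\otimes p}\otimes M\otimes K^{\otimes q}$ then form an $A_\infty$-bimodule quasi-isomorphism $M\to K\oplus K[N]$.
\end{itemize}

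You also still need one line justifying ``the triangle splits'':
\begin{itemize}
\item Since $\operatorname{Hom}_{\mc{D}(K,K)}(K[N],K)=\tn{HH}^{-N}_k(K)=0$, the map $\pi\colon M\to K$ becomes $(\alpha,0)$ on $K\oplus K[N]$, with $\alpha\in\tn{HH}^0_k(K)=K$.
\item $\alpha\neq 0$, because $H^0(\pi)$ is an isomorphism by the long exact sequence.
\item So $\pi$ has a section $\iota$, and $\eta=\eta\pi\iota=0$, which is the desired contradiction.
\end{itemize}
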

  \begin{proof} We only address $(i)$. If $\mf{g}_{1}$ were formal, then $M$ would be formal in $\D(K,K)$. However, this contradicts the construction
    of $M$ as $[\eta] \neq 0$.
    \end{proof}

Since $N \geq 4$,
$$N - l \geq 4-1 = 3 \geq 3.$$
Consequently, \zcref{thm - directed} applies, so there is an equivalence of $k$-linear triangulated categories
\begin{equation}
  \label{eq:2enhancements}
  \D_{\tn{fp}}(\mf{g}_{1}) \cong \D_{\tn{fp}}(H^{*}(\mf{g}_{1})).
  \end{equation}
This equivalence induces two enhancements on $\D_{\tn{fp}}(H^{*}(\mf{g}_{1}))$: the standard one and the obtained by transferring from $\D_{\tn{fp}}(\mf{g}_{1})$ (cf. \zcref{sec:idempotent_completion}).
\begin{proposition}
  \label{prop:2enhancements}
  The two enhancements on $\D_{\tn{fp}}(H^{*}(\mf{g}_{1}))$ obtained from \zcref{eq:2enhancements} are not equivalent. In other words,
  $\D_{\tn{fp}}(H^{*}(\mf{g}_{1}))$ does not have a unique enhancement.
\end{proposition}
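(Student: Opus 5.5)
We argue by contradiction. Suppose there is an $A_\infty$-functor $F$ between the two enhancements whose $H^0(F)$ is an equivalence; we do not assume it is compatible with the identifications $E,E'$. The first enhancement is the standard one, $\C_{\fp}(H^\ast(\mf{g}_1))$, which is quasi-equivalent to $\Tw^B_{\le 1} H^\ast(\mf{g}_1)$ (with $m_2$ only). The second is $\Tw^B_{\le 1}\mf{g}_1$, or equivalently the full subcategory of $\C_\infty(\mf{g}_1)$ on $\D_{\fp}(\mf{g}_1)$. Since $H^0(F)$ is an equivalence, it maps indecomposable objects to indecomposable objects. The indecomposables of $\Perf$ correspond to indecomposable graded $H^\ast(\mf{g}_1)$-modules by \zcref{thm:keller}, and $H^\ast(\mf{g}_1)$ is a small directed hereditary graded category. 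So the indecomposables are, up to shift, the two projectives $P_A=\Y(A)$, $P_B=\Y(B)$, the simple $S_A$, and a few others coming from $\tn{gr}(H^\ast(\mf{g}_1))$.

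The key step is to identify the images of $\Y(A)$ and $\Y(B)$ intrinsically. Choose an invariant of the triangulated category $H^0$ that pins down, up to shift, the pair of objects $(A,B)$. For instance, $\Y(B)$ and $\Y(A)$ are, up to shift, the indecomposables $X$ whose graded endomorphism ring $\operatorname{Hom}^\ast(X,X)$ is a field, with a nonzero $\operatorname{Hom}^\ast$ in only one direction, isomorphic to $K\oplus K[N]$. One has to check against the list of indecomposables from \zcref{thm:keller} that no other pair has this shape; the simple $S_A$, being a cone of $P_B\otimes H^\ast(M)\to P_A$, will have a different $\operatorname{Hom}$-pattern. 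After composing with shifts, we may then assume $F(A)\cong A$ and $F(B)\cong B$ in $H^0$, where $A,B$ denote the relevant representables or their images.

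Restricting $F$ to the full $A_\infty$-subcategory on these two objects gives an $A_\infty$-functor between minimal models. On one side this is $\mf{g}_1$; on the other it is $H^\ast(\mf{g}_1)$ with trivial higher structure, because the standard enhancement of $H^\ast(\mf{g}_1)$ restricted to representables is formal. The functor $F$ is a quasi-isomorphism on these two objects. Its $H^\ast(F_1)$ need not be the identity, however. It is an automorphism: on $\operatorname{End}(A)$ and $\operatorname{End}(B)$ it is a $k$-automorphism $\sigma_A,\sigma_B$ of $K$, and on $H^\ast(M)$ it is a compatible semilinear map. Hence, after twisting by these automorphisms, $\mf{g}_1$ would be formal.

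Equivalently, the bimodule $M$, twisted to ${}_{\sigma_B}M_{\sigma_A}$, would be formal in $\D(K,K)$, or perhaps in $\D$ of $K$-$K$-bimodules over $k$ after restriction of scalars. This is the step I expect to be the main obstacle: twisting by field automorphisms preserves formality, since it is just transport of structure, so the class $[\eta]$ becomes $(\sigma_B,\sigma_A)$-twisted. In the extension interpretation, formality of ${}_{\sigma_B}M_{\sigma_A}$ forces its defining class to vanish, and this class corresponds to $[\eta]$ transported by an isomorphism of Hochschild cohomology groups. I would argue that an $A_\infty$-quasi-isomorphism $\mf{g}_1\to H^\ast(\mf{g}_1)$ with semilinear $H^\ast(F_1)$ induces, on the bimodule $M$, a quasi-isomorphism of $K$-$K$-bimodules between $M$ and a twist of the formal bimodule $K\oplus K[N]$. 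Untwisting by $\sigma$ then gives that $M$ is formal, which contradicts $[\eta]\neq 0$ as in the proof of \zcref{prop - ex1}(i).

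The care needed is in justifying that a functor between two-object directed $A_\infty$-categories that is an isomorphism on endomorphism fields really restricts to a bimodule quasi-isomorphism up to twist. The component $F$ on $\mf{g}_1(A,B)=M$, with the $F_n$ involving $K$ inputs, is exactly an $A_\infty$-bimodule morphism over the $A_\infty$-map $K\to K$ given by $(F|_{\operatorname{End}})$. That map is itself a twisted quasi-isomorphism of algebras, which can be straightened to $\sigma$ up to homotopy since $K$ is minimal and concentrated in degree $0$.
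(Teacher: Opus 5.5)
The reduction to $F(A)\cong A$, $F(B)\cong B$ (up to shift) fails. The invariants you propose are: indecomposable, graded endomorphism ring $K$, and graded $\operatorname{Hom}^\ast$ nonzero in one direction only and equal to $K\oplus K[N]$. Every exact autoequivalence of $\D_{\tn{fp}}(H^\ast(\mf{g}_1))$ preserves these invariants, and they do not single out the representables. Since $H^\ast(\mf{g}_1)$ is finite-dimensional over $K$ and hereditary, $\D_{\tn{fp}}(H^\ast(\mf{g}_1))$ has a Serre functor $-\otimes^{L}\operatorname{Hom}_K(H^\ast(\mf{g}_1),K)$. It sends $(\Y(A),\Y(B))$ to the indecomposable injectives $(I_A,I_B)$. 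This pair has exactly the same endomorphism rings and graded Homs as $(\Y(A),\Y(B))$, but neither object is a shift of a representable: $I_B=S_B$, and $I_A$ has composition factors $S_A,S_B,S_B$.

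Your sketch of the indecomposables also needs correcting. $\Y(A)$ is itself the simple $S_A$, and the cone of $\operatorname{Hom}^\ast(A,B)\otimes_K\Y(A)\to\Y(B)$ is $S_B$. There are also infinitely many indecomposables (interval modules on zigzag quivers of type $A_\infty^\infty$). So the ``check against the list'' would not come out as you expect. Your proof that the target subcategory is formal rests entirely on its being the subcategory of representables, so this is a genuine hole.

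The missing idea is that you never need to know where $A$ and $B$ go. Take $F$ from the $\mf{g}_1$-side to the standard semi-free enhancement $\C'$ of $\D_{\tn{fp}}(H^\ast(\mf{g}_1))$ (inverting a quasi-equivalence if necessary), and note that $\C'$ is $K$-linear. Because $F$ is quasi-fully faithful, $F(A),F(B)$ is an exceptional pair in $\C'$. The structure maps $K\to\C'(F(A),F(A))$ and $K\to\C'(F(B),F(B))$ are quasi-isomorphisms, because indecomposables of $\D_{\tn{fp}}(H^\ast(\mf{g}_1))$ have endomorphism ring $K$.

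Any such pair in a $K$-linear $A_\infty$-category is formal (\zcref{lem:formal}). In a $K$-linear minimal model the endomorphism spaces are $K\cdot\mathrm{id}$ and there are no morphisms $F(B)\to F(A)$. So every composable string of length at least $3$ contains a scalar multiple of an identity, and $m_n=0$ for $n\ge 3$ by strict unitality and $K$-multilinearity. Hence $\mf{g}_1$ is quasi-isomorphic to a formal category.

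This also settles your semilinear-twist worry in one line. Compose with the strict $A_\infty$-functor given by the inverse of the graded isomorphism induced on cohomology. This gives an $A_\infty$-morphism $\mf{g}_1\to(H^\ast(\mf{g}_1),m_2)$ inducing the identity, which contradicts \zcref{prop - ex1}(i) directly, with no bimodule untwisting needed.
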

\begin{proof} Let $\C$ be the standard enhancement on $\D_{\tn{fp}}(\mf{g}_{1})$ (see \zcref{sec:enhancements})
  and let $\C'$ be the standard dg-enhancement on
  $\D_{\tn{fp}}(H^{*}(\mf{g}_{1}))$ given by semi-free resolutions. Note that $\C'$ is $K$-linear.
  If  the two enhancements on  $\D_{\tn{fp}}(H^{*}(\mf{g}_{1}))$ are equivalent, then there is an $A_\infty$-quasi-equivalence
  $F:\C\rightarrow \C'$. 
  Then $F$ induces an $A_\infty$-isomorphism between $\mf{g}_{1}$ and the full subcategory of $\C'$ spanned by $F(A),F(B)$. Since $F(A), F(B)$ is an exceptional
  sequence (being the image of an exceptional sequence in $\D_{\tn{fp}}(\A)$), this category is formal by \zcref{lem:formal} below. This contradicts the fact that $\mf{g}_1$ is not formal.
\end{proof}

\begin{lemma}
  \label{lem:formal}
  Let $\A$ be a $K$-linear $A_\infty$-category given by an exceptional collection of length two, that is, $\Ob(\A)=\{E,F\}$ and
  \begin{enumerate}
  \item
    The maps $K\rightarrow \A(E,E)$, $K\rightarrow \A(F,F)$ are quasi-isomorphisms;
  \item $\A(F,E)$ is acyclic.
  \end{enumerate}
  Then $\A$ is formal.
\end{lemma}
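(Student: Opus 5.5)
We reduce to a minimal model and then remove all higher operations by a $K$-linear gauge argument. Replace $\A$ by a minimal model: by homotopy transfer over $K$ (using that $\A$ is $K$-linear, we may choose $K$-linear homotopy data), $\A$ is $A_\infty$-quasi-isomorphic to a minimal $K$-linear $A_\infty$-category $\A'$ with $\Ob(\A')=\{E,F\}$ and
\[
\A'(E,E)=\A'(F,F)=K,\qquad \A'(F,E)=0,\qquad \A'(E,F)=V:=H^\ast(\A(E,F)),
\]
a graded $K$-$K$-bimodule. Here we need that the transferred operations can be taken $K$-multilinear (i.e.\ $K$-linear in the sense of $A_\infty$-categories over $K$, with $K$ acting centrally via the units). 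This holds because the side conditions can be chosen $K$-bilinearly: $K$ is a field, so every complex of $K$-vector spaces splits $K$-linearly. Since $\A$ is $K$-linear, the $K$-actions on both sides of $V$ agree, so $V$ is just a graded $K$-vector space.

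Next we show all higher operations on $\A'$ vanish. Because $\A'(F,E)=0$ and the endomorphism algebras are $K$, any composable sequence of nonidentity morphisms contains at most one morphism of $V$. The other entries are scalars in $K$, i.e.\ multiples of identities. By strict unitality, $m_n$ for $n\ge 3$ vanishes whenever an identity appears among its arguments. Any input to $m_n$ with $n\ge3$ must contain at least $n-1\ge 2$ morphisms from the endomorphism spaces, which are $K\cdot\mathrm{id}$. By $K$-multilinearity we may pull out the scalars, leaving an identity argument. Hence $m_n=0$ for $n\ge 3$, and $\A'$ equals its cohomology with $m_2$, so $\A$ is formal.

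The main obstacle is the first step: ensuring the minimal model is genuinely $K$-linear and strictly unital, so that $K$-scalars can be pulled out and strict unitality kills $m_n$ ($n\geq 3$). Without $K$-linearity, $m_n$ could be nonzero on inputs such as $(v,\lambda,\mu)$ with $\lambda,\mu\in K\setminus k$, which is exactly the phenomenon exploited in Example~1. I would handle this by performing homotopy transfer entirely in the category of $K$-linear $A_\infty$-categories, i.e.\ treating $\A$ as an $A_\infty$-category over the ground field $K$. There the standard Kadeishvili/Markl transfer, with strictly unital homotopy data chosen so that $K\cdot\mathrm{id}$ lies in the image of the inclusion and the homotopy annihilates units, produces a strictly unital minimal model. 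The transfer morphism $\A'\to\A$ is then a $K$-linear $A_\infty$-quasi-isomorphism, hence in particular $k$-linear, and composing with the inverse quasi-isomorphism gives the required formality map.
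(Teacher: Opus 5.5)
Your proof is correct and follows the same route as the paper. You pass to a $K$-linear, strictly unital minimal model, so that the endomorphism spaces are $K$, $\A(F,E)=0$ and $\A(E,F)=H^\ast(\A(E,F))$. You then observe that every composable input to $m_n$ with $n\ge 3$ contains an endomorphism argument, which is a scalar multiple of an identity and is therefore killed by $K$-multilinearity and strict unitality.

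The only difference is that you spell out why a $K$-linear strictly unital minimal model exists, namely homotopy transfer over $K$ with unital side conditions. The paper simply asserts this step.
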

\begin{proof}
  We may replace $\A$ by a $K$-linear minimal model. Then, by $K$-linearity and the fact
  that $\A$ is strictly unital, we see that $\A=K\xrightarrow{M} K$, where
  $M=\A(E,F)$ with no higher operations. Since $\A$ has zero differential, we are done.
\end{proof}
\subsection{Example 2} \label{subsec - ex2}
 We now let $K := k(x_{1},\ldots,x_{N})$ be the fraction field of $k[x_{1},\ldots,x_{N}]$. We fix an $N$-Hochschild cocycle $\eta$ such that $0 \neq [\eta] \in \tn{HH}^{N}_{k}(K)$.

\begin{definition}\label{def - perturbed}
Let $N \geq 7$ be odd\footnote{This ensures graded commutativity of $K[t]$ below, since $\vert t \vert = 1-N$ will have even degree.}.
    Consider the graded algebra $K[t]$ where $\vert t \vert = 1-N$. We endow it with a minimal $A_{\infty}$-structure using an argument similar to the one in \cite{RVdBUnic}:
    Recall that, by the graded HKR-theorem (see for example \cite[Theorem 9.1.3]{Ginzburg}),
    \begin{align*}
        \tn{HH}^{i}_{k}(K) &= 0 \qquad \tn{ if } i > N, \\
        \tn{HH}^{i}_{k}(k[t]) &= 0 \qquad \tn{ if } i > 1, \\
        \tn{HH}^{1}_{k}(k[t]) &= k[t] d/dt.
    \end{align*}
    By \cite[Lemma 1]{RVdBUnic}, we have that
    $$\tn{HH}^{*}_{k}(K[t]) \cong \tn{HH}^{*}_{k}(K) \otimes \tn{HH}^{*}_{k}(k[t]).$$
    As a result, we have that $\tilde{\eta} := \eta \otimes d/dt$ defines a non-trivial element of $\tn{HH}^{N+1}_{k}(K[t])$. We can also conclude that $\tn{HH}^{i}_{k}(K[t]) = 0$ for $i > N+1$, which means that, by \cite[B.4.1]{Lefevre}, there are no obstructions to extending $\tilde{\eta}$ to a minimal $A_{\infty}$-structure on $K[t]$. So we may construct a minimal $A_{\infty}$-structure 
    $$(0,m_{2},0,\ldots,0,m_{N+1},m_{N+2},\ldots)$$
    on $K[t]$ so that $m_{2}$ is the original multiplication and the class of $m_{N+1}$ is $[\tilde{\eta}]$. We denote the resulting minimal $A_{\infty}$-algebra by $\mf{g}_{2}$. The corresponding $A_{\infty}$-category with one object will also be denoted by $\mf{g}_{2}$.
\end{definition}

\begin{proposition} \label{prop - ex2}
    The following hold:
    \begin{enumerate}
        \item[$(i)$] $\mf{g}_{2}$ is an $N$-formal $k$-linear $A_{\infty}$-category that is not formal;
        \item[$(ii)$] $\mf{g}_{2}$ is homologically coherent and $H^{*}(\mf{g}_{2}) = K[t]$ is hereditary;
        \item[$(iii)$] $H^\ast(\mf{g}_{2})$ has free fp-projectives;
        \item[$(iv)$] $\mf{g}_{2}$ is cohomologically concentrated in nonpositive degrees.
    \end{enumerate}
    \begin{proof}
        That $\mf{g}_{2}$ is not formal follows by \cite[B.4.2]{Lefevre}: Any $A_{\infty}$-isomorphism $f: \mf{g}_{2} \to H^{*}(\mf{g}_{2})$ has $f_{1}$-component in $\tn{Aut}_{k}(K[t])$. By \cite[B.4.2]{Lefevre}, the obstruction to extending an arbitrary automorphism $f_{1} \in \tn{Aut}_{k}(K[t])$ to an $A_{\infty}$-isomorphism is given by $\tilde{\eta} \circ f_{1}$. Since $\tilde{\eta}$ is not trivial, neither is this obstruction, and we are done.

        The fact that $H^\ast(\mf{g}_{2})$ has free fp-projectives follows from the fact that $K[t]$ is a PID so that projectives are free.
    \end{proof}
\end{proposition}

Since $N \geq 7$,
$$N - 4q \geq 7-4 = 3 \geq 3.$$
Consequently, \zcref{thm - undirected} applies, so there is an equivalence of $k$-linear triangulated categories
\begin{equation}
  \label{eq:2enhancements2}
\D_{\tn{fp}}(\mf{g}_{2}) \cong \D_{\tn{fp}}(H^{*}(\mf{g}_{2})).
\end{equation}
\begin{proposition}
  \label{prop:2enhancements2}
  The two enhancements on $\D_{\tn{fp}}(H^{*}(\mf{g}_{2}))$ obtained from \zcref{eq:2enhancements2} are not equivalent. In other words, $\D_{\tn{fp}}(H^{*}(\mf{g}_{2}))$ does not have a unique enhancement.
\end{proposition}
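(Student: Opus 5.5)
Mirroring the argument for Proposition~\ref{prop:2enhancements}, I argue by contradiction. Let $\C$ be the standard enhancement of $\D_{\tn{fp}}(\mf{g}_2)$, transported to $\D_{\tn{fp}}(H^\ast(\mf{g}_2))$ via \eqref{eq:2enhancements2}. Let $\C'$ be the standard dg-enhancement of $\D_{\tn{fp}}(K[t])$ by semi-free resolutions. Note that $\C'$ is $K$-linear; more precisely, it is linear over the graded-commutative algebra $K[t]$, since $K[t]$ is central of even degree. Suppose there is an $A_\infty$-functor $F:\C\to\C'$ with $H^0(F)$ an equivalence. Then $F$ is a quasi-equivalence, because $H^\ast$ is recovered from $H^0$ using shifts in both pretriangulated categories. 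Restricting $F$ to the object $\mf{g}_2\in\C$, it induces an $A_\infty$-quasi-isomorphism from $\mf{g}_2$, which is minimal, to the endomorphism dg-algebra $E:=\C'(P,P)$, where $P=F(\mf{g}_2)$.

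The key step is to show that $E$ is formal. Since $H^0(F)$ is an equivalence, $H^\ast(E)\cong H^\ast(\mf{g}_2)=K[t]$ as graded algebras. By Theorem~\ref{thm:keller}, or directly from $P$ being perfect with $\End^\ast(P)\cong K[t]$, the object $P$ corresponds to an indecomposable graded $K[t]$-module $H^\ast(P)$ with $\End^\ast = K[t]$. Hence $H^\ast(P)$ is a shift of $K[t]$, so $P\cong \Sigma^j K[t]$ in $\D(K[t])$. Consequently $E$ is quasi-isomorphic, as a dg-algebra, to $\End_{\C'}(\Sigma^j K[t])$. This is quasi-isomorphic to the graded algebra $K[t]$ with zero differential, because the semi-free module $K[t]$ is its own resolution and its endomorphism dg-algebra is $K[t]$ itself. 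So $E$ is formal.

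Composing the quasi-isomorphisms gives an $A_\infty$-quasi-isomorphism $\mf{g}_2\to (K[t],m_2)$. Since $\mf{g}_2$ is minimal, this is an $A_\infty$-isomorphism, so $\mf{g}_2$ is formal, contradicting Proposition~\ref{prop - ex2}(i). The one point needing care is that $H^\ast$ of this composite need not be the identity, only some automorphism of $K[t]$. This is harmless: precompose with the strict inverse automorphism to get an $A_\infty$-morphism inducing the identity. Alternatively, note that the non-formality argument in Proposition~\ref{prop - ex2} already handles an arbitrary $f_1\in\tn{Aut}_k(K[t])$. I expect the main obstacle to be justifying rigorously that $F(\mf{g}_2)\cong\Sigma^jK[t]$, i.e. identifying $P$ from its graded endomorphism ring via the hereditary structure, and checking that no $k$-linear but non-$K$-linear subtlety enters. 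The latter causes no trouble, since formality of $E$ is intrinsic to $E$ and the comparison uses only $k$-linear $A_\infty$-maps.
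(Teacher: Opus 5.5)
Your proposal is correct and follows essentially the same route as the paper's proof: you restrict $F$ to the object $\mf{g}_2$, identify $F(\mf{g}_2)$ with a shift of $K[t]$ via Theorem~\ref{thm:keller}, and use that $K[t]$ is formal to contradict the non-formality of $\mf{g}_2$. Your use of $\End^\ast\cong K[t]$ is what actually excludes the torsion indecomposables $\Sigma^j K[t]/(t^n)$: $K[t]$ is a graded domain, so the square-zero kernel of $H^\ast$ vanishes. The paper leaves this point implicit. One small correction: the normalizing automorphism must be composed on the target side, as a strict automorphism of $(K[t],m_2)$, not precomposed on $\mf{g}_2$.
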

\begin{proof}
  We use the same notations as in the proof of Proposition \ref{prop:2enhancements}. $F$ induces an $A_\infty$-isomorphism between $\mf{g}_2$ and the category
  spanned by $F(\mf{g}_2)$. Now, $F(\mf{g}_2)$ is an indecomposable object in $\D_{\tn{fg}}(H^\ast(\mf{g}_2))$ and, since $K[t]$ is a PID, it follows from
  \zcref{thm:keller} that $F(\mf{g}_2)$ is a shift of $H^\ast(\mf{g}_2)$. Now $\C'(\Sigma^l H^\ast(\mf{g}_2),\Sigma^l H^\ast(\mf{g}_2))=K[t]$, which is formal. This contradicts the non-formality of $\mf{g}_2$.
  \end{proof}

\appendix

% ----------------------------------------------------------------------------------------
% 2 - MASSEY PRODUCTS IN A-INFINITY CATEGORIES
\section{The triangulated structure of \texorpdfstring{$A_\infty$}{Ainfty}-categories} \label{appendix}
\subsection{Main result}
For the convenience of the reader we give a proof of the following result:
\begin{proposition} \label{prop - Massey Ainfty}
  Let $\B$ be an $A_\infty$-category. Then the distinguished triangles in $H^\ast(\B)$ in the sense of \zcref{def - cone}
are determined by the operations $m_{1},m_{2}$ and $m_{3}$.
Namely, a sequence of composable morphisms in $H^\ast(\B)$, 
    \[\begin{tikzcd}
	X & Y & Z & {X,}
	\arrow["f", from=1-1, to=1-2]
	\arrow["g", from=1-2, to=1-3]
	\arrow["h", from=1-3, to=1-4]
	\arrow["{(1)}"', from=1-3, to=1-4]
\end{tikzcd}\]
satisfying $gf = 0 = hg$, yields a distinguished triangle in $H^0(\B)$ if and only if the set of Massey products  $\langle h,g,f \rangle \subseteq H^{0}(\B(X,X))$ as defined in
\zcref{def - Massey Ainf} below, contains $\tn{id}_X$.
\end{proposition}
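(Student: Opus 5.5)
We work in $\Tw\B$, using only the $A_\infty$-operations of $\B$ and the explicit cone \zcref{eq:cone}. Since $H^\ast(\B)\to H^\ast(\Tw\B)$ is fully faithful and distinguished triangles in $H^\ast(\B)$ are defined via their images, we may regard $f,g,h$ as closed representatives in $\B$. We use the standard triangle $X\xrightarrow{f} Y\xrightarrow{i} C(f)\xrightarrow{p} X$ of \zcref{eq:nonstandard}. The given triangle is distinguished iff there is a closed degree-zero morphism $\phi: Z\to C(f)$ in $\Tw\B$ that is an isomorphism in $H^0$ and satisfies $[\phi g]=[i]$ and $[p\phi]=[h]$. (Isomorphisms of triangles with first two vertices fixed suffice: use the identity on $X,Y$, which is allowed by rotating and using the five lemma.)

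\textbf{Step 1: components of $\phi$.} A morphism $\phi: Z\to \Sigma X\oplus Y$ in $\tn{Free}(\B)$ has components $\phi=(\sigma^{-1}a, b)$ with $a\in \B(Z,X)$ of degree $1$ (shifted to degree $0$) and $b\in\B(Z,Y)$ of degree $0$. Expand $m_1^{\Tw}(\phi)=0$ with the twisted differential \zcref{eq:cone}. The only $\delta$ involved is $\sigma^{-1}f$, and $\delta$ appears at most once since it is strictly lower triangular with a single nonzero block. So the condition reads $m_1(a)=0$ and $m_1(b)\pm m_2(f,a)=0$ in the appropriate signs, i.e.\ $b$ is a null-homotopy of $fa$. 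Next compute $p\phi$ and $\phi g$ using $m_2^{\Tw}$, which again involves at most one $\delta$. This gives $p\phi=\pm a$, and $\phi g$ has components $(\pm m_2(a,g),\, m_2(b,g)\pm m_3(f,a,g))$. The conditions are therefore: $a$ represents $h$; there is $u\in\B(Y,X)$ of degree $0$ with $m_2(a,g)=m_1(u)$ up to sign; and, compared against $i=(0,\mathrm{id}_Y)$ modulo $m_1^{\Tw}$ of some $(\sigma^{-1}u,v)$, the $Y$-component gives $bg \pm m_3(f,a,g)\pm fu = \mathrm{id}_Y + m_1(v)$.

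\textbf{Step 2: rephrase as a Massey product.} Under \zcref{def - Massey Ainf}, $\langle h,g,f\rangle$ should be the set of classes $m_3(h,g,f)\pm m_2(h,s)\pm m_2(t,f)$ with $m_1 s=gf$ and $m_1 t= hg$. The computation above naturally lands in $\B(Y,Y)$, in the form of the Massey product $\langle f,h,g\rangle$, whereas the statement asks about $\mathrm{id}_X$. Two routes handle this. The first is to rotate the triangle, which transforms distinguishedness of $(f,g,h)$ into that of $(g,h,-\Sigma f)$; the Massey product for the rotated triangle is then $\langle h,g,f\rangle$ at $X$. The second is to build instead $\psi: C(f)\to Z$, i.e.\ to work with the cone on the other side and compare $\psi$ through the $X$-component. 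We commit to the first route: compute the condition for $C(g)$, with $\phi$ going $\Sigma X \to C(g)$, so that the identity appears at $X$. This yields exactly $\mathrm{id}_X\in\langle h,g,f\rangle$ for suitable choices of null-homotopies.

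\textbf{Step 3: the converse, and the main obstacle.} Given Massey data with $m_3(h,g,f)+\dots=\mathrm{id}_X+m_1(\cdot)$, we define $\phi$ by the same formulas. By Step 1 it is closed and makes the diagram commute in $H^\ast$. It is then an isomorphism in $H^0$ by the five lemma applied to the long exact $\operatorname{Hom}(W,-)$ sequences; these are exact for the standard triangle, and for $Z$ because $gf=0=hg$ together with the Massey condition. This last point is the delicate one. Mere commutativity does not give an isomorphism unless we already know the original triangle is exact, and we do not. We therefore instead construct $\phi$ in both directions from the Massey data and check that the composites are identities up to homotopy using $m_3$ and the $A_\infty$-relations, which avoids the five lemma. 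The main obstacle is the sign bookkeeping in the $m_2^{\Tw},m_3^{\Tw}$ expansions, with the Koszul rule $s\sigma=-\sigma s$, together with ensuring that the choice of null-homotopies matches the definition of the Massey set exactly.
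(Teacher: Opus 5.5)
\textbf{The converse in Step~3 cannot be proved, because the statement is false as written.} You are right that commutativity alone does not give an isomorphism. But your remedy, building a map in the other direction from the Massey data and checking the composites, cannot work: the ``if'' direction fails. Take $\mathcal{B}=k$ with one object $\ast$ and the triangle $\ast\xrightarrow{\mathrm{id}}\ast\xrightarrow{0}\ast\xrightarrow{0}\Sigma\ast$. Then $gf=0=hg$. Choosing $u=0$ and $v=\mathrm{id}$ in the definition gives $\tilde b=\mathrm{id}$; indeed, by the coset description, $\langle h,g,f\rangle$ is all of $H^0(\mathcal{B}(X,X))$. Yet the triangle is not distinguished, since the cone of $\mathrm{id}$ is zero while $\ast\not\cong 0$.

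A Massey condition at one vertex only yields a morphism of triangles $(\mathrm{id}_X,\mathrm{id}_Y,a)$ out of the standard triangle. It carries nothing from which an inverse could be built. What is missing is a hypothesis: the triangle must be exact, i.e.\ induce long exact sequences of $H^\ast(\mathcal{B})(W,-)$ for $W\in\mathcal{B}$. With that hypothesis, the five-lemma argument you discarded is the proof, since the objects of $\mathcal{B}$ generate $\mathrm{Tw}\,\mathcal{B}$ and so detect isomorphisms. Exactness involves only $m_1,m_2$, so the corollary that the triangulated structure is determined by $m_1,m_2,m_3$ still holds, and that corollary is what the undirected theorem uses.

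The paper does not address this point. It identifies the $A_\infty$ Massey set with Heller's triangulated one in $H^0(\mathrm{Tw}\,\mathcal{B})$, using $\tilde a=(-\sigma^{-1}u,\tilde g)\colon C(f)\to Z$, the homotopy $q=(0,v)$ and the coset description. It then quotes Heller's criterion, which as stated is refuted by the same example.

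\textbf{Separately, the rotation in Step~2 goes the wrong way.} Your Step~1 computation, with $\phi$ from the third vertex into the cone of the first map, puts the identity at the \emph{second} vertex. For $(g,h,-\Sigma f)$ on $(Y,Z,\Sigma X)$ and $\phi\colon\Sigma X\to C(g)$, the condition $[\phi h]=[i]$ produces an identity at $Z$. That is a Massey condition for $Z\xrightarrow{h}\Sigma X\xrightarrow{-\Sigma f}\Sigma Y\xrightarrow{\Sigma g}\Sigma Z$, not $\mathrm{id}_X\in\langle h,g,f\rangle$. The two conditions are not interchangeable: in the example above, that bracket is $\{0\}$. You need the backward rotation $(-\Sigma^{-1}h,f,g)$. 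More simply, use your discarded second route, which is the paper's: map out of the cone, $a\colon C(f)\to Z$, whose components and homotopy are exactly the $u,v$ of the definition. With that correction, your ``only if'' direction goes through.
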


\begin{corollary} \label{cor - Massey Ainfty}
  The triangulated structure of a pretriangulated $A_{\infty}$-category $\B$ is determined by the operations $m_{1},m_{2}$ and $m_{3}$.
\end{corollary}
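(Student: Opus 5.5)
The plan is to derive the corollary directly from \zcref{prop - Massey Ainfty}. Since $\B$ is pretriangulated, the inclusion $\B\rightarrow \Tw \B$ is a quasi-equivalence, and the triangulated structure of $H^0(\B)$ is by definition the one transported from $H^0(\Tw \B)$. Three pieces of data must therefore be shown to depend only on $m_1,m_2,m_3$: the additive category $H^0(\B)$, the shift functor, and the class of distinguished triangles.

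The first step handles the underlying category. $H^\ast(\B)$, with its composition, is computed from $m_1$ (the cohomology) and $m_2$ (the induced composition), so $H^0(\B)$ together with its graded enrichment $H^\ast(\B)$ depends only on these two operations.

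The second step handles the shift. In a pretriangulated category, $\Sigma X$ is, up to unique isomorphism, any object $Y$ equipped with a closed degree-zero isomorphism class $Y\cong \Sigma X$ in $H^0(\Tw\B)$. Equivalently, it is determined by a natural isomorphism $H^\ast(\B)(-,Y)\cong \Sigma H^\ast(\B)(-,X)$ of graded $H^\ast(\B)$-modules. Such an isomorphism amounts to an invertible element of $H^{-1}(\B)(Y,X)$, and invertibility is a condition involving only $m_2$ on cohomology. Hence both the shift functor, up to canonical isomorphism, and the identification $\operatorname{Hom}^\ast(A,B)=(\operatorname{Hom}(A,\Sigma^nB))_n$ from \zcref{sec:enrichment} are determined by $m_1,m_2$. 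This lets us write triangles in $H^\ast(\B)$ in the form $X\rightarrow Y\rightarrow Z\xrightarrow{(1)} X$.

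The third step handles the distinguished triangles. A triangle in $H^0(\B)$ is distinguished if and only if, viewed as a triangle in $H^\ast(\B)$ via the identification above, its image in $H^\ast(\Tw\B)$ is distinguished, which is \zcref{def - cone}. Distinguished triangles satisfy $gf=0=hg$ automatically. By \zcref{prop - Massey Ainfty}, for such sequences distinguishedness is equivalent to $\tn{id}_X\in\langle h,g,f\rangle$. The Massey product set is built from $m_1$ (choosing nullhomotopies), $m_2$ and $m_3$ alone, so the class of distinguished triangles depends only on $m_1,m_2,m_3$.

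The main obstacle is the bookkeeping in the second step. Translating between triangles with a degree-one connecting map in $H^\ast(\B)$ and triangles ending in $\Sigma X$ in $H^0(\B)$ requires fixed isomorphisms $\Sigma X\cong Y$. We must check that a different choice of these isomorphisms, which depends only on $m_2$-compositions, does not change which triangles are distinguished. This follows because the class of distinguished triangles is closed under isomorphism of triangles.
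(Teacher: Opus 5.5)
Your proposal is correct and follows the paper's route. The corollary is read off directly from Proposition~\ref{prop - Massey Ainfty}, because the Massey product sets are built only from $m_1,m_2,m_3$; you additionally spell out why $H^\ast(\B)$ and the shift depend only on $m_1,m_2$, which the paper leaves implicit.

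One small slip in your second step: with the convention $\operatorname{Hom}^n(A,B)=\operatorname{Hom}(A,\Sigma^n B)$, an isomorphism $Y\cong\Sigma X$ corresponds to an invertible element of $H^{1}(\B)(Y,X)$, equivalently of $H^{-1}(\B)(X,Y)$. It is not an element of $H^{-1}(\B)(Y,X)$ as you wrote.
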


This result is proven by comparing Massey products in triangulated categories and in $A_\infty$-categories.

\subsection{Massey products in triangulated categories}
\label{sec:massey_triang}
Below, we will review some results from \cite[\S1.3]{Heller} regarding Massey products in triangulated categories. Let $\mc{T}$ be a triangulated category with translation functor $\Sigma$ and assume there are composable maps in $\T$,
\[\begin{tikzcd}
	X & Y & Z & {U,}
	\arrow["f", from=1-1, to=1-2]
	\arrow["g", from=1-2, to=1-3]
	\arrow["h", from=1-3, to=1-4]
	\arrow["{(1)}"', from=1-3, to=1-4]
\end{tikzcd}\]
satisfying $gf = 0 = hg$. By the convention \zcref{sec:enrichment}, we formally have $\T^1(Z,U)=\T(Z,\Sigma U)$, so
$h$ may also be viewed as a map $Z\rightarrow \Sigma U$.

\begin{definition} \label{def - Massey triang}
    The \ti{set of Massey products $\langle h,g,f \rangle \subseteq \T(X, U)$} is the (non-empty) set of morphisms $b \in \T(X, U)$ completing the following commutative diagram:
    \begin{equation} \label{eq - triangle Massey} \begin{tikzcd}
	X & Y & Z & U \\
	X & Y & C & {X,}
	\arrow["f", from=1-1, to=1-2]
	\arrow[equals, from=1-1, to=2-1]
	\arrow["g", from=1-2, to=1-3]
	\arrow[equals, from=1-2, to=2-2]
	\arrow["h", from=1-3, to=1-4]
	\arrow["{(1)}"', from=1-3, to=1-4]
	\arrow["f", from=2-1, to=2-2]
	\arrow["i", from=2-2, to=2-3]
	\arrow["a", from=2-3, to=1-3]
	\arrow["p", from=2-3, to=2-4]
	\arrow["b", from=2-4, to=1-4]
	\arrow["{(1)}"', from=2-3, to=2-4]
\end{tikzcd}\end{equation}
    where the lower row is a distinguished triangle in $\T$.
\end{definition}

\begin{proposition}
    The set $\langle h,g,f \rangle$ is a coset of $h \T^{-1}(X,Z) + \T(Y,U)  f$ inside $\T(X,U)$. 
\end{proposition}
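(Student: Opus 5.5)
The plan is to work with a single distinguished triangle on $f$ and read off all the freedom in choosing $a$ and $b$ from the long exact $\operatorname{Hom}$-sequences. First, note that the set $\langle h,g,f\rangle$ does not depend on which distinguished triangle $X\xrightarrow{f}Y\xrightarrow{i}C\xrightarrow{p}\Sigma X$ is used in the bottom row of \zcref{eq - triangle Massey}. Any two such triangles are isomorphic via an isomorphism $\phi:C\to C'$ that is the identity on $X$ and $Y$ (TR3 together with the five lemma). Replacing $(C,i,p,a)$ by $(C',\phi i,p\phi^{-1},a\phi^{-1})$ turns a completion $b$ of one diagram into a completion of the other with the same $b$. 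So we fix one such triangle once and for all.

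\emph{Step 1: non-emptiness.} Apply $\T(-,Z)$ to the triangle to get the exact sequence
\[
\T(\Sigma X,Z)\xrightarrow{-\circ p}\T(C,Z)\xrightarrow{-\circ i}\T(Y,Z)\xrightarrow{-\circ f}\T(X,Z).
\]
Since $gf=0$, there is $a:C\to Z$ with $ai=g$. Next apply $\T(-,\Sigma U)$ to obtain
\[
\T(\Sigma Y,\Sigma U)\xrightarrow{-\circ \Sigma f}\T(\Sigma X,\Sigma U)\xrightarrow{-\circ p}\T(C,\Sigma U)\xrightarrow{-\circ i}\T(Y,\Sigma U).
\]
We have $(ha)i=hg=0$, so there is $b$ with $bp=ha$. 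Hence $\langle h,g,f\rangle\neq\emptyset$.

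\emph{Step 2: the indeterminacy.} Two choices $a,a'$ with $ai=a'i=g$ differ, by the first exact sequence, by $a'-a=cp$ with $c\in\T(\Sigma X,Z)=\T^{-1}(X,Z)$. Then $ha'=ha+hcp$, so $b+hc$ is a valid completion for $a'$. For fixed $a$, two completions $b,b'$ satisfy $(b'-b)p=0$. By the second exact sequence, $b'-b=d\,\Sigma f$ with $d\in\T(\Sigma Y,\Sigma U)\cong\T(Y,U)$; this is the term $\T(Y,U)f$ after desuspending, up to a sign that does not affect the subgroup.

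Conversely, every element of $h\T^{-1}(X,Z)+\T(Y,U)f$ arises in this way. Given $c$, the pair $(a+cp,\,b+hc)$ still makes the diagram commute. Given $d$, the pair $(a,\,b+d\,\Sigma f)$ does too, because $\Sigma f\circ p=0$ (composites of consecutive maps in a triangle vanish, up to sign). Combining the two, $\langle h,g,f\rangle$ is exactly $b_0+\bigl(h\T^{-1}(X,Z)+\T(Y,U)f\bigr)$ for any one completion $b_0$.

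\emph{Main obstacle.} The main care needed is bookkeeping rather than substance. One has to match the identifications $\T(\Sigma X,Z)=\T^{-1}(X,Z)$ and $\T(\Sigma X,\Sigma U)=\T(X,U)$ under the enrichment convention of \zcref{sec:enrichment}. One also has to check that the signs introduced by $\Sigma$ (for instance $-\Sigma f$ in the rotated triangle) only multiply elements of the indeterminacy subgroup by $\pm1$, so they do not change the coset.
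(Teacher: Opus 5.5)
Your proof is correct. The paper gives no proof of this statement: it is recalled from Heller (\S1.3), so there is no internal argument to match against.

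Your three pieces together yield exactly a coset of $h\T^{-1}(X,Z)+\T(Y,U)f$.
\begin{itemize}
\item You show that the set does not depend on the chosen cone, using the TR3 isomorphism $\phi$ that is the identity on $X$ and $Y$.
\item You get non-emptiness from the exact sequences obtained by applying $\T(-,Z)$ and $\T(-,\Sigma U)$.
\item You analyse the indeterminacy in two steps. Replacing $a$ by $a+cp$ forces $b\mapsto b+hc$. For fixed $a$, the freedom in $b$ is $\ker(-\circ p)=\operatorname{im}(-\circ\Sigma f)$. The converse direction is also checked, using $pi=0$ and $(\Sigma f)p=0$.
\end{itemize}

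The closest comparison is the paper's proof of the $A_\infty$-analogue in the appendix, which works at cochain level.
\begin{itemize}
\item It varies the lifts $\tilde f,\tilde g,\tilde h$ and shows that this changes $\tilde b$ only by a coboundary.
\item It varies the bounding cochains $u,v$ by cycles $\xi_u,\xi_v$, which changes $\tilde b$ by $-b_2(s\tilde h,s\xi_u)+b_2(s\xi_v,s\tilde f)$.
\end{itemize}
Under the dictionary of the comparison lemma there, the morphism to $Z$ is $\tilde a=(-\sigma^{-1}u\ \ \tilde g)$ on the standard cone, and $q=(0\ \ v)$ is the homotopy. So shifting $u$ by $\xi_u$ is exactly your $a\mapsto a+cp$, and shifting $v$ by $\xi_v$ with $u$ fixed is exactly your $b\mapsto b+d\,\Sigma f$.

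Each route has to handle something the other avoids. Yours stays intrinsic to $\T$ and needs no enhancement, but it must deal with independence of the chosen distinguished triangle; the cochain argument sidesteps this by always using the standard cone. The cochain version instead has to control the choice of lifts, which has no counterpart in the triangulated setting.
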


Massey products allow us to detect distinguished triangles in $\T$.

\begin{proposition} \label{prop - Classic Massey}
    A series of composable arrows
    \[\begin{tikzcd}
	X & Y & Z & X
	\arrow["f", from=1-1, to=1-2]
	\arrow["g", from=1-2, to=1-3]
	\arrow["h", from=1-3, to=1-4]
	\arrow["{(1)}"', from=1-3, to=1-4]
\end{tikzcd}\]
    in $\T$ satisfying $gf = 0 = hg$ is a distinguished triangle if and only if $\id_{X} \in \langle h,g,f \rangle$.
\end{proposition}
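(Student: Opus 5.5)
Both directions follow from the axioms of a triangulated category, using the description of $\langle h,g,f\rangle$ in Definition~\ref{def - Massey triang} together with the previous proposition (that $\langle h,g,f\rangle$ is a coset of $h\T^{-1}(X,Z)+\T(Y,X)f$). In the statement $U=X$, and $h$ is viewed as a map $Z\to\Sigma X$.

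\emph{``Only if''.} Suppose $X\xrightarrow{f}Y\xrightarrow{g}Z\xrightarrow{h}\Sigma X$ is distinguished. Take the lower row of \eqref{eq - triangle Massey} to be this same triangle, so $C=Z$, $i=g$, $p=h$. Then $a=\id_Z$ and $b=\id_X$ make the diagram commute, hence $\id_X\in\langle h,g,f\rangle$.

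\emph{``If''.} Suppose $\id_X\in\langle h,g,f\rangle$. Then there are a distinguished triangle $X\xrightarrow{f}Y\xrightarrow{i}C\xrightarrow{p}\Sigma X$ and a map $a\colon C\to Z$ with $ai=g$ and $ha=p$ (the latter because $b=\id_X$). The diagram is then a morphism of candidate triangles from a distinguished triangle to ours, with components $(\id_X,\id_Y,a)$. We need to show that $a$ is an isomorphism; once it is, our triangle is isomorphic to a distinguished one, hence distinguished.

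The main obstacle is that our row is not yet known to be distinguished, so the usual 5-lemma argument for morphisms of triangles does not apply directly. I would handle this by comparing long exact sequences. Apply $\T(W,-)$ to both rows. The lower row gives a long exact sequence. The upper row gives only a complex, since $gf=0=hg$ and, using $ha=p$ and $pi=0$, also $(\Sigma f)h=(\Sigma f)ha=(\Sigma f)p=0$. The map $a_*$ is compatible with these sequences and is the identity on the $X$- and $Y$-terms.

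From this, $a_*$ is injective: if $a_*(x)=0$ then $p_*(x)=h_*a_*(x)=0$, so $x=i_*(y)$; then $g_*(y)=a_*(x)=0$, and because the top row is only a complex, exactness at $Y$ cannot be used to finish. Surjectivity of $a_*$ needs exactness of the top row at $Z$, which we also do not know. So the pure diagram chase is blocked; this is the crux.

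To get past it, I would take a different approach: in the lower diagram, replace the comparison with a direct use of the octahedral axiom or of Heller's argument \cite[\S1.3]{Heller}. Concretely, complete $a$ to a distinguished triangle $C\xrightarrow{a}Z\to W\to\Sigma C$ and show $W=0$. Using $ai=g$, $ha=p$ and the octahedral axiom applied to $Y\xrightarrow{i}C\xrightarrow{a}Z$, one gets that $W$ is a cone of $g$ modulo the cone of $i$, which is $\Sigma X$. The condition $ha=p$ identifies the induced map on cones with the identity of $\Sigma X$, which would force $W\cong 0$. Failing that, I would simply cite \cite[\S1.3]{Heller}, where this is proved, since the proposition is stated there as a review.
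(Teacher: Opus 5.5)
Your ``only if'' direction is correct. The ``if'' direction cannot be completed, because as stated it is false. The obstruction you hit in the diagram chase is genuine.

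Take any distinguished triangle $X\xrightarrow{f}Y\xrightarrow{i}C\xrightarrow{p}\Sigma X$ and any object $W\neq 0$. Let the upper row be $X\xrightarrow{f}Y\xrightarrow{g}C\oplus W\xrightarrow{h}\Sigma X$ with $g=\bigl(\begin{smallmatrix} i\\ 0\end{smallmatrix}\bigr)$ and $h=(p\ \ 0)$. Then $gf=0=hg$. The map $a=\bigl(\begin{smallmatrix} 1\\ 0\end{smallmatrix}\bigr)\colon C\to C\oplus W$ satisfies $ai=g$ and $ha=p$, so $\mathrm{id}_X\in\langle h,g,f\rangle$. But the row is not distinguished: the inclusion $W\to C\oplus W$ is killed by $h_\ast$ and is not of the form $g\phi$, so $\mathcal{T}(W,-)$ applied to the row is not exact at the middle term. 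A more degenerate case: if $X=Y=0\neq Z$, then $\langle h,g,f\rangle\subseteq\mathcal{T}(0,0)=\{\mathrm{id}_0\}$, yet $0\to 0\to Z\to 0$ is not distinguished.

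In particular your octahedral step must fail. In the split example $\operatorname{cone}(a)\cong W$ and $\operatorname{cone}(g)\cong\Sigma X\oplus W$, and the octahedral triangle $\Sigma X\to\operatorname{cone}(g)\to\operatorname{cone}(a)\to\Sigma^2 X$ is entirely compatible with $ai=g$ and $ha=p$. Asking that $\Sigma X\to\operatorname{cone}(g)$ be an isomorphism is equivalent to what you are trying to prove. The relation $ha=p$ only controls $h$ on the image of $a$, not on all of $Z$. Falling back on the citation does not help either: the paper gives no proof beyond its reference to Heller, and no proof of the statement as written can exist.

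What must be added is that the upper row is \emph{exact}: $\cdots\to\mathcal{T}(V,X)\to\mathcal{T}(V,Y)\to\mathcal{T}(V,Z)\to\mathcal{T}(V,\Sigma X)\to\cdots$ is exact for all $V$. This hypothesis is clearly necessary, since distinguished triangles are exact. With it, your first approach closes immediately:
\begin{enumerate}
\item $(\mathrm{id}_X,\mathrm{id}_Y,a,\mathrm{id}_{\Sigma X},\mathrm{id}_{\Sigma Y})$ is a commutative ladder between long exact sequences;
\item so $a_\ast\colon\mathcal{T}(V,C)\to\mathcal{T}(V,Z)$ is bijective for all $V$ by the five lemma;
\item hence $a$ is an isomorphism by Yoneda, and the upper row is isomorphic to the lower, distinguished one.
\end{enumerate}

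Two smaller remarks.
\begin{itemize}
\item Your equation $(\Sigma f)h=(\Sigma f)ha$ is not meaningful, since the two sides have different sources. In fact $(\Sigma f)h$ need not vanish: replace $h$ above by $(p\ \ k)$ for some $k\colon W\to\Sigma X$ with $(\Sigma f)k\neq 0$.
\item For the paper's purposes the missing hypothesis appears harmless. Exactness depends only on $H^\ast$ with its composition, i.e.\ on $m_1$ and $m_2$. So the conclusion that the triangulated structure is determined by $m_1,m_2,m_3$ should still follow from the corrected criterion.
\end{itemize}
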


\subsection{Massey products in \texorpdfstring{$A_\infty$}{Ainfty}-categories}

Let $\B$ be an $A_{\infty}$-category. Suppose there are composable morphisms in $H^{0}(\B)$,
\[\begin{tikzcd}
	X & Y & Z & {U,}
	\arrow["f", from=1-1, to=1-2]
	\arrow["g", from=1-2, to=1-3]
	\arrow["h", from=1-3, to=1-4]
	\arrow["{(1)}"', from=1-3, to=1-4]
\end{tikzcd}\]
satisfying $gf = 0 = hg$.

\begin{definition} \label{def - Massey Ainf}
    The \ti{set of Massey products $\langle h,g,f \rangle \subseteq H^{0}(\B(X,U))$} is the (non-empty) set of elements $b \in H^{0}(\B(X,U))$ obtained via the following procedure:
    \begin{enumerate}
        \item Choose lifts $\tilde{f},\tilde{g}$ in $Z^{0}(\B)$, and $\tilde{h}\in Z^{1}(\B)$, of $f,g,h$ respectively.
        \item Since $gf = 0$ in $H^{0}(\B)$, there is some $u \in \B(X,Z)^{-1}$ such that $$b_{1}(su) = -b_{2}(s\tilde{g},s\tilde{f}).$$ Similarly, there is some $v \in \B(Y,U)^{0}$ for which $$b_{1}(sv) =b_{2}(s\tilde{h},s\tilde{g}).$$ Note that the signs are chosen such that $m_1(u)=m_2(\tilde{g}, \tilde{f})$ and $m_1(v)=m_2(\tilde{h},\tilde{g})$, following the usual convention for Massey products. 
        \item Set $$s\tilde{b} := -b_{2}(s\tilde{h},su)  + b_{2}(sv,s\tilde{f})  -b_{3}(s\tilde{h},s\tilde{g},s\tilde{f}).$$ Then, by \zcref{lem - def of btilde}, we have $b_1(s\tilde{b})=0$.
        \item Hence $\tilde{b} \in Z^{0}(\B(X,U))$. Let $b$ be the image of $\tilde{b}$ in $H^{0}(\B(X,U))$.
    \end{enumerate}
\end{definition}

\begin{lemma}\label{lem - def of btilde}
With the notation of \zcref{def - Massey Ainf}, we have  $b_1(s\tilde{b})=0$.
\end{lemma}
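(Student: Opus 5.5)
We need to show $b_1(s\tilde b)=0$ where $s\tilde b = -b_2(s\tilde h,su)+b_2(sv,s\tilde f)-b_3(s\tilde h,s\tilde g,s\tilde f)$. The plan is to apply $b_1$ to each of the three terms and expand using the $A_\infty$-relations \zcref{eq - Ainfty b} in arities $2$ and $3$. Each term then becomes a sum of expressions in $b_1(s\tilde f)$, $b_1(s\tilde g)$, $b_1(s\tilde h)$, $b_1(su)$, $b_1(sv)$. We then substitute the defining equations of $u$ and $v$ together with the cocycle conditions $b_1(s\tilde f)=b_1(s\tilde g)=b_1(s\tilde h)=0$. The remaining terms should cancel by the associativity-up-to-homotopy relation in arity $3$.

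\textbf{Arity $2$ relation.} This is $b_1b_2+b_2(b_1\otimes 1)+b_2(1\otimes b_1)=0$, where the Koszul rule gives $(1\otimes b_1)(sx\otimes sy)=(-1)^{|sx|}sx\otimes b_1(sy)$. Applying it:
\[
b_1b_2(s\tilde h,su)=-b_2(b_1 s\tilde h,su)-(-1)^{|s\tilde h|}b_2(s\tilde h,b_1su)=-(-1)^{|s\tilde h|}b_2(s\tilde h,b_1 su).
\]
Since $\tilde h\in Z^1$, we have $|s\tilde h|=0$. Substituting $b_1(su)=-b_2(s\tilde g,s\tilde f)$ gives
\[
b_1b_2(s\tilde h,su)=b_2(s\tilde h,b_2(s\tilde g,s\tilde f)).
\]
Similarly, $b_1b_2(sv,s\tilde f)=-b_2(b_1sv,s\tilde f)=-b_2(b_2(s\tilde h,s\tilde g),s\tilde f)$, because $b_1 s\tilde f=0$.

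\textbf{Arity $3$ relation.} The relation reads
\[
b_1b_3+b_3(b_1\otimes1\otimes1+1\otimes b_1\otimes 1+1\otimes1\otimes b_1)+b_2(b_2\otimes 1)+b_2(1\otimes b_2)=0.
\]
Evaluated on the cocycles $(s\tilde h,s\tilde g,s\tilde f)$, all the $b_3(\cdots b_1\cdots)$ terms vanish. The Koszul sign in $b_2(1\otimes b_2)$ is $(-1)^{|s\tilde h|}=1$. Hence
\[
b_1b_3(s\tilde h,s\tilde g,s\tilde f)=-b_2(b_2(s\tilde h,s\tilde g),s\tilde f)-b_2(s\tilde h,b_2(s\tilde g,s\tilde f)).
\]

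\textbf{Combining.} Summing the three contributions with the signs in the definition of $s\tilde b$ gives
\[
-b_2(s\tilde h,b_2(s\tilde g,s\tilde f))-b_2(b_2(s\tilde h,s\tilde g),s\tilde f)+b_2(b_2(s\tilde h,s\tilde g),s\tilde f)+b_2(s\tilde h,b_2(s\tilde g,s\tilde f))=0,
\]
as desired.

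\textbf{Main obstacle.} The only delicate point is the sign bookkeeping. It depends on the degrees of the shifted elements, namely $|s\tilde f|=|s\tilde g|=-1$, $|s\tilde h|=0$ and $|su|=-2$, and these degrees enter only through the Koszul signs of $1\otimes b_1$ and $1\otimes b_2$. The ones that matter involve $|s\tilde h|=0$, so they are trivial. The signs attached to $u$ and $v$ in the definition are precisely those that make the cancellation work.
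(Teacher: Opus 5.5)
Your proof is correct and follows the paper's argument essentially step by step. You use the arity-two $A_\infty$-relation on the two $b_2$-terms, substitute the defining equations of $u$ and $v$, and cancel against $b_1b_3(s\tilde h,s\tilde g,s\tilde f)$ via the arity-three relation on closed inputs, making explicit the Koszul signs (trivial since $|s\tilde h|=0$) that the paper uses implicitly.
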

\begin{proof}
By the Koszul sign rule and the fact that $\tilde{f},\tilde{g},\tilde{h}$ are closed, we find that
        \begin{align*}
            &\quad b_{1}(s\tilde{b}) \\
            &= -b_{1}b_{2}(s\tilde{h},su)+b_{1}b_{2}(sv,s\tilde{f}) - b_{1}b_{3}(s\tilde{h},s\tilde{g},s\tilde{f}) \\
            &= b_{2}(b_{1} \otimes 1+1 \otimes b_{1})(s\tilde{h},su) 
            -b_{2}(b_{1} \otimes 1+1 \otimes b_{1})(sv,s\tilde{f}) 
            -b_{1}b_{3}(s\tilde{h},s\tilde{g},s\tilde{f})\\
            &= b_{2}(s\tilde{h},b_{1}(su)) -b_{2}(b_{1}(sv),s\tilde{f}) -b_{1}b_{3}(s\tilde{h},s\tilde{g},s\tilde{f})\\
            &= -b_{2}(s\tilde{h},b_{2}(s\tilde{g},s\tilde{f})) - b_{2}(b_{2}(s\tilde{h},s\tilde{g}),s\tilde{f}) - b_{1}b_{3}(s\tilde{h},s\tilde{g},s\tilde{f}) \\
            &= -((b_{2}(1 \otimes b_{2} + b_{2} \otimes 1) + b_{1}b_{3})(s\tilde{h},s\tilde{g},s\tilde{f})) \\
            &= 0. \qedhere
        \end{align*}
\end{proof}

\begin{proposition}
 The set $\langle h,g,f \rangle$ is a coset of $h H^{-1}(\B(X,Z)) + H^{0}(\B(Y,U)) f$ inside $H^{0}(\B(X,U))$. 
    \begin{proof}
        We start by showing that different choices of lifts $\tilde{f}',\tilde{g}',\tilde{h}'$ in step (1) and different choices $u',v'$ in step (2) of \zcref{def - Massey Ainf}, yield elements of the coset $b +h H^{-1}(\B(X,Z)) + H^{0}(\B(Y,U)) f$.
        
        We first let $\tilde{f}$ vary. So consider another lift $\tilde{f}'$. Then 
        $$s\tilde{f}' = s\tilde{f} + b_{1}(s\gamma_{f})$$
        for some $\gamma_{f} \in \B(X,Y)^{-1}$. Then
        \begin{align*}
            b_{2}(s\tilde{g},s\tilde{f}') &= b_{2}(s\tilde{g},s\tilde{f}) + b_{2}(s\tilde{g},b_{1}(s\gamma_{f})) \\
            &= -b_{1}(su) + b_{1}b_{2}(s\tilde{g},s\gamma_{f})
        \end{align*}
        by \zcref{eq - Ainfty b} and the fact that $\tilde{g}$ is closed. Setting $su' := su + b_{2}(s\tilde{g},s\gamma_{f})$, we find
        \begin{align*}
            s\tilde{b}' :&= -b_{2}(s\tilde{h},u') + b_{2}(sv,s\tilde{f}') - b_{3}(s\tilde{h},s\tilde{g},s\tilde{f}') \\
            &= s\tilde{b} - b_{2}(s\tilde{h},b_{2}(s\tilde{g},s\gamma_{f})) + b_{2}(sv,b_{1}(s\gamma_{f})) - b_{3}(s\tilde{h},s\tilde{g},b_{1}(s\gamma_{f})) \\
            &= s\tilde{b} + b_{2}(1 \otimes b_{2})(s\tilde{g},s\gamma_{f}) +b_{2}(1 \otimes b_{1})(su,s\gamma_{f}) + b_{3}(1 \otimes 1 \otimes b_{1})(s\tilde{h},s\tilde{g},s\gamma_{f}) \\
            &= s\tilde{b} - b_{1}b_{2}(su,s\gamma_{f}) - b_{1}b_{3}(s\tilde{h},s\tilde{g},s\gamma_{f}).
        \end{align*}
      We conclude that $\tilde{b}' \equiv \tilde{b}$ in $H^{0}(\B( X,U))$. Similar arguments treat the case of varying the lifts $\tilde{g}$ and $\tilde{h}$. If we now consider other choices $u' \in \B(X,Z)^{-1}$ and $v' \in \B(Y,U)^{0}$ such that $b_{1}(su') = -b_{2}(s\tilde{g},s\tilde{f})$ and $b_{1}(sv') = b_{2}(s\tilde{h},s\tilde{g})$, then
        \begin{align*}
            u' &= u + \xi_{u}, \\
            v' &= v + \xi_{v},
        \end{align*}
        for some $\xi_{u} \in Z^{-1}(\B(X,Z))$ and $\xi_{v} \in Z^{0}(\B(Y,U))$.
        Consequently,
        \begin{align*}
            s\tilde{b}' :=& -b_{2}(s\tilde{h},su') + b_{2}(sv',s\tilde{f}) - b_{3}(s\tilde{h},s\tilde{g},s\tilde{f}) \\
            =& \textcolor{white}{i} \tilde{b} - b_{2}(s\tilde{h},s\xi_{u}) + b_{2}(s\xi_{v},s\tilde{f}),
        \end{align*}
 and its image in $H^{0}(\B(X,U))$ is contained in the coset $b + h H^{-1}(\B(X,Z)) + H^{0}(\B(Y,U)) f$.

        Conversely, consider an element of the coset:
        \begin{equation} \label{eq - coset}
            b + h\alpha + \beta f.
        \end{equation}
        Then we can lift this to
        $$s\tilde{b} - b_{2}(s\tilde{h},s\tilde{\alpha}) + b_{2}(s\tilde{\beta},s\tilde{f})$$
        with $\tilde{\alpha} \in Z^{-1}(\B(X,Z))$ and $\tilde{\beta} \in Z^{0}(\B(Y,U))$. Since $\tilde{\alpha}$ and $\tilde{\beta}$ are closed, we can take $u' := u + \tilde{\alpha}$ and $v' := v + \tilde{\beta}$ in step (2) of \zcref{def - Massey Ainf}. The resulting Massey product then equals \zcref{eq - coset}.
    \end{proof}
\end{proposition}

\begin{lemma} \label{lemma - Massey Ainfty}
    The Massey products on $H^{0}(\B)$ defined in this subsection coincide with the Massey products that were discussed in \zcref{sec:massey_triang}, computed
in $H^0(\Tw \B)$.
    \begin{proof}
Let us consider the diagram of \zcref{def - Massey Ainf}, together with the maps $u$ and $v$ from the same definition. To compute its Massey product using the method of \zcref{def - Massey triang}, we can use the diagram in $\Tw \B$:
        \begin{equation} \label{eq - triangle compatibility} \begin{tikzcd}
	X & Y & Z & U \\
	X & Y & {C(f)} & { X,}
	\arrow["\tilde{f}", from=1-1, to=1-2]
	\arrow[equals, from=1-1, to=2-1]
	\arrow["\tilde{g}", from=1-2, to=1-3]
	\arrow["\tilde{h}", from=1-3, to=1-4]
	\arrow["{(1)}"', from=1-3, to=1-4]
	\arrow["\tilde{f}", from=2-1, to=2-2]
	\arrow[equals, from=2-2, to=1-2]
	\arrow["i", from=2-2, to=2-3]
	\arrow["p", from=2-3, to=2-4]
	\arrow["u", from=1-1, to=1-3, bend left]
	\arrow["v", from=1-2, to=1-4, bend left]
\end{tikzcd}\end{equation}
        where the distinguished triangle in the lower row is the standard distinguished triangle of \zcref{def - cone}. So we have
        $$C(f) = (\begin{pmatrix} \Sigma X \\ Y \end{pmatrix},\delta)$$
        with Maurer-Cartan element
        $$\delta = \begin{pmatrix}
            0 & 0 \\ \sigma^{-1}\tilde{f} & 0 
        \end{pmatrix},$$
        and the maps
        $$i = \begin{pmatrix}
            0 \\ 1
        \end{pmatrix} \quad \tn{ and } \quad p = \begin{pmatrix}
            \sigma^{-1}1 & 0
        \end{pmatrix}.$$
 
       To complete \zcref{eq - triangle compatibility} to a commutative diagram, we go in steps:

        \begin{enumerate}
            %a
            \item \tb{We construct $a$.} Define
        $$\tilde{a} := \begin{pmatrix} -\sigma^{-1}u & \tilde{g} \end{pmatrix}: C(f) \to Z,$$
	where $u$ is defined in part (2) of \zcref{def - Massey Ainf}. Then $\tilde{a}$ is closed because
            \begin{align*}
                b_{\Tw \B,1}(s\tilde{a}) &= b_{1}(s\tilde{a}) + b_{2}(s\tilde{a},s\delta) \\
                &= (b_{1}(-s\sigma^{-1}u) \quad b_{1}(s\tilde{g})) + (b_{2}(s\tilde{g},s(\sigma^{-1}\tilde{f})) \quad 0) \\
                &= (-\sigma^{-1}b_{1}(su) \quad 0) + (-\sigma^{-1}b_{2}(s\tilde{g},s\tilde{f}) \quad 0) \\
                &= (-\sigma^{-1}(b_{1}(su) +b_{2}(s\tilde{g},s\tilde{f})) \quad 0) \\
                &= (0 \quad 0),
            \end{align*}
	where the notation $b_i$ is short for $b_{\tn{Free}(\B),i}$. Let $a$ denote the image of $\tilde{a}$ in $H^{0}(\Tw \B)(C(f),Z)$. Then $a$ makes the diagram below commute:
            \[\begin{tikzcd}
	X & Y & Z & U \\
	X & Y & {C(f)} & {X.}
	\arrow["f", from=1-1, to=1-2]
	\arrow[equals, from=1-1, to=2-1]
	\arrow["g", from=1-2, to=1-3]
	\arrow["h", from=1-3, to=1-4]
	\arrow["f", from=2-1, to=2-2]
	\arrow[equals, from=2-2, to=1-2]
	\arrow["i", from=2-2, to=2-3]
	\arrow["a", from=2-3, to=1-3]
	\arrow["p", from=2-3, to=2-4]
	\arrow["u", from=1-1, to=1-3, bend left]
	\arrow["v", from=1-2, to=1-4, bend left]
\end{tikzcd}\]
            Indeed, on the one hand
            $$b_{\Tw \B,2}(s\tilde{g},s1) = s\tilde{g},$$
            while
            \begin{align*}
                b_{\Tw \B,2}&(s\tilde{a},si) = b_{2}(s\tilde{a},si) + b_{3}(s\tilde{a},s\delta,si) \\
                &= b_{2}(s(-\sigma^{-1}u \quad \tilde{g}),s(\begin{pmatrix}
                    0 \\ 1
                \end{pmatrix})) + b_{3}(s(-\sigma^{-1}u \quad \tilde{g}),s(\begin{pmatrix}
                    0 & 0 \\ \sigma^{-1}\tilde{f} & 0
                \end{pmatrix}),s(\begin{pmatrix}
                    0 \\ 1
                \end{pmatrix})) \\
                &= b_{2}(s\tilde{g},s1) \\
                &= s\tilde{g},
            \end{align*}
            where we used the fact that $b_{\tn{Free}(\B)^{\oplus 2},3}$ decomposes into expressions of $b_{\B,3}$ in which every term has a zero component.

        %b
        \item \tb{We construct $b$.} We now define $b$ following \zcref{def - Massey Ainf}, i.e. set
\[s\tilde{b} := -b_{2}(s\tilde{h},su)  + b_{2}(sv,s\tilde{f})  -b_{3}(s\tilde{h},s\tilde{g},s\tilde{f}) \]
     and define $b$ as its image in $H^{0}(\Tw \B)(X,U)$.
        We also define
        $$q:= (0, v) \in \Tw \B (C(f),U)^{0}.$$
        This latter map will be the homotopy that makes the rightmost square in the diagram below commute:
        \begin{equation} \label{eq - final diagram} \begin{tikzcd}
	X & Y & Z & U \\
	X & Y & {C(f)} & {\Sigma X.}
	\arrow["f", from=1-1, to=1-2]
	\arrow[equals, from=1-1, to=2-1]
	\arrow["g", from=1-2, to=1-3]
	\arrow["h", from=1-3, to=1-4]
	\arrow["f"', from=2-1, to=2-2]
	\arrow[equals, from=2-2, to=1-2]
	\arrow["i"', from=2-2, to=2-3]
	\arrow["a", from=2-3, to=1-3]
	\arrow["q", dashed, from=2-3, to=1-4]
	\arrow["p"', from=2-3, to=2-4]
	\arrow["b"', from=2-4, to=1-4]
\end{tikzcd}\end{equation}

 We already showed in \zcref{lem - def of btilde} that $\tilde{b}$ is closed. There remains to be shown that \zcref{eq - final diagram} commutes. On the one hand, we find
        \begin{align*}
            \quad b_{\Tw \B,2}(s\tilde{h},s\tilde{a}) 
            &= b_{2}(s\tilde{h},s\tilde{a}) + b_{3}(s\tilde{h},s\tilde{a},s\delta) \\
            &= b_{2}(s\tilde{h},s(-\sigma^{-1}u \quad \tilde{g}))) + b_{3}(s\tilde{h},s(-\sigma^{-1}u \quad \tilde{g}),s\begin{pmatrix}
                0 & 0 \\ \sigma^{-1}\tilde{f} & 0
            \end{pmatrix}) \\
            &= (b_{2}(s\tilde{h},\sigma^{-1}(su)) \quad b_{2}(s\tilde{h},s\tilde{g})) + (b_{3}(s\tilde{h}, s\tilde{g}, s\sigma^{-1}\tilde{f}) \quad 0) \\ 
            &= (-\sigma^{-1} b_{2}(s\tilde{h},su)- \sigma^{-1} b_{3}(s\tilde{h},s\tilde{g},s\tilde{f})) \quad b_{1}(sv)).
        \end{align*}
On the other hand, we have
        \begin{align*}
            b_{\Tw \B,2}&(s\tilde{b},sp) 
            = b_{2}(s\tilde{b},sp) + b_{3}(s\tilde{b},sp,s\delta) \\
	&=b_2(s\tilde{b},s(\sigma^{-1}1 \quad 0))+ b_{3}(s\tilde{b},s\begin{pmatrix}
                \sigma^{-1}1 & 0
            \end{pmatrix},s \begin{pmatrix}
                0 & 0 \\ \sigma^{-1}\tilde{f} & 0
            \end{pmatrix}) \\
	&=(-b_2(s\tilde{b},\sigma^{-1}s1) \quad 0) \\
	&=(-\sigma^{-1}b_2(s\tilde{b},s1) \quad 0) \\
	&= (-\sigma^{-1}s\tilde{b} \quad 0)\\
	&=(\sigma^{-1}b_{2}(s\tilde{h},su) - \sigma^{-1} b_{2}(sv,s\tilde{f})  +\sigma^{-1}b_{3}(s\tilde{h},s\tilde{g},s\tilde{f}) \quad 0)
        \end{align*}
        since $b_{2}(sx,s1) = (-1)^{\vert x \vert} sx$ (as before, the second term vanishes because $b_{\tn{Free}(\B)^{\oplus 2},3}$ decomposes into expressions of $b_{\B,3}$ in which every term has a zero component).

And finally, 
\begin{align*}
            b_{\Tw \B,1}(sq) &= b_{1}(sq) + b_{2}(sq,s\delta) \\
            &=b_{1}(s(0 \quad v)) + b_{2}(s( 0 \quad v), s\begin{pmatrix}
                0 & 0 \\ \sigma^{-1}\tilde{f} & 0
            \end{pmatrix}) \\
            &=(0 \quad b_{1}(sv)) + (b_{2}(sv,s\sigma^{-1}\tilde{f}) \quad 0) \\
            &= (b_{2}(sv,s\sigma^{-1}\tilde{f}) \quad b_{1}(sv)) \\
            &= (-\sigma^{-1}b_{2}(sv,s\tilde{f}) \quad b_{1}(sv)).
\end{align*}

Putting this all together, we have that 
\begin{align*}
b_{\Tw \B,2}(s\tilde{b},sp) +  b_{\Tw \B,2}(s\tilde{h},s\tilde{a}) &=
(-\sigma^{-1}b_2(sv,s\tilde{f}) \quad b_1(sv) \\
&=b_{\Tw \B,1}(sq).
\end{align*}
This yields commutativity of the rightmost square: Indeed, the two summands are taken with a plus sign so that they commute on the $m$-level using \zcref{eq - b2 to m2} (this is because $\tilde{h}$ has degree 1). \qedhere
        \end{enumerate}
    \end{proof}
\end{lemma}

Together with \zcref{prop - Classic Massey}, we can conclude from this that the Massey products of \zcref{def - Massey Ainf} allow us to detect the distinguished triangles in $H^{0}(\B)$ (without any reference to $H^{0}(\Tw \B)$). This proves \zcref{prop - Massey Ainfty}.

% ----------------------------------------------------------------------------------------
% BIBLIOGRAPHY

\medskip
\printbibliography

\end{document}